\def\C{\protect\operatorname{Conf}}
\def\UC{\protect\operatorname{UConf}}
\newtheorem{proposition}{Proposition}[section]
\newtheorem{corollary}[proposition]{Corollary}
\newtheorem{definition}[proposition]{Definition}
\newtheorem{theorem}[proposition]{Theorem}
\newtheorem{remark}[proposition]{Remark}
\newtheorem{example}[proposition]{Example}
\newtheorem{lemma}[proposition]{Lemma}
\begin{document}

\title{Borsuk--Ulam property for graphs II: The $\mathbb{Z}_n$-action}

\author{Daciberg Lima Gon\c{c}alves and Jes\'us Gonz\'alez}

\date{\today}

\maketitle

\begin{abstract}
For a finite group $H$ and connected topological spaces $X$ and $Y$ such that $X$  is  endowed with a free left $H$-action  $\tau$, we provide a geometric condition in terms of the existence of a commutative diagram of spaces (arising from the triple $(X,Y;\tau)$) to decide whether the Borsuk--Ulam property holds for based homotopy classes $\alpha\in[X,Y]_0$, as well as for free homotopy classes $\alpha\in[X,Y]$. Here, a homotopy class $\alpha$ is said to satisfy the Borsuk--Ulam property if, for each of its representatives $f\in\alpha$, there exists an $H$-orbit where $f$ fails to be injective. Our geometric characterization is attained  by constructing an $H$-equivariant map from $X$ to the classical configuration space $F_{|H|}(Y)$. We derive an algebraic condition from the geometric characterisation, and show that former one is in fact equivalent to the latter one when $X$ and $Y$ are aspherical. We then specialize   to the 1-dimensional case, i.e., when $X$ is an arbitrary connected graph, $H$ is cyclic, and $Y$ is either an interval, a circle, or their wedge sum. The graph-braid-group ingredient in our characterizations is then effectively controlled through the use of discrete Morse theory. 
\end{abstract}

{\small 2020 Mathematics Subject Classification: Primary: 55M20, 57Q70. Secondary: 20F36, 55R80, 57S25.}

{\small Keywords and phrases: Finite free actions, Borsuk--Ulam property, graph braid groups, aspherical spaces, discrete Morse theory.}

\section{Introduction and main result}\label{intro}
In its classical formulation, the Borsuk--Ulam Theorem asserts that, for any continuous map
\begin{equation}\label{classicaln}
f\colon S^n\to \mathbb{R}^n,
\end{equation}
there is a point $x\in S^n$ so that both $x$ and its antipodal $-x$ have the same image under $f$. Such a phenomenon has been intensively studied in the last 15 years within generalized contexts, namely, for maps $f\colon M\to N$ between spaces $M$ and $N$, where $M$ admits a free involution. For instance, the case where $M$ ranges over surfaces or suitable families of 3-manifolds is now reasonably well understood \cite{MR3614297,BGH1,MRBGH,BauHaGoZv,MR3619753,MR4431413,MR2209795,MR2840097}. The case where $N$ has non-trivial homotopy information leads to a more refined problem, as the Borsuk--Ulam question can then have different answers for different homotopy classes\footnote{Unless otherwise noted, spaces are assumed to come equipped with base points which must be preserved by maps between spaces. Likewise, homotopy classes are meant in the based sense.} in $[M,N]$, see \cite{MR3947929, MR4235703, MR01}.

\begin{definition}[\cite{MR3947929}]
Assume $M$ admits a free involution $\tau$. We say that the Borsuk--Ulam property holds for a homotopy class $\alpha\in[M,N]$ if for every representative $f\in\alpha$ there is a point $x\in M$ such that $f(x)=f(\tau\cdot x)$. If the above condition holds for all homotopy classes in $[M,N]$, we say that the triple $(M,\tau,N)$ satisfies the Borsuk--Ulam property.
\end{definition}

Very recently a  complete answer to the Borsuk--Ulam problem  for homotopy  classes of maps 
between two  finite connected graphs $\Gamma$ and~$G$ 
with respect to any   free involution $\tau$ on $\Gamma$ was given in \cite{GG}.


\begin{remark}\label{notaninterval}{\em
In the classical situation~(\ref{classicaln}) with $n=1$, the circle plays no essential role. Indeed, by considering the differences $f(x)-f(\tau\cdot x)$, it can be seen that any map $f\colon \Gamma\to \mathbb{R}$ satisfies the Borsuk--Ulam property. This is also the case if we replace $\mathbb{Z}_2$ by an arbitrary finite group $H$ acting freely on $\Gamma$ (cf. Section \ref{ciclosyarboles}). However, such a pleasant situation changes drastically when $\mathbb{R}$ (or an interval, for that matter) is replaced by a more general graph~$G$ which throughout this introductory section will be assumed not to be homeomorphic to an interval. In particular the configuration spaces 
 $\C_n(G)$ and $\UC_n(G)$ consisting respectively of $n$-tuples $(x_1, \cdots, x_n)$ and of subsets $\{x_1,\cdots, x_n\}$ with $x_i\neq x_j$ for  $i\ne j$, where $n=|H|\geq 2$ (the cardinality of $H$), are both connected.
}\end{remark}

The Borsuk--Ulam property for $(\Gamma,\tau,G)$ as above (with $\tau$ an involution) is described  in \cite{GG}  by the following results. 
\begin{theorem}\label{maintheorem}
If $G$ is not homeomorphic to a circle or to an interval, then the Borsuk--Ulam property fails for all homotopy classes in $[\Gamma,G]$, i.e., for every $\alpha\in[\Gamma,G]$ there is a representative $f\in\alpha$ satisfying $f(x)\neq f(\tau\cdot x)$ for all $x\in\Gamma$.
\end{theorem}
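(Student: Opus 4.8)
The plan is to translate the \emph{failure} of the Borsuk--Ulam property into the existence of an equivariant map, and then to solve the resulting problem algebraically, exploiting that every space in sight is either an aspherical graph or an aspherical graph configuration space.

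First I would reformulate the conclusion. A representative $f\in\alpha$ fails the Borsuk--Ulam property precisely when the map $\widehat f=(f,f\circ\tau)\colon\Gamma\to G\times G$ misses the diagonal, i.e.\ factors through the ordered configuration space $\C_2(G)$. Moreover $\widehat f$ is $\mathbb Z_2$-equivariant for $\tau$ on $\Gamma$ and the coordinate swap on $\C_2(G)$, and $f=\mathrm{pr}_1\circ\widehat f$. Conversely, any $\mathbb Z_2$-equivariant $\varphi\colon\Gamma\to\C_2(G)$ yields, via $f':=\mathrm{pr}_1\circ\varphi$, a representative with $f'(x)\ne f'(\tau x)$ for all $x$. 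Hence the theorem is equivalent to the statement that every class $\alpha\in[\Gamma,G]$ is realized as $\mathrm{pr}_1\circ\varphi$ for some equivariant $\varphi$; passing to the free quotients $\bar\Gamma=\Gamma/\tau$ and $\UC_2(G)=\C_2(G)/\mathbb Z_2$, this amounts to building a map $\bar\varphi\colon\bar\Gamma\to\UC_2(G)$ whose monodromy double cover is $\Gamma\to\bar\Gamma$ and whose first strand carries $\alpha$.

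Next I would make this algebraic. Since $G$ is not homeomorphic to an interval it has a vertex of degree $\ge 3$, so by Remark \ref{notaninterval} both $\C_2(G)$ and $\UC_2(G)$ are connected; being configuration spaces of a graph they are aspherical, as are $\Gamma$ and $\bar\Gamma$. Consequently $\bar\varphi$ is determined up to homotopy by a homomorphism $\Psi\colon\pi_1\bar\Gamma\to B_2(G)=\pi_1\UC_2(G)$, and the two constraints become: (i) $w\circ\Psi=\omega$, where $w\colon B_2(G)\to\mathbb Z_2$ records the strand permutation and $\omega\colon\pi_1\bar\Gamma\to\mathbb Z_2$ classifies the cover $\Gamma\to\bar\Gamma$; and (ii) $(\mathrm{pr}_1)_*\circ\Psi|_{\ker\omega}=\alpha_*$, where $(\mathrm{pr}_1)_*\colon P_2(G)=\pi_1\C_2(G)\to\pi_1 G$ is the forget-the-second-strand homomorphism and $\ker\omega=\pi_1\Gamma$. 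The whole problem thus reduces to producing $\Psi$ realizing a prescribed first-strand behaviour on an index-two subgroup together with a prescribed permutation monodromy.

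I would then solve this using freeness together with the branch vertex. Because $\pi_1\bar\Gamma$ and $\pi_1\Gamma$ are free, and because the branch at the essential vertex lets the second strand always dodge the first, the forget map $(\mathrm{pr}_1)_*\colon P_2(G)\to\pi_1 G$ is surjective; projectivity of free groups then lifts $\alpha_*$ to a homomorphism on $\ker\omega$. To upgrade this to a $\Psi$ on all of $\pi_1\bar\Gamma$ with $w\circ\Psi=\omega$, I would use the same branch to exhibit a half-swap braid in $B_2(G)$ mapping to the generator of $\mathbb Z_2$ under $w$, and assign it to the generators of $\pi_1\bar\Gamma$ lying outside $\ker\omega$. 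The main obstacle is exactly this last step: arranging (i) and (ii) simultaneously, that is, controlling the first-strand loops and the swap monodromy \emph{independently}. This is where the hypotheses bite --- on a circle the configuration space is too small to decouple degree from swapping, and on an interval $\C_2(G)$ is even disconnected --- so the essential vertex, which supplies the room to both park/dodge the second strand and interchange the two strands, is what makes the decoupling, and hence the construction, possible.
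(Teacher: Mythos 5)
Your reduction is sound, but it is not new relative to this paper: translating the failure of the Borsuk--Ulam property into an equivariant map $\Gamma\to\C_2(G)$, passing to quotients, and converting everything into the purely algebraic problem of finding $\Psi\colon\pi_1(\Gamma/\tau)\to B_2(G)$ with $w\circ\Psi=\omega$ and prescribed first-strand behaviour on $\ker\omega$ is precisely Lemmas \ref{equivemb} and \ref{reciproco} together with Theorem \ref{th:borsuk_braidI1} and Corollary \ref{th:borsuk_braidIcoro}, specialized to $H=\mathbb{Z}_2$. (Note, incidentally, that Theorem \ref{maintheorem} is quoted here from \cite{GG}; the present paper proves only its $\mathbb{Z}_n$ analogues, Theorems \ref{thirdmainintro}--\ref{maintheoremH}, so the fair comparison is with the machinery of Sections \ref{caracterizacion} and \ref{swedgei}.)

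The genuine gap is in your final paragraph, which is where the entire content of the theorem lives. There is first a quantifier-order problem: you propose to lift $\alpha_\#$ to a homomorphism on $\ker\omega$ using freeness, and then to ``upgrade'' it to $\Psi$ on all of $\pi_1(\Gamma/\tau)$. But a homomorphism defined on an index-two subgroup of a free group does not in general extend to the ambient group; the correct procedure is the reverse one --- choose $\Psi$ on a free basis $y_1,\ldots,y_r$ of $\pi_1(\Gamma/\tau)$ (normalized as in Lemma \ref{th:Reibase}, say $\omega(y_1)=1$ and $\omega(y_i)=0$ for $i\ge2$) and then compute its restriction on the basis $y_1^2,\,y_i,\,y_1y_iy_1^{-1}$ of $\ker\omega$ provided by Lemma \ref{generatorsviaranks}. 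Writing $\beta=\Psi(y_1)$ for your half-swap braid, condition (ii) then becomes the system $(\mathrm{pr}_1)_*(\beta^2)=\alpha_\#(y_1^2)$, $(\mathrm{pr}_1)_*(\Psi(y_i))=\alpha_\#(y_i)$, $(\mathrm{pr}_1)_*(\beta\Psi(y_i)\beta^{-1})=\alpha_\#(y_1y_iy_1^{-1})$. Only the middle family follows from surjectivity of $(\mathrm{pr}_1)_*$ plus freeness. The first requires a swap braid whose \emph{square} has prescribed first-strand image; the third involves $(\mathrm{pr}_1)_*(\beta x\beta^{-1})$, which is governed by the \emph{second}-strand projection of $x$, so you actually need pure braids with both strand projections prescribed independently, together with control of the conjugation action of $\beta$. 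Asserting that ``the essential vertex supplies the room'' to decouple these constraints is a plausibility claim, not an argument --- and it is exactly the step on which the paper's own analogous proofs spend all of Section \ref{swedgei}: the Farley--Sabalka field, the explicit generators of Corollary \ref{bases}, the inclusion formulas of Propositions \ref{groupmonoinmostofthecases}--\ref{variacionparatipo4}, the computations of $\theta$ (Proposition \ref{theta}) and of $p_1$ (Proposition \ref{proyeccion1}), and the final tuning $(zw_1^\ell)^m$ with $\ell=k-j$ that makes the top triangle of (\ref{final}) commute. A complete proof of Theorem \ref{maintheorem} requires a comparable concrete analysis of $B_2(G)$ for an arbitrary $G$ with an essential vertex (this is what \cite{GG} carries out); your outline stops exactly where that difficulty starts.
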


 When $G$ is a circle, the behavior of the Borsuk--Ulam property sits in between Remark~\ref{notaninterval} and Theorem~\ref{maintheorem}. 
 
\begin{theorem}\label{maintheoremII} If $G$ is homeomorphic to a circle $S^1$, then the Borsuk--Ulam property holds for most
of the homotopy classes in $[\Gamma, S^1]$. Explicitly, the Euler characteristic of $\Gamma$ must be even and non-positive, say $\chi(\Gamma)=-2m$, $m\geq0$, so that $[\Gamma, S^1]$ can be identified with $\mathbb{Z}^{2m+1}$. Then, for a suitable such identification,
the homotopy classes of maps $\Gamma \to S^1$ for which the Borsuk--Ulam property fails are precisely the
$(2m + 1)$-tuples $(p, p_1, p_1, p_2, p_2,\ldots, p_m, p_m)$ with $p$ odd (and $p_1,\ldots, p_m$ arbitrary).
\end{theorem}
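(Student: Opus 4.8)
The plan is to specialise the paper's equivariant-map characterisation to $H=\mathbb{Z}_2$ and $Y=S^1$, and then convert the resulting geometric condition into an \emph{integral} cohomological one that can be read off a basis of $H_1(\Gamma;\mathbb{Z})$ adapted to $\tau$. I begin with the bookkeeping. Since $S^1=K(\mathbb{Z},1)$ and $\Gamma$ is a graph, $[\Gamma,S^1]\cong H^1(\Gamma;\mathbb{Z})\cong\mathbb{Z}^{b_1(\Gamma)}$ with $b_1(\Gamma)=1-\chi(\Gamma)$. Writing $\bar\Gamma=\Gamma/\tau$ and $\pi\colon\Gamma\to\bar\Gamma$ for the double cover, multiplicativity of the Euler characteristic gives $\chi(\Gamma)=2\chi(\bar\Gamma)$, so $\chi(\Gamma)$ is even; as the free involution makes the cover connected and nontrivial, there is a surjection $\pi_1\bar\Gamma\to\mathbb{Z}_2$, whence $\pi_1\bar\Gamma\neq1$, $\chi(\bar\Gamma)\leq0$, and $\chi(\Gamma)=-2m\leq0$. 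Thus $b_1(\Gamma)=2m+1$ and $[\Gamma,S^1]\cong\mathbb{Z}^{2m+1}$, as asserted.

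For the engine, note that since $S^1$ is a topological group the based and free classes agree, and for $f\in\alpha$ the difference $g=f\cdot(f\circ\tau)^{-1}$ represents $(1-\tau^*)\alpha$. A map $\Gamma\to S^1$ is surjective whenever it is essential (otherwise it factors through a contractible arc), so if $\tau^*\alpha\neq\alpha$ then $g$ is essential for every representative, hence meets the identity, and $f(x)=f(\tau\cdot x)$ always has a solution; thus the Borsuk--Ulam property holds. For the $\tau$-invariant classes I invoke the equivariant characterisation: $\alpha$ \emph{fails} the property iff some $f\in\alpha$ has $f(x)\neq f(\tau\cdot x)$ for all $x$, i.e.\ iff $x\mapsto(f(x),f(\tau\cdot x))$ is a $\tau$-equivariant map $F\colon\Gamma\to\C_2(S^1)$ for the swap action on the target whose first coordinate lies in $\alpha$; conversely any such $F$ equals $(F_1,F_1\circ\tau)$ and its first coordinate $F_1$ is a witness.

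Now I translate. As both actions are free, equivariant maps $\Gamma\to\C_2(S^1)$ correspond to maps $\bar h\colon\bar\Gamma\to\UC_2(S^1)$ pulling the orientation double cover of $\UC_2(S^1)$ back to $\pi$. Here $\UC_2(S^1)$ is an open M\"obius band (so $\simeq S^1$), $\C_2(S^1)\to\UC_2(S^1)$ is its connected degree-$2$ (orientation) double cover, and the symmetric sum map $\bar s\colon\UC_2(S^1)\to S^1$, $\{u,v\}\mapsto u+v$, is a homotopy equivalence. Writing $\beta=[\bar s\circ\bar h]\in H^1(\bar\Gamma;\mathbb{Z})$ and $w\in H^1(\bar\Gamma;\mathbb{Z}_2)$ for the class of $\pi$, the lifting condition reads $\beta\equiv w\pmod2$, while pulling the sum map up to $\Gamma$ gives $(1+\tau^*)\alpha=2\alpha=\pi^*\beta$. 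Conversely, given $\beta$ with $\pi^*\beta=2\alpha$ and $\beta\equiv w$, the induced equivariant $F$ automatically has $[F_1]=\alpha$ because $H^1(\Gamma;\mathbb{Z})$ is torsion-free. Hence $\alpha$ fails the Borsuk--Ulam property iff $\tau^*\alpha=\alpha$ and there is $\beta\in H^1(\bar\Gamma;\mathbb{Z})$ with $\pi^*\beta=2\alpha$ and $\beta\equiv w\pmod2$.

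The decisive structural input is that, as a $\mathbb{Z}[\mathbb{Z}_2]$-module, $H_1(\Gamma;\mathbb{Z})\cong\mathbb{Z}_+\oplus\mathbb{Z}[\mathbb{Z}_2]^{m}$: one $\tau$-fixed generator together with $m$ swapped pairs and \emph{no} sign summand $\mathbb{Z}_-$. The no-sign statement is the crux, and I would prove it by Smith theory: $C_*(\Gamma;\mathbb{F}_2)$ is a free $\mathbb{F}_2[\mathbb{Z}_2]$-complex, and in the Borel spectral sequence converging to the finite-dimensional $H^*(\bar\Gamma;\mathbb{F}_2)$ every trivial-module summand of $H^*(\Gamma;\mathbb{F}_2)$ spawns an infinite tower that must be cancelled by a $d_2$; since $\Gamma$ has cohomology only in degrees $0,1$, the single tower from $H^0$ can cancel exactly one degree-$1$ tower, forcing a unique trivial summand in $H_1$. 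Together with the rank count $\operatorname{rk}H^1(\Gamma;\mathbb{Q})^{\tau}=b_1(\bar\Gamma)=m+1$ this pins the module down. Choosing a basis $\bar c_0,\dots,\bar c_m$ of $H_1(\bar\Gamma;\mathbb{Z})$ with $w$ dual to $\bar c_0$ mod $2$, lifting to a geometric $\mathbb{Z}$-basis $\tilde c_0,c_1,c_1',\dots,c_m,c_m'$ of $H_1(\Gamma;\mathbb{Z})$ (so $\pi_*\tilde c_0=2\bar c_0$, $\pi_*c_i=\pi_*c_i'=\bar c_i$, $\tau_*\tilde c_0=\tilde c_0$, $\tau_*c_i=c_i'$) and dualising, the invariant classes are exactly the tuples $(p,p_1,p_1,\dots,p_m,p_m)$, and the equation $\pi^*\beta=2\alpha$ with $\beta\equiv w$ is solvable iff the $\tilde c_0$-coordinate $p$ is odd. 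This is precisely the stated description. The main obstacle I anticipate is exactly this last step: producing an honest \emph{integral} basis realising the module normal form and tracking $w$ through $\pi^*$, rather than working only rationally. Such integral control of $H_1$ of the cover is where an explicit graph model and discrete Morse theory enter.
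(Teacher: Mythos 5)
Your route is genuinely different from the paper's. The paper obtains this statement as the $n=2$ case of Theorem \ref{thirdmain}(c), working entirely at the level of fundamental groups: it feeds the diagram (\ref{techniqueVIII}) with the structure of a connected component of $F_2(S^1)$ (core circle, with the quotient map inducing multiplication by $2$ on $\pi_1$) and then enumerates all admissible homomorphisms $\psi\colon\pi_1(\Gamma/\tau)\to\mathbb{Z}$ using the basis of Lemma \ref{th:Reibase} adapted to $\theta_\tau$ together with the Reidemeister--Schreier basis of $\pi_1(\Gamma)$ it induces (Lemma \ref{generatorsviaranks}). You instead dispose of the classes with $\tau^*\alpha\neq\alpha$ by the elementary difference-map argument (correct, and a nice shortcut not in the paper), and convert the equivariant-map criterion into a mod-$2$ lifting problem over the M\"obius band $D_2(S^1)$, to be solved via the $\mathbb{Z}[\mathbb{Z}_2]$-module structure of $H_1(\Gamma;\mathbb{Z})$. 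Your intermediate characterization (failure iff $\tau^*\alpha=\alpha$ and there is $\beta$ with $\pi^*\beta=2\alpha$, $\beta\equiv w \bmod 2$) is correct, and your spectral-sequence-plus-rank-count derivation of $H_1(\Gamma;\mathbb{Z})\cong\mathbb{Z}_+\oplus\mathbb{Z}[\mathbb{Z}_2]^m$ does work: with $a,b,c$ the multiplicities of $\mathbb{Z}_+$, $\mathbb{Z}_-$, $\mathbb{Z}[\mathbb{Z}_2]$, the Borel spectral sequence forces $a+b=1$, and the counts $a+c=m+1$, $a+b+2c=2m+1$ then give $(a,b,c)=(1,0,m)$. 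One repair is needed where you assert that the lift automatically has $[F_1]=\alpha$ ``because $H^1(\Gamma;\mathbb{Z})$ is torsion-free'': by itself this is insufficient, since $1+\tau^*$ has kernel of rank $m$; you must first note that $[F_1]$ is itself a failing class, hence $\tau$-invariant by your opening argument, whence $2[F_1]=(1+\tau^*)[F_1]=\pi^*\beta=2\alpha$ and torsion-freeness finishes. The ingredients are in your text, but the deduction must be made in this order.

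The genuine gap is the one you flag at the end: the abstract module isomorphism does not by itself yield a basis simultaneously adapted to $\tau_*$, to $\pi_*$, and to $w$. Concretely, you need $\pi_*\tilde c_0$ to be divisible by $2$ in $H_1(\bar\Gamma;\mathbb{Z})$, so that $\bar c_0:=\tfrac{1}{2}\pi_*\tilde c_0$ exists, completes $\bar c_1,\ldots,\bar c_m$ to a basis, and is dual to $w$ mod $2$; this is exactly what makes ``$p$ odd'' come out, and nothing you wrote excludes, say, $\pi_*\tilde c_0=2v_0+v_1$ in some basis $v_i$ of $H_1(\bar\Gamma;\mathbb{Z})$. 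The divisibility can be proved within your framework (e.g.\ via the mod-$2$ Gysin/transfer sequence of the double cover: $\ker(\pi_*\bmod 2)=\operatorname{im}(\mathrm{tr}\bmod 2)$ has dimension $m+1$, which forces the $\tilde c_0$-coefficient of $\mathrm{tr}(x)$, $w(x)=1$, to be odd, and then $A\,\pi_*\tilde c_0\in 2H_1(\bar\Gamma;\mathbb{Z})$ with $A$ odd gives the claim). The paper avoids this issue entirely by exhibiting the adapted basis explicitly: Lemma \ref{th:Reibase} produces $y_1,\ldots,y_{m+1}$ with $\theta_\tau(y_1)$ a generator and $\theta_\tau(y_j)$ trivial for $j\geq2$, and Lemma \ref{generatorsviaranks} gives the basis $y_1^2,\,y_j,\,y_1y_jy_1^{-1}$ of $\pi_1(\Gamma)$, whose homology classes realize your normal form with $\pi_*[y_1^2]=2[y_1]$. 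Note finally that your closing expectation is misplaced on one point: discrete Morse theory is not what fills this gap --- in the paper it is used only for the target $S^1\vee I$; for the circle target the required integral control is pure combinatorial group theory (Reidemeister--Schreier).
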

  
Observe that Theorems~\ref{maintheorem} and~\ref{maintheoremII} can be stated replacing based homotopy classes by free homotopy classes. This is clear in the case of Theorem \ref{maintheorem}, while the case of Theorem~\ref{maintheoremII} follows from the fact that $\pi_1(S^1)$ is abelian, so that based homotopy classes and free homotopy classes coincide.

\smallskip
 One of the main ingredients used in the proof of Theorems \ref{maintheorem}  and \ref{maintheoremII} is  the algebro-topological criterion given by~\cite[Lemma 5]{MR3947929} 
 and \cite[Theorem 2.4]{Acta}, which we now extend, from involutions   and  $\mathbb{Z}_{n}$-actions to general actions by finite groups. Indeed, a property of the Borsuk--Ulam type for  homotopy classes has been considered by Gon\c{c}alves, Guashi and Laass   in \cite{Acta} within the 
 context of maps between two spaces $X$ and $Y$, where a finite group $H$ (not necessarily cyclic)  acts freely on   $X$. Namely, when 
  $\tau_{H}: H\times X \to X$ is a free left action of $H$ on $X$, the definition of the Borsuk--Ulam property for a class $\alpha\in [X,Y]$
with respect to the action $\tau_H$ given in \cite{Acta} goes as follows:
\begin{definition} We say that a free homotopy class $\alpha  \in [X,Y]$ has the Borsuk--Ulam property with respect to the action $\tau_H$ if, for every representative map $f: X \to Y$ of $\alpha$, there exist $h_1, h_2 \in  H$ and $x \in  X$ such that $f(\tau_H(h_1,x)) = f(\tau_H(h_2,x))$ with $h_1\ne  h_2$. Furthermore, we say that the triple 
$(X,\tau_H,Y)$ satisfies the Borsuk--Ulam property if every free  homotopy class of maps from $X$ to $Y$ has the Borsuk--Ulam property with respect to $\tau_H$.
\end{definition}

The main results of this work address the Borsuk-Ulam property for connected graphs in the cases where the tarjet graph is a tree (Theorem \ref{thirdmainintro}), a cycle (Theorem \ref{secondmainH}), and the connected graph $S^1\vee I$ with two vertices and two edges one of which is a loop (Theorem \ref{maintheoremH}). In all cases the free action on the source graph is denoted by $\tau_{H}\colon H\times\Gamma\to\Gamma$, and has $H=\mathbb{Z}_n$ with $n\geq2$.

\begin{theorem}\label{thirdmainintro}
The Borsuk--Ulam property holds for all triples $(\Gamma,\tau_{\mathbb{Z}_n},I)$, where $I=[0,1]$ is the unit interval. On the other hand, when $G$ is a tree not   homeomorphic to $I$, the Borsuk-Ulam property fails for the unique homotopy class in $[\Gamma,G]$.
\end{theorem}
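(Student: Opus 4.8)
The statement has two halves, which I would treat separately.

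\medskip

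\noindent\textbf{The interval case.} For the first half (the Borsuk--Ulam property holds for every triple $(\Gamma,\tau_{\mathbb{Z}_n},I)$) I would follow the hint in Remark~\ref{notaninterval} and argue directly with real-valued differences, with no need for configuration spaces. Fix a generator $g$ of $\mathbb{Z}_n$ and, given an arbitrary $f\colon\Gamma\to I\subseteq\mathbb{R}$, set $\phi\colon\Gamma\to\mathbb{R}$, $\phi(x)=f(\tau(g,x))-f(x)$. The key observation is the telescoping identity $\sum_{i=0}^{n-1}\phi(\tau(g^{i},x))=f(\tau(g^{n},x))-f(x)=0$, valid for every $x$ because $g^{n}=e$. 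Since $\Gamma$ is connected and $\phi$ is continuous, if $\phi$ were nowhere zero it would have constant sign, forcing the above sum to be nonzero, a contradiction. Hence $\phi(x_{0})=0$ for some $x_{0}$, i.e.\ $f(\tau(g,x_{0}))=f(x_{0})$; as the action is free and $g\ne e$ we have $\tau(g,x_{0})\ne x_{0}$, so $f$ fails to be injective on the orbit of $x_{0}$. This works for every $f$, so the Borsuk--Ulam property holds for all classes. (Equivalently, one can phrase this as the non-existence of a $\mathbb{Z}_n$-equivariant map $\Gamma\to F_n(I)$ for the cyclic-shift action: the shift permutes the $n!$ contractible components of $F_n(I)$ with no invariant component, as it moves the position of the largest coordinate, whereas a connected $\Gamma$ must land in a single, hence invariant, component.)

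\medskip

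\noindent\textbf{The tree case.} For the second half I would produce, conversely, a map $f\colon\Gamma\to G$ that is injective on every $\mathbb{Z}_n$-orbit; this makes the Borsuk--Ulam property fail, and since $G$ is a tree (hence contractible) the set $[\Gamma,G]$ has a single element, so failure for this one $f$ settles the whole class. Constructing such an $f$ is equivalent to constructing a $\mathbb{Z}_n$-equivariant map $F\colon\Gamma\to F_n(G)$ for the cyclic-shift action on the ordered configuration space, via $F(x)=\big(f(x),f(\tau(g,x)),\ldots,f(\tau(g^{n-1},x))\big)$. I would view $F$ as a section of the associated bundle $(\Gamma\times F_n(G))/\mathbb{Z}_n\to\Gamma/\mathbb{Z}_n$ with fiber $F_n(G)$, which is a genuine fiber bundle because the action on $\Gamma$ is free. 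The base $\Gamma/\mathbb{Z}_n$ is a finite graph, hence $1$-dimensional, so a section can be built cell by cell: choose any point of the fiber over each vertex and, over each edge, join the two chosen endpoint values by a path in the fiber. The only input needed is that the fiber $F_n(G)$ is path-connected.

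\medskip

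The crux is therefore the connectivity statement: if $G$ is a tree not homeomorphic to $I$, then $F_n(G)$ is path-connected. Since $G$ is connected, the unordered space $\UC_n(G)$ is connected, so it remains to realize every transposition of two particles by a loop of configurations, after which the covering $F_n(G)\to\UC_n(G)$ has connected total space. The hypothesis that $G$ is not an interval gives a vertex of degree at least three, and around such a branch vertex two particles on a common leg can be interchanged by parking each momentarily in a distinct side leg and then routing them past one another through the branch vertex without collision. This branch-point swapping is exactly what an interval lacks; it is the only genuinely geometric step, and I expect making it fully rigorous to be the main obstacle, although the discrete Morse model of $F_n(G)$ used elsewhere in the paper renders the required moves explicit. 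Granting the connectivity lemma, the section exists, the desired $f$ is obtained, and the Borsuk--Ulam property fails for the unique class in $[\Gamma,G]$, as claimed.
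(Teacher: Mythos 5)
Your proposal is correct, and it reaches both conclusions by routes that differ from the paper's in instructive ways. For the interval case, the paper argues exactly as in your parenthetical remark: $F_n(I)$ is a disjoint union of $n!$ contractible components permuted freely by every non-trivial permutation, so a connected $\Gamma$ admits no $\mathbb{Z}_n$-equivariant map to $F_n(I)$, and Lemma~\ref{equivemb} concludes. Your primary argument --- the telescoping identity $\sum_{i=0}^{n-1}\phi(\tau(g^i,x))=0$ combined with connectedness --- is more elementary, avoids configuration spaces entirely, is the argument alluded to in Remark~\ref{notaninterval}, and generalizes verbatim to any finite group by summing over a cyclic subgroup. For the tree case, the paper proceeds algebraically: by Corollary~\ref{th:borsuk_braidIcoro} it suffices to fill diagram~(\ref{techniqueVIII}) for the trivial class, and this is done by lifting $\theta_\tau$ through the surjection $B_n(G)\twoheadrightarrow\mathbb{Z}_n$ using freeness of $\pi_1(\Gamma/\tau,p_\tau(x_1))$, the relevant short exact sequence being available precisely because $F_n(G)$ is connected. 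You instead construct the equivariant map $F\colon\Gamma\to F_n(G)$ geometrically, as a section of the associated $F_n(G)$-bundle over the $1$-complex $\Gamma/\mathbb{Z}_n$, and then invoke Lemma~\ref{reciproco}; this is the geometric shadow of the same lifting argument (extending a section over a graph with path-connected fiber is the topological counterpart of lifting a homomorphism from a free group), and it has the mild advantage of bypassing the asphericity-based machinery of Corollary~\ref{th:borsuk_braidIcoro}, while the paper's algebraic route has the advantage of being uniform with its treatment of the circle and $S^1\vee I$ targets. Finally, the step you flag as ``the main obstacle'' --- path-connectivity of $F_n(G)$ for a tree with a vertex of degree at least $3$ --- need not be proved by hand: it is exactly the result the paper cites from Abrams' thesis (\cite[Theorem 2.7]{MR2701024}), so your branch-vertex swapping sketch can simply be replaced by that citation, after which your proof is complete.
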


\begin{theorem}\label{secondmainH}
The Borsuk--Ulam property holds for most of the homotopy classes in $[\Gamma,S^1]$. Explicitly,  for a suitable identification of $\hspace{.4mm}[\Gamma,S^1]$ with $\mathbb{Z}^{nm+1}$ (similar to the one in Theorem \ref{maintheoremII}), the homotopy classes of maps $\Gamma\to S^1$ for which the Borsuk--Ulam property fails are precisely the 
$(nm+1)$-tuples $(p,p_1,p_1,\ldots,p_1,p_2,p_2,\ldots,p_2,\ldots,p_m,p_m,\ldots,p_m)$ with $p$ congruent to $1$ mod $n$ (and  $p_1,\ldots,p_m$ arbitrary).
\end{theorem}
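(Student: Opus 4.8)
The plan is to reduce the problem to the equivariant-map criterion extended above (the generalization of \cite[Lemma 5]{MR3947929} and \cite[Theorem 2.4]{Acta} to finite groups), and then to make it purely algebraic using that $\Gamma$, $S^1$ and the relevant configuration space are aspherical. Identify $[\Gamma,S^1]\cong H^1(\Gamma;\mathbb{Z})$ and write $u\in H^1(\Gamma;\mathbb{Z})$ for the class of $\alpha$. By the criterion, the Borsuk--Ulam property fails for $\alpha$ exactly when there is a $\mathbb{Z}_n$-equivariant map $\Phi\colon\Gamma\to F_n(S^1)$, where $\mathbb{Z}_n$ acts on $F_n(S^1)$ by cyclically permuting coordinates, whose first coordinate $\mathrm{pr}_0\circ\Phi$ represents $\alpha$; such a $\Phi$ is precisely $x\mapsto(f(x),f(\tau x),\dots,f(\tau^{n-1}x))$ for a representative $f$ that is injective on every $\mathbb{Z}_n$-orbit. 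Since $\Gamma$, $S^1$ and each component of $F_n(S^1)$ are aspherical, the equivalence of the geometric and algebraic conditions established earlier turns the existence of $\Phi$ into a condition on fundamental groups and first cohomology.

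Next I would exploit the geometry of the target: $F_n(S^1)$ deformation retracts onto $\bigsqcup_{(n-1)!}S^1$, its components being indexed by the cyclic orders of the $n$ labels, each homotopy equivalent to $S^1$ via the overall rotation. As $\Gamma$ is connected, $\Phi(\Gamma)$ lies in one component, which equivariance forces to be invariant under the cyclic shift $\sigma$. The $\sigma$-invariant components are exactly the $\phi(n)$ rotationally symmetric configurations $y_k=\exp(\mathrm{i}(\phi+2\pi ck/n))$ indexed by units $c\in(\mathbb{Z}/n)^\times$; on such a component $C_c$ the generator $\sigma$ acts as rotation by $2\pi c/n$, so $C_c\to C_c/\mathbb{Z}_n\cong S^1$ is an $n$-fold cover whose monodromy sends the generator to $c^{-1}\in\mathbb{Z}_n$. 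Hence an equivariant $\Phi$ into $C_c$ is the same datum as a map $\bar\Phi\colon\Gamma'\coloneqq\Gamma/\mathbb{Z}_n\to C_c/\mathbb{Z}_n\cong S^1$ whose pullback of $C_c\to C_c/\mathbb{Z}_n$ is the defining cover $p\colon\Gamma\to\Gamma'$.

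The computation then runs through the transfer of $p$. Writing $\rho\in H^1(\Gamma';\mathbb{Z}_n)$ for the class of $p$ and $\hat\rho\in H^1(\Gamma';\mathbb{Z})$ for an integral lift, the commuting square together with $\mathrm{tr}\circ p^*=n\cdot\mathrm{id}$ forces $v\coloneqq\bar\Phi^*(\text{generator})=\mathrm{tr}(u)$ and $nu=p^*v$; the latter is equivalent to $u$ being $\mathbb{Z}_n$-invariant (over $\mathbb{Q}$ the norm $N=p^*\mathrm{tr}$ is $n$ times the projection onto invariants), while the pullback-of-covers condition reads $\mathrm{tr}(u)\equiv c\rho\pmod n$. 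Using that the free action gives $H^1(\Gamma;\mathbb{Q})\cong\mathbb{Q}_{\mathrm{triv}}\oplus(\mathbb{Q}[\mathbb{Z}_n])^{\oplus m}$, I would fix a basis realizing $[\Gamma,S^1]\cong\mathbb{Z}^{nm+1}$ in which the invariant classes are precisely the tuples $(p,p_1,\dots,p_1,\dots,p_m,\dots,p_m)$ of the statement: the trivial summand is spanned by a distinguished class $w=\tfrac1n p^*\hat\rho$ with $\mathrm{tr}(w)=\hat\rho$, and each copy of the regular representation contributes a block of $n$ equal coordinates. For such a class $\mathrm{tr}(u)\equiv p\,\rho\pmod n$, so $\mathrm{tr}(u)\equiv c\rho$ becomes $p\equiv c\pmod n$, and the canonical invariant component $c=1$ yields the displayed failure locus $p\equiv 1\pmod n$.

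I expect the main obstacle to be the final step: determining exactly which rotationally symmetric components $C_c$ are compatible with a globally defined equivariant map—hence pinning down the precise admissible congruence class of $p$ modulo $n$—and, in parallel, controlling the passage from $\mathbb{Q}$- to $\mathbb{Z}$-coefficients so that the tuple identification is literally $\mathbb{Z}^{nm+1}$, i.e.\ handling the finite-index discrepancy between $p^*H^1(\Gamma';\mathbb{Z})$ and the full invariant lattice in which $w$ lives. By contrast, no graph-braid-group input is required here: since the target is $S^1$, the configuration space $F_n(S^1)$ is a disjoint union of circles, so the discrete Morse theory used elsewhere in the paper trivialises and the whole argument is carried by the cohomological bookkeeping above.
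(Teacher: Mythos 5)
Your reduction and bookkeeping follow, in cohomological language, essentially the same route as the paper's proof (part (c) of Theorem \ref{thirdmain}): the paper likewise reduces to the diagram (\ref{techniqueV}) via Corollary \ref{th:borsuk_braidIcoro}, uses the bases supplied by Lemmas \ref{th:Reibase} and \ref{generatorsviaranks}, and enumerates the admissible homomorphisms $\psi$; your transfer computation and your identification of the invariant lattice with the block tuples are a faithful $H^1$ translation of that enumeration. The one genuine difference is the point you yourself flag as ``the main obstacle'': which invariant components of $F_n(S^1)$ can receive an equivariant map. The paper fixes a single component at the outset (``Choose a connected component $F_n(S^1)_0$'') and never revisits that choice, whereas you correctly record that there are $\phi(n)$ invariant components $C_c$, indexed by units $c\in(\mathbb{Z}/n)^{\times}$, whose monodromy depends on $c$.

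That deferred step is a genuine gap in your proposal, and it cannot be closed in the direction you hope, because nothing rules out the components with $c\neq1$. Since $\pi_1(\Gamma/\tau)$ is free, for \emph{every} unit $c$ there is a homomorphism $\psi\colon\pi_1(\Gamma/\tau)\to\pi_1(C_c/\mathbb{Z}_n)\cong\mathbb{Z}$ compatible with the classifying maps: send $y_1$ to any integer $d\equiv c\pmod n$ and each $y_j$ ($j\geq 2$) to a multiple of $n$. Feeding this into the criterion produces, for every unit $c$, failing classes whose first coordinate is $\equiv c\pmod n$ (with the block structure in the remaining coordinates). So the failure locus your argument actually establishes is the set of block tuples with $\gcd(p,n)=1$, and the restriction to the ``canonical'' component $c=1$ in your final step is unjustified. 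This is not an artifact of the algebra: take $n=3$, $\Gamma=S^1$ with the rotation action $\tau(z)=e^{2\pi i/3}z$ (so $m=0$), and $f(z)=z^2$. The orbit $\{z,\,e^{2\pi i/3}z,\,e^{4\pi i/3}z\}$ is sent to the three pairwise distinct points $z^2,\,e^{4\pi i/3}z^2,\,e^{2\pi i/3}z^2$ for every $z$, so the class $p=2$ fails the Borsuk--Ulam property although $2\not\equiv 1\pmod 3$; and no basis-induced identification of $[S^1,S^1]$ with $\mathbb{Z}$ converts $\{p:\gcd(p,3)=1\}$ into a single residue class mod $3$.

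The upshot is that your approach, carried honestly to completion, does not prove the statement as written; it contradicts it whenever $(\mathbb{Z}/n)^{\times}\neq\{1\}$, i.e.\ for all $n\geq3$ (for $n=2$ the space $F_2(S^1)$ is connected and the issue disappears, which is why Theorem \ref{maintheoremII} is unaffected). The paper's proof diverges from yours at exactly this point and in the opposite way: by fixing one component (implicitly, through the basepoint convention in Corollary \ref{th:borsuk_braidIcoro}, whose ``only if'' direction is only valid when the component containing the basepoint configuration is allowed to vary), it correctly shows that the $p\equiv1$ block tuples fail, but it overlooks the equivariant maps landing in the other $\phi(n)-1$ invariant components. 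Your component analysis is the right one; the conclusion it supports is that the failure locus consists of the block tuples with $p$ a unit mod $n$, not only those with $p\equiv1\pmod n$.
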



\begin{theorem}\label{maintheoremH} 
Assume that the Euler characteristic of $\hspace{.5mm}\Gamma$ is zero. Then
the Borsuk--Ulam property fails for all homotopy classes in $[\Gamma, S^1\vee I]$, i.e., for every $\alpha\in[\Gamma, S^1\vee I]$ 
there is a representative $f\in\alpha$ satisfying
$ f(\tau_{\mathbb{Z}_n}(h_1, x))  \ne    f(\tau_{\mathbb{Z}_n}(h_2, x))$ for  all  $h_1\ne h_2 \in \mathbb{Z}_n$ and  all $x\in\Gamma$.            
\end{theorem}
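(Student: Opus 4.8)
The plan is to derive the theorem from the algebraic characterisation for aspherical targets, reducing it to a transparent computation in the graph braid group of the lollipop. First I would record the shape of $\Gamma$: since $\Gamma$ is connected with $\chi(\Gamma)=0$ we have $b_1(\Gamma)=1$, so $\Gamma\simeq S^1$, and as the $\mathbb{Z}_n$-action is free, $\Gamma\to\Gamma/\mathbb{Z}_n$ is a regular $n$-fold cover with $\chi(\Gamma/\mathbb{Z}_n)=0$ as well; hence $\pi_1(\Gamma/\mathbb{Z}_n)=\mathbb{Z}=\langle\rho\rangle$ and $\pi_1(\Gamma)=\langle\rho^{n}\rangle$. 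Both $\Gamma$ and $S^1\vee I$ are aspherical, and so is $\C_n(S^1\vee I)$ because configuration spaces of graphs are $K(\pi,1)$'s; this is exactly the point at which discrete Morse theory enters, realising $\C_n(S^1\vee I)$ as a nonpositively curved cube complex. With the hypotheses of the aspherical characterisation in force, the Borsuk--Ulam property fails for the class of degree $d\in\mathbb{Z}\cong[\Gamma,S^1\vee I]$ precisely when there is an element $g\in\pi_1\big(\C_n(S^1\vee I)/\mathbb{Z}_n\big)$ whose image under the monodromy map $\pi_1(\C_n(S^1\vee I)/\mathbb{Z}_n)\to\mathbb{Z}_n$ is the generating $n$-cycle $\sigma\colon h\mapsto h+1$ and which satisfies $\mathrm{pr}_{1*}(g^{n})=d$, where $\mathrm{pr}_1\colon\C_n(S^1\vee I)\to S^1\vee I\to S^1$ records the first coordinate followed by the collapse of $I$.

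I would then show this condition is solvable for every $d$, the decisive step being to trade $\mathrm{pr}_{1*}(g^{n})$ for a genuine homomorphism. Collapsing $I$ and multiplying the coordinates gives a map $m\colon\C_n(S^1\vee I)\to(S^1)^n\to S^1$ that is invariant under cyclic permutation, hence descends to $\bar m\colon\C_n(S^1\vee I)/\mathbb{Z}_n\to S^1$ and to $\bar m_*\colon\pi_1(\C_n(S^1\vee I)/\mathbb{Z}_n)\to\mathbb{Z}$. Because $\sigma$ is a full $n$-cycle, the first strand of $g^{n}$ successively traverses all $n$ coordinate loops of $g$, so that $\mathrm{pr}_{1*}(g^{n})=\bar m_*(g)$. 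Writing $P=\pi_1(\C_n(S^1\vee I))$ for the kernel of the monodromy map, the admissible $g$ form one coset $g_0P$, on which $\bar m_*$ attains the values $\bar m_*(g_0)+m_*(P)$. It therefore suffices to exhibit (i) one $g_0$ with full-cycle monodromy, namely the loop rotating $n$ equally spaced points of $S^1$ by $2\pi/n$, and (ii) an element of $P$ with $m_*=1$. For (ii) the interval is used decisively: the loop that drives one point once around $S^1$ while the remaining $n-1$ points rest parked at distinct positive depths of $I$ is a bona fide configuration loop (the moving point can only meet the others at the wedge, where they are absent), lies in $P$, and has $m_*=1$. Hence $\bar m_*(g_0P)=\mathbb{Z}$, every degree $d$ is realised, and the Borsuk--Ulam property fails for all classes.

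This also clarifies why the interval is essential and matches the paper's earlier circle computation: for the target $S^1$ one has $P=\pi_1\big(\C_n(S^1)\big)$ generated by the full rotation, on which $m_*$ takes values in $n\mathbb{Z}$, forcing $d\equiv1\pmod n$ and recovering Theorem~\ref{secondmainH} with $m=0$; the parking loop has no analogue on $S^1$, where a winding point cannot avoid the others, and it is precisely this loop that upgrades $m_*(P)$ from $n\mathbb{Z}$ to all of $\mathbb{Z}$. I expect the main obstacle to be not the existence of the parking loop, which is elementary, but the rigorous justification of the passage to the algebraic criterion — establishing asphericity of $\C_n(S^1\vee I)$ and a correct combinatorial model of its fundamental group via the Farley--Sabalka discrete Morse function, and verifying that these explicit loops carry the claimed monodromy and $\bar m_*$-values so that the identity $\mathrm{pr}_{1*}(g^{n})=\bar m_*(g)$ and the translation of ``degree $d$'' into the braid group are valid. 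That graph-braid-group bookkeeping is exactly where the discrete Morse theory does the heavy lifting.
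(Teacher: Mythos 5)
Your proof is correct, and although it begins with the same reduction as the paper, its core computation is genuinely different. Both arguments invoke the aspherical characterization (Corollary \ref{th:borsuk_braidIcoro}, legitimate here because, by Abrams, graph configuration spaces are aspherical), note that $\chi(\Gamma)=0$ forces $\pi_1(\Gamma)\cong\mathbb{Z}\cong\pi_1(\Gamma/\mathbb{Z}_n)$ with the covering inclusion given by multiplication by $n$, and thereby translate failure of the Borsuk--Ulam property for the degree-$d$ class into the existence of $g\in B_{\mathbb{Z}_n}(S^1\vee I)$ with prescribed full-cycle monodromy and $\text{ev}_{e\#}(g^n)=d$ (your $\mathrm{pr}_{1*}$ agrees with the paper's $\text{ev}_{e\#}$, composed with the homotopy equivalence collapsing $I$). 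At that point the paper turns to discrete Morse theory: in the notation of Section \ref{swedgei} (where $m$ denotes $n$), Farley--Sabalka's field yields explicit free bases of $\pi_1(F_m)$ and $\pi_1(F_m/\mathbb{Z}_m)$ (Corollary \ref{bases}) and formulas for $\iota$, $\theta$ and $p_1$ on those bases (Propositions \ref{groupmonoinmostofthecases}--\ref{variacionparatipo4}, \ref{theta}, \ref{proyeccion1}); the witness $\psi(g)=z\cdot\iota(w_1^{\ell})$ is then verified via the telescoping computation (\ref{ellbases}) and a final choice of $\ell$. You bypass all of this with two elementary devices: the total-winding homomorphism $\bar m_*$ (collapse $I$, multiply coordinates in $S^1$), together with the identity $\text{ev}_{e\#}(g^n)=\bar m_*(g)$ --- valid precisely because the monodromy is an $n$-cycle, and converting the non-homomorphic assignment $g\mapsto\text{ev}_{e\#}(g^n)$ into a homomorphism on the coset $g_0P$ of admissible elements --- and two concrete loops: the $2\pi/n$-rotation of $n$ equally spaced points (realizing the monodromy, with $\bar m_*$-value $1$) and the parking loop in $P=P_{\mathbb{Z}_n}(S^1\vee I)$ with $m_*$-value $1$, whence $\bar m_*(g_0P)=\mathbb{Z}$ and every $d$ occurs. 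Your winding identity plays exactly the role of Corollary \ref{auxi} combined with (\ref{ellbases}) and Proposition \ref{proyeccion1}, and your parking loop is the geometric counterpart of the generators $c_1^i\cdot\mathcal{O}_m$. The trade-off: your route is shorter, makes the role of the interval transparent, and recovers Theorem \ref{secondmainH} for $m=0$ as a consistency check (on $S^1$ the parking loop has no analogue, so $m_*(P)=n\mathbb{Z}$ and $d\equiv 1$ mod $n$); the paper's route is longer but produces complete bases of both graph braid groups and all three maps in (\ref{maps}), information of independent interest beyond this theorem. Two small points. First, the basepoint bookkeeping you elide (the rotation loop and the parking loop are based at different configurations) is genuinely harmless, since every invariant you use ($\theta$, $\bar m_*$, $m_*$) is a homomorphism into an abelian group and hence unchanged under change-of-basepoint conjugation; this should be said explicitly. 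Second, contrary to your closing paragraph, your argument does not need the Farley--Sabalka model at all: discrete Morse theory enters only through Abrams' asphericity theorem, so the ``heavy lifting'' you defer to graph-braid-group bookkeeping is in fact already accomplished by your winding identity and the two explicit loops.
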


As in \cite{MR3993193,MR2639841,MR3947929,MR4235703,MR01}, we study the Borsuk--Ulam property for graphs through a sharp algebraic model in terms of braid groups. In our case (graphs), the key information comes from a detailed control of the topological combinatorics associated to graph configuration spaces (both in the ordered and unordered contexts) provided by Farley-Sabalka's discrete gradient field on Abrams' homotopy model. The generalized braid-group characterization is established in Section \ref{caracterizacion}. Section~\ref{proofdetails} is devoted to proving Theorems~\ref{thirdmainintro} and \ref{secondmainH},  while Section \ref{swedgei} addresses the proof of Theorem \ref{maintheoremH} after a revision of the needed facts on discrete Morse theory.

\medskip\noindent {\bf Acknowledgements:} The first author was partially supported by the FAPESP ``Projeto Tem\'atico-FAPESP Topologia Alg\'ebrica, Geom\'etrica e Diferencial''  grant no. 2022/16455-6 (S\~ao Paulo-Brazil). This work started during the  visit of the second author to  IME-USP, S\~ao Paulo, from April 22 to May 06, 2023, and continued during  the visit of first author to the  Mathematics Department of Cinvestav from June 3 to June 14, 2024. The visit of the second author was partially supported by  the FAPESP ``Projeto Tem\'atico-FAPESP Topologia Alg\'ebrica, Geom\'etrica e Diferencial''  grant no. 2016/24707-4 (S\~ao Paulo-Brazil). Both authors are greatly thankful to the hosting institutions for the invitations and the great hospitality during the visits. 

\section {Generalized braid-group characterization}\label{caracterizacion}
Let $H$  be a finite group with $n$ elements and  $\tau: H\times X\to X$  be a free left action of $H$ on a given topological space $X$, where we set
\begin{equation}\label{notaciondeaccion}
h\cdot x:=\tau(h,x).
\end{equation}
In this section we consider the natural  action
\begin{equation}\label{laaccion} 
 \star\colon S_H\times F_{H}(Y)\to F_{H}(Y),
 \end{equation}
where $S_H$ stands for the symmetric group of set theoretic bijections of $H$, and $F_{H}(Y)$ is an $H$-labeled form of the standard configuration space of $n$ ordered pairwise distinct points on an arbitrary space $Y$. In particular, using the Cayley-type monomorphism
\begin{equation}\label{cayleytypemorphism}
\iota:H\to S_{H},
\end{equation}
we get a restricted left action $H\times F_{H}(Y)\to F_{H}(Y)$.     We will see that the former action is a left action and   so will be the restricted  action. The main goal in this section is the establishment of an algebro-topological characterization for the failure of the Borsuk--Ulam property with respect to $H$  for homotopy classes of maps   $f:X \to   Y$.   The characterization is given in terms  of the fundamental 
 groups of various  spaces related to $X$,  $Y$ and  $F_H(Y)$,  at least  when  these spaces  are  $K(\pi, 1)$. We stress that our characterization works independently of the cardinality of $H$ or, even, if $H$ fails to be abelian. In the next section we apply these results to study the case where the spaces are certain graphs and  $H=\mathbb{Z}_n$, the cyclic group of order $n$.     

\medskip
 We start by setting some conventions. The group operation on $S_H$ is the standard composition of maps, in the understanding that the first factor acts first. Namely, given  $\alpha_1, \alpha_2 \in S_{H}$, we define
 \begin{equation}\label{firstsecond}
 \alpha_1 \alpha_2:=  \alpha_2\circ  \alpha_1.
 \end{equation}
Observe that, up to this point, the group structure of $H$ is not used, and $S_{H}$ is just the usual   symmetric group $S_n$ on  $n$ letters (recall that $n=|H|$) with the corresponding convention about the action of the factors. An isomorphism $S_n\simeq S_H$ can be constructed using an arbitrary  bijection between $\{1,2,...,n\}$ and $H$.
Likewise, the space $F_{H}(Y)$ consists  of all injective maps from  $H$  to  $Y$ with the compact-open topology. Again, as $H$ is discrete, a given bijection between $\{1,2,\ldots,n\}$ and $H$ determines a homeomorphism between $F_{H}(Y)$ and the standard configuration space of $n$-tuples on $Y$. In such a setting, and using the notation (\ref{notaciondeaccion}), the map (\ref{laaccion}) is given by
 $$\alpha\star \beta=\beta \circ \alpha,$$ the composite of two functions. Lastly, we consider the Cayley-type map (\ref{cayleytypemorphism}) whose value at an element $h\in H$ is the bijection  $\iota(h): H \to H$ given by  $\iota(h)(h_1)=h_1h$ for all $h_1\in H$. The verification of the following fact is straightforward:

\begin{lemma}\label{action} 
The map $\star: S_{H}\times F_{H}(Y) \to F_{H}(Y)$  is a free left action, while $\iota: H  \to S_{H}$ is a group monomorphism.
\end{lemma}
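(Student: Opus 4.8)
The plan is to treat the four assertions packaged in the lemma separately, in the order that makes the role of the composition convention (\ref{firstsecond}) most transparent, since that convention is precisely what converts the naive ``right-ness'' of pre- and post-composition into genuine left-action and homomorphism statements.

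First I would dispose of $\iota$. For each $h\in H$ the assignment $h_1\mapsto h_1h$ is a bijection of $H$ with inverse $h_1\mapsto h_1h^{-1}$, so $\iota(h)\in S_H$ is well defined. To check that $\iota$ is a homomorphism I would evaluate both sides of $\iota(h_1h_2)=\iota(h_1)\iota(h_2)$ at an arbitrary $k\in H$: the left side gives $k(h_1h_2)$, while the right side, unwound through (\ref{firstsecond}) as $\iota(h_2)\circ\iota(h_1)$, gives $\iota(h_2)(kh_1)=(kh_1)h_2$; associativity in $H$ equates the two. Injectivity then follows by evaluating $\iota(h)$ at the identity $e\in H$: if $\iota(h)$ is the identity bijection then $eh=e$, forcing $h=e$. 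Here the convention (\ref{firstsecond}) is doing essential work, since with the usual composition order $\iota$ would instead be an anti-homomorphism.

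Next I would verify that $\star$ is a left action. The identity axiom is immediate, as $\mathrm{id}\star\beta=\beta\circ\mathrm{id}=\beta$. For the compatibility axiom I would compute $(\alpha_1\alpha_2)\star\beta$ and $\alpha_1\star(\alpha_2\star\beta)$ and compare: using $\alpha\star\beta=\beta\circ\alpha$ together with (\ref{firstsecond}), the first equals $\beta\circ(\alpha_2\circ\alpha_1)$ and the second equals $(\beta\circ\alpha_2)\circ\alpha_1$, so associativity of function composition finishes the check. As with $\iota$, the point is that the reversed product convention exactly cancels the order reversal inherent in pre-composition, so that $\star$ is a left, and not a right, action.

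Finally, for freeness I would exploit that the configurations in $F_H(Y)$ are \emph{injective} maps. If $\alpha\star\beta=\beta$ for some $\beta\in F_H(Y)$, then $\beta\circ\alpha=\beta$, i.e.\ $\beta(\alpha(h))=\beta(h)$ for every $h\in H$; injectivity of $\beta$ gives $\alpha(h)=h$ for all $h$, whence $\alpha=\mathrm{id}$. Thus every stabilizer is trivial, which is exactly freeness; continuity is a non-issue, as $S_H$ is finite, hence discrete, and for fixed $\alpha$ the map $\beta\mapsto\beta\circ\alpha$ merely permutes coordinate functions and so is a homeomorphism of $F_H(Y)$ in the compact-open topology. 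I do not expect any genuine obstacle here: the lemma is routine, and the only thing requiring care is consistent bookkeeping with (\ref{firstsecond}), which I would flag explicitly at each place where the order of composition is reversed.
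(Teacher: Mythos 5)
Your proof is correct, and it is precisely the routine direct verification that the paper itself omits (the paper merely states that ``the verification of the following fact is straightforward''). All four checks --- that $\iota$ is an injective homomorphism, that $\star$ satisfies the two left-action axioms, and that freeness follows from the injectivity of configurations $\beta\in F_H(Y)$ --- are carried out correctly, with the composition convention (\ref{firstsecond}) handled exactly as intended.
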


Suppose that  $f:X\to Y$ is a map such that $f$ restricted to any   orbit of the free $H$-action on $X$ is injective, so that the homotopy class determined by $f$ does not satisfy the Borsuk--Ulam property with respect to the given $H$-action. Under  this  hypothesis 
we  have  a  well   defined  map   $F_f:X \to F_{H}(Y)$  given  by the formula  $F_f(x)(h)=f(hx)$.  Although straightforward to verify, the following  is a key fact.

\begin{lemma}\label{equivemb} Under the conditions above, the map $F_f:X\to F_H(Y)$ is equivariant with respect to the monomorphism (\ref{cayleytypemorphism}),  the free left action of $H$ on $X$ and the free left action of  $S_{H}$ on $F_{H}(Y)$.
\end{lemma}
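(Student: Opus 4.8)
The plan is to reduce the statement to a direct computation, since equivariance of $F_f$ with respect to the monomorphism (\ref{cayleytypemorphism}) means precisely that
\[
F_f(h\cdot x)=\iota(h)\star F_f(x)\qquad\text{for all }h\in H,\ x\in X,
\]
and both sides are elements of $F_H(Y)$, i.e. injective functions $H\to Y$. The approach is therefore to fix $h\in H$ and $x\in X$, evaluate each side at an arbitrary $h_1\in H$, and verify that the resulting two points of $Y$ coincide.

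For the left-hand side I would use that $\tau$ is a \emph{left} action, so that $h_1\cdot(h\cdot x)=(h_1h)\cdot x$; combined with the defining formula $F_f(x')(h_1)=f(h_1\cdot x')$ this yields
\[
F_f(h\cdot x)(h_1)=f\bigl(h_1\cdot(h\cdot x)\bigr)=f\bigl((h_1h)\cdot x\bigr).
\]
For the right-hand side I would unwind the two conventions introduced before Lemma~\ref{action}: the action (\ref{laaccion}) post-composes, $\alpha\star\beta=\beta\circ\alpha$, and $\iota(h)$ is right multiplication, $\iota(h)(h_1)=h_1h$. Hence
\[
\bigl(\iota(h)\star F_f(x)\bigr)(h_1)=\bigl(F_f(x)\circ\iota(h)\bigr)(h_1)=F_f(x)(h_1h)=f\bigl((h_1h)\cdot x\bigr),
\]
which agrees with the previous display. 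Since $h_1$ was arbitrary the two functions coincide, establishing the required identity.

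The computation is routine, so the only real point to watch --- and what I would treat as the main obstacle --- is the consistent bookkeeping of the three multiplication/composition conventions in play: the product in $S_H$ is $\alpha_1\alpha_2=\alpha_2\circ\alpha_1$ (first factor acts first), the action $\star$ post-composes, and $\iota$ realizes $H$ inside $S_H$ by \emph{right} translations. It is exactly this combination --- right multiplication on the label set $H$ matched against the left action on $X$ --- that reshuffles the orbit in the way needed for $F_f$ to intertwine the two actions. Indeed, defining $\iota$ by left translations would force $f((hh_1)\cdot x)$ on the right-hand side instead, making the identity fail for non-abelian $H$ and, consistently, failing to produce a homomorphism in the first place; so the substantive content behind the otherwise immediate verification is that these conventions are mutually compatible, as already recorded in Lemma~\ref{action}.
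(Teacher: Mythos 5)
Your proof is correct and follows essentially the same route as the paper's: a direct evaluation of $F_f(h\cdot x)$ and $\iota(h)\star F_f(x)$ at an arbitrary $h_1\in H$, reducing both sides to $f\bigl((h_1h)\cdot x\bigr)$ via the left-action identity and the conventions $\alpha\star\beta=\beta\circ\alpha$, $\iota(h)(h_1)=h_1h$. Your closing remark on why $\iota$ must act by right translations for non-abelian $H$ is a sound observation, though the paper leaves it implicit.
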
 
\begin{proof}   From the definitions, for arbitrary elements $x\in X$ and $h,h_1\in H$ we have
\begin{align*}
F_f(hx)(h_1)&=f(h_1(hx))=f((h_1h)x)=
 F_f(x)(h_1h)\\
 &=   F_f(x)(\iota(h)(h_1))= (F_f(x) \circ \iota(h))(h_1)=(\iota(h)\star F_f(x))(h_1).
\end{align*}
 So  $F_f(hx)=\iota(h)\star F(x)$, as asserted.  
\end{proof}

Actually, the existence of a map $F\colon X\to F_H(Y)$ which happens to be equivariant in the sense of Lemma \ref{equivemb} captures the failure of the Borsuk--Ulam property for the triple $(X,\tau,Y)$. Explicitely, such an equivariant map $F$ determines a map $f\colon X\to Y$ given by $f(x)=F(x)(e)$, where $e\in H$ is the neutral element. We then have:

\begin{lemma}\label{reciproco} For arbitrary elements $h\in H$ and $x\in X$, we have $f(hx)=F(x)(h)$. In particular $f$ is injective when restricted to any $H$-orbit in $X$ and, in fact, $F=F_f$.
\end{lemma}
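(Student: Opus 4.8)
The plan is a direct unwinding of the definitions, taking care with the two conventions fixed above: the product in $\star$ composes right-to-left (so $\iota(h)\star F(x)=F(x)\circ\iota(h)$), and the Cayley map is right translation, $\iota(h)(h_1)=h_1h$. With those in hand the statement reduces to a one-line computation plus a remark about freeness.

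First I would verify the claimed formula. By the definition of $f$ we have $f(hx)=F(hx)(e)$. Since $F$ is equivariant in the sense of Lemma~\ref{equivemb}, we may rewrite $F(hx)=\iota(h)\star F(x)=F(x)\circ\iota(h)$, and evaluating at the neutral element gives
\[
f(hx)=\bigl(F(x)\circ\iota(h)\bigr)(e)=F(x)\bigl(\iota(h)(e)\bigr)=F(x)(eh)=F(x)(h),
\]
which is exactly the asserted identity $f(hx)=F(x)(h)$.

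Next, for the injectivity on orbits, I would observe that $F(x)$ is by definition an element of $F_H(Y)$, hence an injective map $H\to Y$. Because the $H$-action on $X$ is free, the orbit of $x$ is in bijection with $H$ via $h\mapsto hx$, and the formula just proved exhibits $f$ restricted to that orbit as the composite of this bijection with the injective map $F(x)$; thus $f$ is injective on every $H$-orbit. Finally, comparing with the defining formula $F_f(x)(h)=f(hx)$ from Lemma~\ref{equivemb}, the identity above reads $F_f(x)(h)=F(x)(h)$ for all $h\in H$ and $x\in X$, so that $F_f=F$.

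There is essentially no genuine obstacle here; the only point demanding care is the bookkeeping of the composition order in $\star$ together with the right-handed Cayley convention, which is precisely what makes the evaluation at $e$ collapse to $F(x)(h)$ rather than, say, $F(x)(h^{-1})$. Getting this matching right is what guarantees that the map recovered from an equivariant $F$ is genuinely the inverse construction to $f\mapsto F_f$, closing the loop with Lemma~\ref{equivemb}.
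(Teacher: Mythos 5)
Your proof is correct and follows essentially the same route as the paper: the identity $f(hx)=F(hx)(e)=(\iota(h)\star F(x))(e)=(F(x)\circ\iota(h))(e)=F(x)(h)$ is exactly the paper's computation. Your additional spelling-out of the injectivity-on-orbits and $F=F_f$ conclusions (which the paper leaves implicit) is a faithful unwinding of the same argument, not a different method.
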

\begin{proof}
This is again a straightforward computation:
$$
f(hx)=F(hx)(e)=\left( \iota(h)\star F(x) \right)(e)=\left( F(x) \circ \iota(h) \right)(e)=F(x)(h),
$$
where the second equality holds in view of the equivariant property assumed for $F$.
\end{proof}

\begin{remark}\label{ida}{\em
Let $X/\tau$ be the quotient of $X$ by the action of $H$, 
$p_{\tau}: X\to X/\tau$ the quotient   map, 
 $D_{H}(Y)$ the quotient  of $F_H(Y)$ by the action of  the  subgroup $\iota(H)\subset S_H$ and   $p_H:F_H(Y)  \to D_{H}(Y)$    the  quotient map.  Given  maps $f$  and  $F$  as above (so that the homotopy class of $f$ does not have the Borsuk--Ulam property with respect to $\tau$), we have the (topological) commutative diagram
 	\begin{equation}\label{techniqueI}\xymatrix{
X \ar[r]^{F\hspace{4mm}}  \ar[d]_{p_{\tau}} & F_H(Y)  \ar[d]^{p_{H}}\\
X/\tau \ar[r]^{\overline F\hspace{3mm}}  & D_{H}(Y)} 
\end{equation}
where  $\overline F$ is  the quotient of the equivariant map $F$. Fix a base point $x_1\in X$ and take the corresponding images as base points for the rest of the spaces in (\ref{techniqueI}). Note that the base point $F(x_1)\in F_H(Y)$ has ``$e$-coordinate'' $F(x_1)(e)=f(x_1)$. Indeed, by Lemma \ref{reciproco}, we have in fact
\begin{equation}\label{recupera}
\text{ev}_e\circ F=f,
\end{equation}
where  $\text{ev}_e\colon F_H(Y)\to Y$ is the map evaluating at the neutral element $e\in H$. 
Then set $$P_H(Y){}:={}\pi_1(F_H(Y), F(x_1)) \quad\text{and}\quad  B_{H}(Y){}:={}\pi_1(D_{H}(Y), p_H\circ F(x_1)),$$   and observe that,
since $p_\tau$ and $p_H$ 
are covering maps,   we obtain short exact sequences of groups
$$1 \to \pi_1(X,x_1) \stackrel{p_{\tau\#}}\longrightarrow \pi_1(X/\tau,p_\tau(x_1)) \stackrel{\theta_\tau}\longrightarrow H\to 1 \quad\text{and}\quad1 \to P_H(Y) \stackrel{p_{H\#}}\longrightarrow B_{H}(Y) \stackrel{\theta_H}\longrightarrow \iota(H) \to 1, $$
where $\theta$-arrows stand for the corresponding classifying maps, together with the (algebraic) commutative diagram
	\begin{equation}\label{techniqueII}\xymatrix{
\pi_1(X,x_1) \ar[r]^{\varphi}  \ar@{^{(}->}[d]_{p_{\tau\#}} & P_H(Y)  \ar@{^{(}->}[d]^{p_{H\#}}\\
\pi_1(X/\tau,p_\tau(x_1)) \ar[r]^{\hspace{7mm}\psi} \ar@{>>}[d]_{\theta_{\tau}} & B_{H}(Y) \ar@{>>}[d]^{\theta_H}\\
H \ar[r]^{\iota}_{\cong} & \iota(H),
}\end{equation}
where ${\varphi} =F_{*}$ and  ${\psi} ={\bar F}_{*}$ are the maps induced at the fundamental-group level.
}\end{remark}
 
 Under relatively mild conditions, the algebro-topological situations in (\ref{recupera}) and (\ref{techniqueII}) can be used to characterize the failure of the Borsuk--Ulam property we started with in Remark \ref{ida}. For instance: 

	\begin{theorem}\cite[Theorem 2.4]{Acta} \label{th:borsuk_braid}
		Let $(X, x_1)$ be a pointed, pathwise-connected $CW$-complex, and suppose that there exists a proper free cellular left action 
		$\tau\colon\thinspace H \times  X \to X$, where $H$ is a finite group. Consider the Cayley monomorphism  $\iota:H \to S_H$ in (\ref{cayleytypemorphism}). Let $(Y, y_1)$ be a pointed surface, where $Y$ is either $\mathbb{R}^2$  or a compact surface without boundary different from $\mathbb{S}^2$ and $\mathbb{RP}^2$, and equip  $F_H(Y)$  and $D_H(Y)$ with respective base points $y_1'\in F_H(Y)$ and $p_H(y_1')\in D_H(Y)$ satisfying $y_1'(e)=y_1$.
		\begin{enumerate}
			\item Let $\alpha \in [X,x_1;Y,y_1]$ be a pointed homotopy class. Suppose that there exist homomorphisms 
			$\varphi\colon\thinspace  \pi_1 (X, x_1) \to P_H (Y)$ and $\psi\colon\thinspace \pi_1 (X/\tau, p_\tau(x_1)) \to B_{H}(Y)$ fitting in the commutative diagram 			
			\begin{equation}\label{eq:diag_borsuk_braid_prime1}\begin{gathered}\xymatrix{
						\pi_1 (X,x_1) \ar@{.>}[rr]^{\varphi} \ar[d]_{p_{\tau\#}} \ar@/^0.9cm/[rrrr]^{\alpha_\#} 
						&& P_H(Y) \ar[d]^-{p_{H\#}} \ar[rr]^{\text{ev}_{e\#}} && \pi_1(Y,y_1) \\
						\pi_1(X/\tau,p_\tau(x_1)) \ar@{.>}[rr]^{\psi} \ar[d]_{\theta_\tau} && B_H(Y) \ar[d]^{\theta_H} && \\
					H\ar[rr]^{\iota}_{\cong}	& & \iota(H).& &
			}\end{gathered}\end{equation}			 
Then $\alpha$  does not have the Borsuk--Ulam property with respect to the $H$-action $\tau$. Conversely, if $H$ is an Abelian group and $\alpha$ does not have the Borsuk--Ulam property with respect to $\tau$, then there exist homomorphisms 
$\varphi: \pi_1(X,x_1) \to P_H(Y)$ and   
$\psi: \pi_1(X/\tau,p_{\tau}(x_1)) \to B_H(Y)$ rendering a commutative diagram (\ref{eq:diag_borsuk_braid_prime1}).			
			\item Let  $\beta\in [X, Y]$ be a free homotopy class, and let $\alpha  \in [X, x_1; Y, y_1]$ be a pointed homotopy class in the preimage of $\beta$ under the natural map\footnote{See \cite[Chapter III, Section 1]{White} for  more details  
about this map.} $[X, x_1; Y, y_1] \to [X, Y]$ which sends a based homotopy class into the corresponding free homotopy class. If $\beta$ has the Borsuk--Ulam property with respect to 
$\tau$, then so does $\alpha$. Conversely, if $H$ is an Abelian group and $\alpha$ has the Borsuk--Ulam property with respect to $\tau$, then so does $\beta$.
		\end{enumerate}
	\end{theorem}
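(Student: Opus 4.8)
The plan is to prove Theorem \ref{th:borsuk_braid} in two parts, treating the based case (1) and then the free case (2), with the crux being the converse directions, where the Abelian hypothesis on $H$ is used.

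For part (1), the forward direction is essentially assembled from the lemmas already established. Given the commutative diagram (\ref{eq:diag_borsuk_braid_prime1}) with $\varphi$ and $\psi$, the strategy is to realize $\varphi$ and $\psi$ geometrically. Since $(X,x_1)$ is a connected $CW$-complex and the targets $P_H(Y)$, $B_H(Y)$ are fundamental groups of the aspherical configuration spaces $F_H(Y)$ and $D_H(Y)$ (which are $K(\pi,1)$'s precisely because $Y$ is $\mathbb{R}^2$ or a surface other than $\mathbb{S}^2,\mathbb{RP}^2$), obstruction theory lets me realize $\varphi$ by a map $F\colon X\to F_H(Y)$ inducing $\varphi$ on $\pi_1$. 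The bottom square of the diagram, together with the fact that $p_{\tau\#}$ and $p_{H\#}$ are the kernel inclusions of the short exact sequences, forces $F$ to be $H$-equivariant (in the sense of Lemma \ref{equivemb}) up to homotopy; the compatibility $\theta_H\circ\psi = \iota\circ\theta_\tau$ is exactly what is needed to descend $F$ to $\overline{F}\colon X/\tau\to D_H(Y)$ realizing $\psi$. Then Lemma \ref{reciproco} produces $f=\text{ev}_e\circ F$, and the commutativity $\text{ev}_{e\#}\circ\varphi=\alpha_\#$ (the upper outer triangle) guarantees $f$ lies in the class $\alpha$; by Lemma \ref{reciproco} again $f$ is injective on every $H$-orbit, so $\alpha$ fails the Borsuk--Ulam property. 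For the converse, I start from a representative $f\in\alpha$ injective on orbits, form $F_f$ via Lemma \ref{equivemb} and the diagram (\ref{techniqueII}) of Remark \ref{ida}, and set $\varphi=(F_f)_*$, $\psi=(\overline{F_f})_*$; these automatically fit the left and bottom of (\ref{eq:diag_borsuk_braid_prime1}), and (\ref{recupera}) gives the upper triangle. The role of the Abelian hypothesis here is to ensure that the based class $\alpha$ is well-defined and that the classifying homomorphism $\theta_\tau$ factors correctly through the Cayley embedding, matching $\iota(H)$ on the nose.

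For part (2), the implication ``$\beta$ has BU $\Rightarrow$ $\alpha$ has BU'' is immediate and does not need the Abelian hypothesis: a based representative of $\alpha$ is in particular a free representative of $\beta$, so if every free representative meets an orbit collision, then so does every based one. The converse, ``$\alpha$ has BU $\Rightarrow$ $\beta$ has BU'' assuming $H$ Abelian, is the genuinely delicate direction. Here I argue contrapositively: suppose $\beta$ fails BU, witnessed by a free representative $g\colon X\to Y$ injective on orbits. The difficulty is that $g$ need not be based, so I must first homotope $g$ to a based map $g'$ while \emph{preserving} orbit-injectivity, and then show $g'$ represents the specific based lift $\alpha$ rather than some other lift in the fiber over $\beta$. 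The fibers of $[X,x_1;Y,y_1]\to[X,Y]$ are the orbits of the $\pi_1(Y,y_1)$-action by change of basepoint, and the Abelian hypothesis on $H$ is what lets me absorb the basepoint-translation into the equivariant data without disturbing injectivity on orbits --- concretely, conjugating the equivariant map $F_g$ by a loop in $Y$ keeps it equivariant precisely because $\iota(H)$ is Abelian and commutes appropriately with the translation.

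The main obstacle I anticipate is this last point in part (2): controlling the interaction between the free-to-based passage and the equivariance/orbit-injectivity condition. Orbit-injectivity is not a homotopy-invariant property of $g$ itself, but the existence of an equivariant lift $F_g$ to $F_H(Y)$ is homotopy-invariant in the equivariant sense; the right framework is therefore to work entirely with equivariant maps $X\to F_H(Y)$ and their induced diagrams (\ref{techniqueII}), translating the basepoint ambiguity of $\beta$ into the choice of $\varphi$ within a $\pi_1(Y,y_1)$-conjugacy class, and then invoking commutativity of $H$ (hence of $\iota(H)$) to pin down the correct based class $\alpha$. I expect the surface hypothesis on $Y$ to enter only through asphericity of $F_H(Y)$ and $D_H(Y)$, which is precisely what makes the homotopy-theoretic realization and the $\pi_1$-level diagram equivalent.
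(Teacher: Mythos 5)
A preliminary point: the paper never proves this statement itself --- it is imported verbatim from \cite[Theorem 2.4]{Acta}, and the paper's only original contribution around it is the observation that its Lemmas \ref{equivemb} and \ref{reciproco} allow one to \emph{drop} the Abelianity hypotheses, yielding Theorem \ref{th:borsuk_braidI1}. So your attempt is reconstructing the cited proof, and most of your ingredients are the right ones: the converse of part (1) is exactly Remark \ref{ida} (note that it uses no Abelianity at all --- the ``role'' you invent for that hypothesis there, well-definedness of $\alpha$ and compatibility of $\theta_\tau$ with $\iota$, is spurious; the hypothesis is simply unused, which is precisely the paper's point), and the forward implication of part (2) is the triviality you state. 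However, your forward direction of part (1) has a gap of rigor: ``equivariant up to homotopy'' is useless here, since Lemma \ref{reciproco} requires a \emph{strictly} equivariant $F$ to produce an orbit-injective representative, and your argument is moreover circular --- you descend $F$ to $\overline{F}$ to realize $\psi$, but descending already presupposes strict equivariance. The correct order is the reverse: realize $\psi$ by a map $X/\tau\to D_H(Y)$ (asphericity of $D_H(Y)$), lift it through the covering $p_H$ using the middle square of (\ref{eq:diag_borsuk_braid_prime1}) as the lifting criterion, and then use the bottom square together with uniqueness of path lifting to conclude that the lift is equivariant on the nose; only then do Lemma \ref{reciproco}, the top triangle, and asphericity of $Y$ produce an orbit-injective representative of $\alpha$.

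The more serious gap is the converse of part (2), which you correctly single out as the delicate direction but then settle by assertion. ``Conjugating the equivariant map $F_g$ by a loop in $Y$'' is not a defined operation (a loop in $Y$ does not act on $F_H(Y)$), and Abelianity of $\iota(H)$ is not what makes basepoint translation compatible with equivariance. A proof needs something like the following. Suppose $\alpha'$ is a based class over $\beta$ failing the Borsuk--Ulam property, witnessed by a based, orbit-injective $f\in\alpha'$, and suppose $\alpha$ differs from $\alpha'$ by the change-of-basepoint action of $\gamma\in\pi_1(Y,y_1)$. Lift $\gamma$ through the Fadell--Neuwirth evaluation fibration $\text{ev}_e\colon F_H(Y)\to Y$ to a path $\Gamma$ with $\Gamma(0)=F_f(x_1)$; apply the homotopy extension property of the pair $(X/\tau,p_\tau(x_1))$ to $\overline{F_f}$ and the path $p_H\circ\Gamma$, and lift the resulting homotopy through the covering $p_H$ starting at $F_f$. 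Uniqueness of homotopy lifting forces every stage of the lifted homotopy to be equivariant (the two candidate lifts $(x,t)\mapsto G_t(h\cdot x)$ and $(x,t)\mapsto\iota(h)\star G_t(x)$ agree at $t=0$, hence everywhere), so the terminal map $F_1$ is equivariant, $F_1(x_1)(e)=y_1$, and $\text{ev}_e\circ F_1$ is a based, orbit-injective map lying exactly in the class $\alpha$. Without an argument of this kind the crucial direction of part (2) is not established; and once you write it down, you will see it nowhere uses that $H$ is Abelian --- which is exactly why the paper can state Theorem \ref{th:borsuk_braidI1} without that hypothesis.
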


A standard homotopy theoretic argument shows that  Theorem \ref{th:borsuk_braid}  holds true after replacing the third sentence in the paragraph preceding item 1 above  by the assumption that both $(Y, y_1)$ and  $(F_H(Y), z_1)$ are  pointed $K(\pi, 1)$  spaces.
 Furthermore, Lemmas \ref{equivemb} and \ref{reciproco} show that the  Abelianity hypothesis on  $H$  can be waived for the converse part. We thus get:   
	
	\begin{theorem} \label{th:borsuk_braidI1}
		Let $(X, x_1)$ be a pointed, pathwise-connected $CW$-complex, and suppose that there exists a proper free cellular left action 
		$\tau\colon\thinspace H \times  X \to X$, where $H$ is a finite group with Cayley embedding (\ref{cayleytypemorphism}).  Assume that both $(Y, y_1)$ and $(F_H(Y), y_1')$ are  pointed $K(\pi, 1)$ spaces with $y_1'(e)=y_1$. 		
		\begin{enumerate}
			\item
		Let 	$\alpha \in [X,x_1;Y,y_1]$ be a pointed homotopy class. The Borsuk--Ulam property with respect to $\tau$ fails for $\alpha$ if and only if there exist homomorphisms 
			$$\varphi\colon\thinspace  \pi_1 (X, x_1) \to P_H (Y) \quad\text{and}\quad\psi\colon\thinspace \pi_1 (X/\tau, p_\tau(x_1)) \to B_{H}(Y)$$ rendering a commutative diagram	(\ref{eq:diag_borsuk_braid_prime1}).		
						%
						%
						%
						%


			\item Let $\beta\in [X, Y]$ be a free homotopy class, and let $\alpha  \in [X, x_1; Y, y_1]$ be a preimage under the natural map $[X, x_1; Y, y_1]\to[X,Y]$. The  free homotopy  class  $\beta$ has the Borsuk--Ulam property with respect to $\tau$ 
			if and only if  
			 $\alpha$ has the Borsuk--Ulam property with respect to $\tau$.
		\end{enumerate}
	\end{theorem}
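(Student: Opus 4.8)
The plan is to deduce Theorem~\ref{th:borsuk_braidI1} from Theorem~\ref{th:borsuk_braid} by two independent upgrades: replacing the surface hypothesis on $Y$ with the hypothesis that $(Y,y_1)$ and $(F_H(Y),y_1')$ are aspherical, and removing the abelianity hypothesis on $H$ in the two converse implications. First I would address the asphericity upgrade. The forward directions of both items in Theorem~\ref{th:borsuk_braid} never used that $Y$ is a surface: they produce the algebraic diagram from a genuine map $F\colon X\to F_H(Y)$ via Remark~\ref{ida}, so they hold verbatim for any $Y$. The content is in the two converse directions, where one must realize an abstract commutative diagram of fundamental groups by an actual equivariant map. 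This is precisely where asphericity enters. When $(F_H(Y),y_1')$ is a $K(\pi,1)$, the homomorphism $\varphi\colon\pi_1(X,x_1)\to P_H(Y)=\pi_1(F_H(Y),y_1')$ is realized, up to based homotopy, by a unique based map $F\colon X\to F_H(Y)$ (using that $X$ is a CW-complex and the standard obstruction-theoretic bijection $[X,x_1;K(\pi,1)]\cong\Hom(\pi_1(X,x_1),\pi)$). This is exactly the ``standard homotopy theoretic argument'' the paragraph after Theorem~\ref{th:borsuk_braid} alludes to; I would invoke it to conclude that Theorem~\ref{th:borsuk_braid} remains valid with the surface hypothesis replaced by the asphericity of $(Y,y_1)$ and $(F_H(Y),y_1')$.

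Next I would remove the abelianity hypothesis from the converse of item~1. Here is where Lemmas~\ref{equivemb} and~\ref{reciproco} do the real work, and where the argument becomes cleaner than a black-box appeal to \cite{Acta}. Suppose $\alpha\in[X,x_1;Y,y_1]$ fails the Borsuk--Ulam property; pick a representative $f\in\alpha$ that is injective on every $H$-orbit. By the discussion preceding Lemma~\ref{equivemb}, the formula $F_f(x)(h)=f(hx)$ defines a map $F_f\colon X\to F_H(Y)$, which by Lemma~\ref{equivemb} is equivariant with respect to $\iota$, the $H$-action on $X$, and the $S_H$-action on $F_H(Y)$. Passing to quotients exactly as in Remark~\ref{ida} yields the topological square~(\ref{techniqueI}) and then, taking fundamental groups, the algebraic diagram~(\ref{techniqueII}) with $\varphi=F_{f*}$ and $\psi=\overline{F_f}_*$. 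It remains only to check that this diagram is the one required in~(\ref{eq:diag_borsuk_braid_prime1}), i.e.\ that the upper triangle commutes: $\mathrm{ev}_{e\#}\circ\varphi=\alpha_\#$. This is immediate from~(\ref{recupera}), since $\mathrm{ev}_e\circ F_f=f$ represents $\alpha$, so applying $\pi_1$ gives $\mathrm{ev}_{e\#}\circ F_{f*}=f_*=\alpha_\#$. Crucially, nowhere in this chain is $H$ assumed abelian: Lemmas~\ref{equivemb} and~\ref{reciproco} are stated and proved for an arbitrary finite group, which is the whole point of the generalization. This establishes the nontrivial (``only if'') direction of item~1; the ``if'' direction is the forward implication already available.

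For item~2, the strategy is to exploit the fibration-type relationship between based and free homotopy classes. The nontrivial direction to upgrade is the converse: assuming $\alpha$ has the Borsuk--Ulam property I must show $\beta$ does too, without abelianity. The cleanest route is contrapositive together with item~1: if $\beta$ fails the Borsuk--Ulam property, then some representative $f\in\beta$ is injective on every $H$-orbit; but then $f$ is also a representative of \emph{some} based class $\alpha'$ mapping to $\beta$, and by the argument for item~1 that $\alpha'$ fails the Borsuk--Ulam property. The remaining subtlety is that the given $\alpha$ may differ from $\alpha'$ among the various based preimages of $\beta$. I would resolve this using the observation, standard in this circle of ideas, that the Borsuk--Ulam property is a property of the free class and hence is shared by \emph{all} based preimages of a given $\beta$ (the forward direction of item~2 already gives one inclusion; the equivariant-map construction gives the other once one notes that $F_f$ depends only on $f$, and a free homotopy between representatives lifts to the configuration-space level). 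The main obstacle I anticipate is exactly this bookkeeping for item~2: making precise that passing between based preimages of a fixed free class does not affect the existence of an orbit-injective representative. I would handle it by the explicit $F_f$ construction above, which converts ``$f$ is orbit-injective'' directly into the equivariant data, bypassing any need to track base points through the free homotopy, since the equivariant condition of Lemma~\ref{equivemb} is manifestly base-point-free.
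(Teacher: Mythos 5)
Your overall route is exactly the paper's: the paper proves Theorem~\ref{th:borsuk_braidI1} in two sentences, by (i) invoking a ``standard homotopy theoretic argument'' to replace the surface hypothesis in Theorem~\ref{th:borsuk_braid} by asphericity of $(Y,y_1)$ and $(F_H(Y),y_1')$, and (ii) observing that Lemmas~\ref{equivemb} and~\ref{reciproco} waive the abelianity of $H$ in the converse directions. Your treatment of the implication ``Borsuk--Ulam fails for $\alpha$ $\Rightarrow$ diagram exists'' (via $F_f$, Remark~\ref{ida} and (\ref{recupera})) is precisely ingredient (ii), and it is correct and matches the paper's intent.

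The problem is in the place where you open up black box (i): the mechanism you describe would fail. You propose to realize $\varphi\colon\pi_1(X,x_1)\to P_H(Y)$ by a based map $F\colon X\to F_H(Y)$ via the $K(\pi,1)$ bijection $[X,x_1;F_H(Y),y_1']\cong\Hom(\pi_1(X,x_1),P_H(Y))$. Such an $F$ exists, but nothing makes it equivariant, and equivariance is exactly what Lemma~\ref{reciproco} requires in order to produce an orbit-injective representative of $\alpha$; asphericity by itself does not upgrade a realization of $\varphi$ to an equivariant map. The realization has to be performed on the quotient: realize $\psi$ by a based map $\overline F\colon X/\tau\to D_H(Y)$ (using that $D_H(Y)$ is aspherical, being covered by $F_H(Y)$); the bottom square of (\ref{eq:diag_borsuk_braid_prime1}) gives $\theta_H\circ\psi\circ p_{\tau\#}=\iota\circ\theta_\tau\circ p_{\tau\#}=1$, i.e.\ $\psi(p_{\tau\#}\pi_1(X,x_1))\subseteq\ker\theta_H=p_{H\#}P_H(Y)$, so $\overline F\circ p_\tau$ lifts through the covering $p_H$ to a map $F\colon X\to F_H(Y)$ with $F(x_1)=y_1'$; uniqueness of lifts together with the bottom square forces $F$ to be $\iota$-equivariant, the middle square gives $F_\ast=\varphi$, and the top triangle plus asphericity of $Y$ gives $\mathrm{ev}_e\circ F\in\alpha$, after which Lemma~\ref{reciproco} concludes. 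A second, smaller soft spot is item~2: your argument that the Borsuk--Ulam property ``is shared by all based preimages of a given $\beta$'' is circular (that assertion \emph{is} item~2), and the step ``a free homotopy between representatives lifts to the configuration-space level'' is unjustified --- $\mathrm{ev}_e\colon F_H(Y)\to Y$ need not be a fibration (it is not for graphs), so dragging the base point of an orbit-injective map cannot simply be lifted; it must instead be produced equivariantly (by deforming $\overline{F_f}$ inside $D_H(Y)$ and lifting, so that every stage stays orbit-injective by Lemma~\ref{reciproco}), and identifying which based preimage of $\beta$ one lands in is exactly the bookkeeping that still needs an argument.
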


Since graphs and their configuration spaces are aspherical, the following result is a specialization of Theorem  \ref{th:borsuk_braidI1}.

	
	\begin{corollary}\label{th:borsuk_braidIcoro}
		Let $(\Gamma, x_1)$,  $(G, y_1)$ be  pointed connected finite graphs and  $\tau\colon H \times  X \to X$ 
		a proper free cellular left action of a finite group $H$ with Cayley embedding (\ref{cayleytypemorphism}).
		\begin{enumerate}
			\item\label{it:borsuk_braid1} A pointed homotopy class $\alpha \in [\Gamma,x_1;G,y_1]$ does not have the Borsuk--Ulam property with respect to $\tau$ if and only if   there exist homomorphisms
			$$\varphi\colon\thinspace  \pi_1(\Gamma, x_1) \to P_{H} (G)\quad\text{and}\quad 
			\psi\colon\thinspace \pi_1 (\Gamma/\tau, p_\tau(x_1)) \to B_{H}(G)$$
		 that fit in the commutative diagram
	\begin{equation}\label{techniqueVIII}
	\xymatrix{
 & \pi_1(G,y_1) \\
\pi_1(\Gamma, x_1) \ar[r]^{\varphi} \ar[ur]^{\alpha_{\#}} \ar@{^{(}->}[d] & P_{H}(G) \ar[u]_{\text{ev}_{e\#}} \ar@{^{(}->}[d]\\
\pi_1(\Gamma/\tau, p_{\tau}(x_1)) \ar[r]^{\hspace{5.5mm}\psi} \ar@{>>}[d]_{\theta_{\tau}} & B_{H}(G) \ar@{>>}[d]^{\theta_{H}}\\
H \ar[r]^{\iota}_{\cong} & \iota(H).
}\end{equation}
			\item Let $\beta\in [\Gamma, G]$ be a free homotopy class, and let $\alpha  \in [\Gamma, x_1; G, y_1]$ be a pointed homotopy class 
			which is mapped to $\beta$.  
			The class $\beta$ has the Borsuk--Ulam property with respect to $\tau$ if and only if   $\alpha$ does. In particular, if  the based homotopy classes $\alpha_1, \alpha_2$ 
			are  mapped to the same free homotopy class, then $\alpha_1$ has the Borsuk--Ulam property with respect to $\tau$ if and only if $ \alpha_2$ does.
		\end{enumerate}
	\end{corollary}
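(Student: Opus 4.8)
The plan is to obtain the corollary as an immediate specialization of Theorem \ref{th:borsuk_braidI1}, the only real task being to check that the two asphericity hypotheses of that theorem are automatically satisfied in the graph setting and that the two diagrams match. First I would record the elementary fact that a finite connected graph is a $1$-dimensional $CW$-complex whose universal cover is a tree, hence contractible; thus $(\Gamma,x_1)$ is a pointed pathwise-connected $CW$-complex (as Theorem \ref{th:borsuk_braidI1} requires of the source) and $(G,y_1)$ is a pointed $K(\pi,1)$ space with $\pi_1(G,y_1)$ free. The action $\tau\colon H\times\Gamma\to\Gamma$ is proper, free and cellular by hypothesis, so the assumptions on the source space and on the action are met verbatim.

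Next I would supply the remaining asphericity input, namely that $(F_H(G),y_1')$ is a $K(\pi,1)$ space. Choosing any bijection $\{1,\dots,n\}\to H$ with $n=|H|$ gives a homeomorphism of $F_H(G)$ onto the ordered configuration space of $n$ points of $G$, and the latter is aspherical by the classical asphericity theorem for configuration spaces of graphs (Abrams, Ghrist), which is precisely the point invoked in the sentence preceding the corollary. Hence $(F_H(G),y_1')$ is a pointed $K(\pi,1)$ space, and one selects the base point so that $y_1'(e)=y_1$, meeting the normalization of Theorem \ref{th:borsuk_braidI1}.

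With both $(G,y_1)$ and $(F_H(G),y_1')$ confirmed to be $K(\pi,1)$, every hypothesis of Theorem \ref{th:borsuk_braidI1} holds, and part (1) of the corollary follows once diagram (\ref{techniqueVIII}) is identified with diagram (\ref{eq:diag_borsuk_braid_prime1}): these record the same data, since the commuting triangle $\text{ev}_{e\#}\circ\varphi=\alpha_\#$ at the top of (\ref{techniqueVIII}) is exactly the factorization of $\alpha_\#$ through $P_H(G)$ registered by the curved arrow of (\ref{eq:diag_borsuk_braid_prime1}), while the two lower squares are literally the same. Part (2) is a direct transcription of part (2) of Theorem \ref{th:borsuk_braidI1}; the concluding ``in particular'' clause is then immediate, because if $\alpha_1$ and $\alpha_2$ both map to $\beta$ then each has the Borsuk--Ulam property with respect to $\tau$ if and only if $\beta$ does. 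The only genuinely substantive ingredient here is the cited asphericity of graph configuration spaces; everything else is a verification of hypotheses and a relabeling of diagrams, so I expect no real obstacle beyond keeping the base points compatible under the homeomorphism identifying $F_H(G)$ with the standard configuration space.
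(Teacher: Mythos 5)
Your proposal is correct and follows essentially the same route as the paper, which likewise obtains the corollary as a direct specialization of Theorem \ref{th:borsuk_braidI1} by invoking the asphericity of graphs and of their configuration spaces. The additional details you supply (universal cover of a graph is a tree, Abrams--Ghrist asphericity of $F_n(G)$, base-point compatibility, and the matching of diagram (\ref{techniqueVIII}) with (\ref{eq:diag_borsuk_braid_prime1})) are exactly the verifications the paper leaves implicit.
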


\section{The case $H=\mathbb{Z}_n$\label{proofdetails}}
In this section we apply  the results above to study  the Borsuk--Ulam  property for the case where $X=\Gamma$ is a finite connected graph, $Y$ is either a tree, a cycle, or the connected graph with two vertices and two edges one of which is a loop, and $H=\mathbb{Z}_n$   with free action $\tau:\mathbb{Z}_{n}\times \Gamma \to \Gamma$.

\subsection{Classification of $\mathbb{Z}_n$ free actions on graphs}

Some of the considerations in this section can be put forward for general groups. Yet, in view of our main goals, we work exclusively with $H=\mathbb{Z}_n$.  As in the previous section, $\theta_{\mathbb{Z}_n}:\pi_1( \Gamma/\tau)  \to \mathbb{Z}_n$ stands for the morphism corresponding to the classifying map of the covering $p_\tau\colon \Gamma\to\Gamma/\tau$.
Recall that:

\begin{definition}\label{equivalencia} Two free actions $\tau_i:\mathbb{Z}_n\times \Gamma_{i} \to \Gamma_{i}$ ($i\in\{1,2\}$) are equivalent  if there is a homotopy equivalence $k:\Gamma_{1}/\tau_1\to \Gamma_{2}/\tau_2$ such that $\theta_{\mathbb{Z}_n,2}\circ k_{\#}=\theta_{\mathbb{Z}_n,1}$.
 \end{definition}
 
 Since the pullback of a principal $\mathbb{Z}_n$ bundle under a homotopy equivalence yields a $\mathbb{Z}_n$-equivariant homotopy equivalence between total spaces, Definition \ref{equivalencia} amounts to setting $\tau_1$ and $\tau_2$ to be equivalent whenever there is a $\mathbb{Z}_n$-equivariant homotopy equivalence $h\colon\Gamma_1\to\Gamma_2$. In particular, the Borsuk--Ulam property behaves well with respect to the corresponding classification of actions:
 
 \begin{corollary}
 Under the conditions above, let $\alpha_i$ ($i\in\{1,2\}$) be homotopy classes of maps from $\Gamma_i$ to $G$ corresponding under $h$. Then $\alpha_1$ satisfies the Borsuk--Ulam property with respect to $\tau_1$ if and only if $\alpha_2$ satisfies the Borsuk--Ulam property with respect to $\tau_2$.
 \end{corollary}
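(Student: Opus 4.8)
The plan is to reduce the statement to the elementary observation that the failure of the Borsuk--Ulam property is detected by the existence of an \emph{orbit-injective} representative, together with the fact that precomposition with an equivariant map preserves orbit-injectivity. First I would unwind the definition: a class $\alpha_i\in[\Gamma_i,G]$ fails to have the Borsuk--Ulam property with respect to $\tau_i$ precisely when it admits a representative $f_i\colon\Gamma_i\to G$ whose restriction to every $\mathbb{Z}_n$-orbit is injective, i.e.\ $f_i(\tau_i(g_1,x))\ne f_i(\tau_i(g_2,x))$ whenever $g_1\ne g_2$. This reformulation is basepoint-free, so I would argue throughout with the $\mathbb{Z}_n$-equivariant homotopy equivalence $h\colon\Gamma_1\to\Gamma_2$ furnished by the discussion preceding the corollary; saying that $\alpha_1$ and $\alpha_2$ correspond under $h$ means exactly that $f_2\in\alpha_2$ implies $f_2\circ h\in\alpha_1$.

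The key computation is that precomposition with an equivariant map preserves orbit-injectivity. Given $x\in\Gamma_1$ and $g_1\ne g_2$ in $\mathbb{Z}_n$, the equivariance relation $h(\tau_1(g,x))=\tau_2(g,h(x))$ gives
\begin{align*}
(f_2\circ h)(\tau_1(g_1,x))&=f_2(\tau_2(g_1,h(x))),\\
(f_2\circ h)(\tau_1(g_2,x))&=f_2(\tau_2(g_2,h(x))),
\end{align*}
and these two values are distinct as soon as $f_2$ is injective on the orbit of $h(x)$. Hence, if $\alpha_2$ fails the Borsuk--Ulam property, choosing an orbit-injective representative $f_2\in\alpha_2$ produces the orbit-injective representative $f_2\circ h\in\alpha_1$, so that $\alpha_1$ fails the Borsuk--Ulam property as well.

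For the reverse implication I would invoke that $h$, being an equivariant map between free $\mathbb{Z}_n$-CW complexes which is a homotopy equivalence, is in fact a $\mathbb{Z}_n$-equivariant homotopy equivalence by the equivariant Whitehead theorem; let $h'\colon\Gamma_2\to\Gamma_1$ be an equivariant homotopy inverse. Since $h\circ h'\simeq\mathrm{id}_{\Gamma_2}$, precomposition with $h'$ sends $\alpha_1$ back to $\alpha_2$, and the same computation shows that $h'$ carries an orbit-injective representative of $\alpha_1$ to one of $\alpha_2$. Taking contrapositives then yields the desired equivalence for the Borsuk--Ulam property itself, in both the based and free settings (which agree by item~2 of Corollary~\ref{th:borsuk_braidIcoro}).

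The step I expect to require the most care is the symmetry of the argument: one must ensure that $h$ admits a genuine equivariant homotopy inverse, rather than merely an equivariant equivalence in the opposite direction, which the symmetry of Definition~\ref{equivalencia} alone provides. This is precisely the content of the equivariant Whitehead theorem for free actions, already implicit in the remark identifying Definition~\ref{equivalencia} with the existence of a $\mathbb{Z}_n$-equivariant homotopy equivalence; once that inverse is in hand the two implications become formally symmetric and the proof concludes.
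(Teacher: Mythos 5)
Your proof is correct. The paper offers no argument for this corollary---it is stated as an immediate consequence of the preceding remark, which identifies equivalence of actions (Definition \ref{equivalencia}) with the existence of a $\mathbb{Z}_n$-equivariant homotopy equivalence $h\colon\Gamma_1\to\Gamma_2$---and your argument (failure of the Borsuk--Ulam property means existence of an orbit-injective representative, orbit-injectivity is preserved under precomposition with an equivariant map, and the equivariant Whitehead theorem, which applies trivially to free actions on $\mathbb{Z}_n$-CW complexes, supplies the equivariant homotopy inverse needed to make the argument symmetric) is exactly the natural filling-in of that omitted proof.
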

 
We next spell out the corresponding classification of free $\mathbb{Z}_n$ actions on a given graph.
 
 \begin{proposition}\label{spellout} If a graph $\Gamma$ admits a free action of $\mathbb{Z}_n$ then the Euler characteristic $\chi(\Gamma)$ is divisible  by $n$.
 Conversely, if $\chi(\Gamma)$ is divisible  by  $n$, then there is a graph $\Gamma'$ homotopy equivalent to $\Gamma$ and admitting a free $\mathbb{Z}_n$ action.  
 \end{proposition}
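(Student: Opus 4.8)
The plan is to prove the two directions separately, each resting on the relationship between the Euler characteristic of the total space and the base of the covering $p_\tau\colon\Gamma\to\Gamma/\tau$. For the forward direction, suppose $\Gamma$ admits a free $\mathbb{Z}_n$-action $\tau$. Since the action is free and cellular on a finite graph, the quotient map $p_\tau\colon\Gamma\to\Gamma/\tau$ is an $n$-fold covering of finite CW-complexes. Euler characteristic is multiplicative for finite covers, so $\chi(\Gamma)=n\cdot\chi(\Gamma/\tau)$, which exhibits $\chi(\Gamma)$ as divisible by $n$. The only point requiring care is that $\Gamma/\tau$ is again a finite graph (a one-dimensional CW-complex), which follows because a free cellular action of a finite group on a graph sends vertices to vertices and edges to edges without fixing any cell, so the quotient inherits a finite graph structure with $|V(\Gamma)|/n$ vertices and $|E(\Gamma)|/n$ edges.

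For the converse, assume $\chi(\Gamma)$ is divisible by $n$, say $\chi(\Gamma)=-k$ with $n\mid k$ (the case $\chi(\Gamma)\geq 0$, i.e.\ $\Gamma$ contractible or a circle, must be handled but is easy: a contractible graph has $\chi=1$, divisible by $n\geq 2$ only if we reinterpret, so realistically $\chi(\Gamma)\le 0$ for the interesting cases, and $\Gamma\simeq S^1$ gives $\chi=0$). Since $\Gamma$ is a connected graph it is homotopy equivalent to a wedge of $r$ circles where $r=1-\chi(\Gamma)$, i.e.\ $\Gamma\simeq\bigvee_r S^1$. The strategy is to build a connected graph $\Gamma'$ homotopy equivalent to this wedge that carries a free $\mathbb{Z}_n$-action. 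Equivalently, by the remark following Definition \ref{equivalencia}, it suffices to construct a connected $n$-fold covering space $\Gamma'\to B$ whose total space has the right homotopy type, where $B$ is a suitable finite graph; the covering structure automatically endows $\Gamma'$ with a free $\mathbb{Z}_n$-action via the deck transformations, provided the covering is regular with cyclic deck group.

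The cleanest construction is to choose the base $B$ to be a wedge of circles $\bigvee_s S^1$ and specify a surjective homomorphism $\rho\colon\pi_1(B)=F_s\to\mathbb{Z}_n$; the associated connected regular covering $\Gamma'\to B$ has deck group $\mathbb{Z}_n$ acting freely, and $\Gamma'$ is a finite connected graph. By multiplicativity of Euler characteristic, $\chi(\Gamma')=n\cdot\chi(B)=n(1-s)$. I would then solve $n(1-s)=\chi(\Gamma)$ for $s$: writing $\chi(\Gamma)=n\cdot q$, this forces $s=1-q$, which is a non-negative integer precisely because $q=\chi(\Gamma)/n\le 0$ in the cases of interest (and one checks the boundary values directly). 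Since both $\Gamma$ and $\Gamma'$ are connected graphs with equal Euler characteristic, they are homotopy equivalent (both being wedges of the same number of circles), completing the argument.

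The main obstacle I anticipate is verifying that the constructed covering $\Gamma'$ is genuinely connected and realizes the correct homotopy type, which hinges on choosing the homomorphism $\rho\colon F_s\to\mathbb{Z}_n$ to be surjective so that the covering is connected with full cyclic deck group. For $n\geq 2$ and $s\geq 1$ such a surjection always exists (send one generator to a generator of $\mathbb{Z}_n$). The only genuinely delicate point is the extremal case $\chi(\Gamma)=0$, where $q=0$ forces $s=1$, so $B=S^1$ and $\Gamma'$ is the connected $n$-fold cover of $S^1$, namely $S^1$ itself with the free rotation action of $\mathbb{Z}_n$; here one checks $\chi(\Gamma')=0=\chi(\Gamma)$ and $\Gamma'\simeq S^1\simeq\Gamma$, so the construction still goes through.
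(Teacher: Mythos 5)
Your proposal is correct and follows essentially the same route as the paper: the forward direction via multiplicativity of $\chi$ under the $n$-fold covering $\Gamma\to\Gamma/\tau$, and the converse by taking the connected regular $\mathbb{Z}_n$-cover (i.e.\ the cover corresponding to the kernel of a surjection $\pi_1(B)\to\mathbb{Z}_n$) of a wedge of $1-\chi(\Gamma)/n$ circles, then matching Euler characteristics to get the homotopy equivalence. Your extra care about connectedness of the cover and the bound $s\geq 1$ (forced by $\chi(\Gamma)\leq 1$ and divisibility) only makes explicit what the paper leaves implicit.
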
   
 \begin{proof}
 The first part of this result follows  because
  $\Gamma \to \Gamma/\mathbb{Z}_n$ is an $n$-fold covering, therefore we have 
   $\chi(\Gamma)=n \cdot \chi(\Gamma/\mathbb{Z}_n)$.  For the converse, let $\Gamma_1$ be any finite graph which has the homotopy type of the bouquet of 
$1-\chi(\Gamma)/n$ circles.  Since $\pi_1(\Gamma_1)$ is a free group of rank  $\geq 1$, there is a  surjective homomorphism  $\theta:\pi_1(\Gamma_1) \to \mathbb{Z}_n$. Let $\Gamma'$ be the covering of $\Gamma_1$ which corresponds to the $kernel$ of $\theta$.   Then, $\Gamma'$ is a finite graph where $\mathbb{Z}_n$ acts freely, it has the same Euler characteristic as $\Gamma$ (so the same homotopy type), and the result follows.
 \end{proof}
 
 The following algebraic lemma is the main ingredient in the determination of all the free $\mathbb{Z}_n$ actions. 
 Let $F(x_1,\ldots,x_r)$ be the free group of rank $r$ on a set $\{x_1,\ldots,x_r\}$. We will also write $F(r)=F(x_1,\ldots,x_r)$ when the base is not relevant.
 
 
 \begin{lemma}\label{th:Reibase} Given a surjective homomorphism $\theta:F(r) \to \mathbb{Z}_n$, there is a base $\{y_1,...,y_r\}$ of $F(r)$ with $\theta(y_1)$  a generator of $\mathbb{Z}_n$ and so that $\theta(y_i)=1$, the neutral element, for $1<i\leq r$.
 \end{lemma}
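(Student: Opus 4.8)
The plan is to exploit two standard facts about free groups. First, the elementary Nielsen transformations --- inverting a generator ($x_i\mapsto x_i^{-1}$), transposing two generators, and replacing $x_i$ by $x_ix_j^{\pm1}$ --- each carry a basis of $F(r)$ to another basis, and their composite is an automorphism of $F(r)$. Second, since $\mathbb{Z}_n$ is abelian, $\theta$ factors through the abelianization, so I may track the effect of such moves purely through the image tuple $(\theta(x_1),\dots,\theta(x_r))\in(\mathbb{Z}_n)^r$: under the three moves this tuple transforms, respectively, by negating a coordinate, transposing two coordinates, and adding one coordinate to (or subtracting it from) another, because $\theta(x_i^{-1})=-\theta(x_i)$ and $\theta(x_ix_j^{\pm1})=\theta(x_i)\pm\theta(x_j)$. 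The problem is thus reduced to the arithmetic statement that, starting from $(\theta(x_1),\dots,\theta(x_r))$, one can reach $(g,0,\dots,0)$ with $g$ a generator of $\mathbb{Z}_n$ using these operations; the desired basis $\{y_1,\dots,y_r\}$ is then the image of $\{x_1,\dots,x_r\}$ under the resulting automorphism.

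To run the reduction I would first lift to $\mathbb{Z}$. Choose integers $\tilde a_i$ with $\tilde a_i\equiv\theta(x_i)\pmod n$. Surjectivity of $\theta$ says exactly that the subgroup of $\mathbb{Z}_n$ generated by the $\theta(x_i)$ is all of $\mathbb{Z}_n$, which amounts to $\gcd(\tilde a_1,\dots,\tilde a_r,n)=1$. I then perform the integer Euclidean algorithm on $(\tilde a_1,\dots,\tilde a_r)$, carrying out every subtraction $\tilde a_i\mapsto\tilde a_i-\tilde a_j$, transposition, and sign change simultaneously on the free basis via the corresponding Nielsen transformation. Working pairwise --- reduce $(\tilde a_1,\tilde a_2)$ to $(\gcd(\tilde a_1,\tilde a_2),0)$, then fold in $\tilde a_3$, and so on --- terminates because the ordinary Euclidean algorithm over $\mathbb{Z}$ terminates, and it brings the lift tuple to $(d,0,\dots,0)$ with $d=\gcd(\tilde a_1,\dots,\tilde a_r)$. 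Reducing modulo $n$, the image tuple becomes $(d\bmod n,0,\dots,0)$.

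It remains to verify that $d\bmod n$ generates $\mathbb{Z}_n$. Indeed $\gcd(d,n)=\gcd(\tilde a_1,\dots,\tilde a_r,n)=1$, so $d\bmod n$ is a generator; taking $\{y_1,\dots,y_r\}$ to be the basis produced at the end of the procedure gives $\theta(y_1)=d\bmod n$ a generator and $\theta(y_i)=0$ for $1<i\le r$, as required. The case $n=1$ is trivial, and for $n\ge2$ surjectivity forces at least one $\tilde a_i\not\equiv0$, so the algorithm is non-vacuous.

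The only genuinely delicate point is the decision to carry out the reduction on integer \emph{lifts} rather than directly inside $\mathbb{Z}_n$: over $\mathbb{Z}$ the Euclidean algorithm is guaranteed to terminate and to output the honest gcd, whereas repeated subtraction in $\mathbb{Z}_n$ has no well-defined measure of decrease. Structurally this is the statement that every element of $GL_r(\mathbb{Z})$ is induced by an automorphism of $F(r)$ and that a mod-$n$ unimodular integer row can be cleared to $(d,0,\dots,0)$ by integer row operations; I nevertheless prefer the hands-on Euclidean formulation above, since it keeps the bookkeeping between the evolving free basis and its image tuple completely explicit. A routine alternative is induction on $r$, the inductive step being exactly one pairwise Euclidean reduction that zeroes out a further image coordinate.
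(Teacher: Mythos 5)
Your proof is correct, and it takes a genuinely different route from the paper's. You reduce the lemma to unimodular row reduction: lift the image tuple $(\theta(x_1),\dots,\theta(x_r))$ to integers, run the Euclidean algorithm, and realize each integer row operation by an elementary Nielsen transformation of the free basis, so that surjectivity enters only through $\gcd(\tilde a_1,\dots,\tilde a_r,n)=1$, which makes the final pivot $d$ a unit mod $n$. The paper instead works entirely inside $\mathbb{Z}_n$: it first treats $n=p^k$ a prime power, where the linear chain of subgroups of $\mathbb{Z}_{p^k}$ forces some $\theta(x_i)$ to be a generator, after which the one-shot substitution $y_1=x_1$, $y_i=x_ix_1^{-r_i}$ finishes; the general case is then an induction on the number of prime-power summands of $\mathbb{Z}_n$, splitting $\mathbb{Z}_n=\mathbb{Z}_{m_1}\oplus\mathbb{Z}_{m_2}$ according to which coordinates of $\theta(x_1)$ are generators and recombining generators of the two pieces by a Chinese-remainder-type argument (the elements $w_1$, $w_2$ in the paper). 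Your approach buys uniformity --- no case distinction on the prime factorization and no induction over summands --- plus a clean termination certificate from the Euclidean algorithm over $\mathbb{Z}$, which is exactly the delicate point you flag, since repeated subtraction inside $\mathbb{Z}_n$ admits no decreasing measure; it also makes transparent the structural fact that $\mathrm{Aut}(F(r))$ surjects onto the elementary integer row operations. The paper's approach stays self-contained within the arithmetic of cyclic groups and produces the new basis by explicit closed formulas at each stage, at the cost of noticeably heavier bookkeeping (the index sets $J$, $J'$ and the recombination step). Both arguments are complete.
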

 \begin{proof} We first show the result for the case where $n=p^ r$ for $p$ a  prime number. The subgroups of $\mathbb{Z}_{p^ r}$ are precisely 
  $$\{1\}\subset p^ {r-1}(\mathbb{Z}_{p^ r})\subset\cdots\subset p^i(\mathbb{Z}_{p^ r})\subset\cdots\subset p(\mathbb{Z}_{p^ r}) \subset \mathbb{Z}_{p^ r},$$
  \noindent as $i$ runs from $r$ to $0$. Since $\theta$ is  surjective, the image of  some $x_i$ is a generator of $\mathbb{Z}_{p^ r}$, for otherwise the image of  $\theta$ would be contained in  $p(\mathbb{Z}_{p^ r})$.   Without loss of generality we may assume that $\theta(x_1)$ generates $\mathbb{Z}_{p^ r}$. 
 Then, for $i>1$ set $\theta(x_i)=\theta(x_1)^{r_i}$ for some integer $r_i$. Let $y_1=x_1$ and $y_i =x_ix_1^ {-r_i}$  for  $2\leq  i\leq r$. This is a base of $F(x_1,\ldots, x_r)$ with the required properties.
  
Now let $n$ be arbitrary with prime decomposition     $n=p_1^{n_1}p_2^ {n_2}\cdots p_s^ {n_s}$. Pick an isomorphism 
$$\mathbb{Z}_n \cong \mathbb{Z}_{p_1^{n_1}} \oplus   \mathbb{Z}_{p_2^ {n_2}}\oplus\cdots\oplus    
\mathbb{Z}_{p_{s-1}^{n_{s-1}}}\oplus  \mathbb{Z}_{p_s^{n_s}}$$  and let 
$q_j:  \mathbb{Z}_{n} \to \mathbb{Z}_{p_j^ {n_j}}$ denote  
the  projection onto the $j$th coordinate. Our proof is by  induction on the number of summands $s$ of $\mathbb{Z}_n$, with the previous paragraph grounding the induction. So assuming the result valid for $s$, we prove it for $s+1$. 
Let $$\theta:F(x_1,\ldots, x_r) \to  \mathbb{Z}_{p_1^ {n_1}} \oplus   \mathbb{Z}_{p_2^ {n_2}}\oplus\cdots\oplus    \mathbb{Z}_{p_{s-1}^{n_{s-1}}}\oplus  \mathbb{Z}_{p_s^{n_s}} \oplus  \mathbb{Z}_{p_{s+1}^{n_{s+1}}}\cong\mathbb{Z}_n$$ 
be a surjective homomorphism. As in the first paragraph of this proof, the surjectivity of 
$q_1\circ\theta$ implies that
the $q_1\circ\theta$ image of some $x_i$ must generate $ \mathbb{Z}_{p_1^ {n_1}}$. For simplicity of notation and without loss of generality, let us assume that this holds for $i=1$. Consider the set
$$J:=\left\{j\in\{1,2,\ldots,s+1\} \colon q_j(\theta(x_1)) \  is \ a \  generator\   of  \   \mathbb{Z}_{p_j^{n_j}}\right\}.$$ Once again, for simplicity we can safely assume $J=\{1,2,\ldots,u\}$ with $1\leq u\leq s+1$, and we let $J'$ stand for the complement of $J$ in $\{1,2,\ldots,s+1\}$.
 So we have a decomposition 
 $\mathbb{Z}_n=\mathbb{Z}_{m_1}\oplus \mathbb{Z}_{m_2}$,    where $\mathbb{Z}_{m_1}$ is the sum of the terms $$\mathbb{Z}_{p_j^{n_j}}$$ as $j$ varies in $J$, and $\mathbb{Z}_{m_2}$ is the corresponding sum as $j$ varies in $J'$ (note that $\mathbb{Z}_{m_2}$ might be the trivial group). Let $q_J:\mathbb{Z}_n\to  \mathbb{Z}_{m_1}$ and $q_{J'}:\mathbb{Z}_n\to \mathbb{Z}_{m_2}$ denote the natural projections. 
   For each $2 \leq  k  \leq s+1$, let $r_{k}$ be an integer   
 satisfying  the equation  $q_J\circ \theta(x_k)=(q_J\circ \theta(x_1))^{r_{k}}$  for  $2\leq k\leq s+1$. Then the elements
  $y_1=x_1$ and $y_j=x_jx_1^ {-r_j}$ for $j=2,\ldots,s+1$ yield a base $\{y_1,\ldots,y_r\}$ for $F(x_1,\ldots,x_r)$ satisfying
\begin{equation}\label{incigu}  
\theta(F(y_2,\ldots,y_r))\subseteq \mathbb{Z}_{m_2}.
\end{equation}
If $\mathbb{Z}_{m_2}=0$, we are done; otherwise, the choosing of $J'$ and the argument in the first paragraph of this proof shows that (\ref{incigu}) is in fact an equality. Since the number of   prime summands of $\mathbb{Z}_{m_2}$ is at most $s$, we can apply the inductive hypothesis. Namely, there is a base $z_2,\ldots,z_{r}$ of $F(y_2,\ldots,y_r)$ such that $\theta(z_2)$ is a generator of $\mathbb{Z}_{m_2}$, while $\theta(z_l)=1$ for $l>2$. Set $z_1=y_1$ and choose an integer $r_0$ satisfying the equation $q_{J'}\circ\theta(z_1)=\theta(z_2)^ {r_0}$, so $q_{J'}\circ\theta(z_1z_2^{-r_0})=1$. Therefore the element $w_1=z_1z_2^{-r_0}z_2$ has the property that $q_J\circ\theta(w_1)$ and $q_{J'}\circ\theta(w_1)$ generate $\mathbb{Z}_{m_1}$ and $\mathbb{Z}_{m_2}$, respectively. So $\theta(w_1)$ generates
 $\mathbb{Z}_n$. Let $r_0'$  be  an integer defined by the equation  $q_{J'}\circ\theta(z_2)=\theta(w_1)^{r_0'}$, and define $w_2=z_2w_1^ {-r_0}$. Then $\theta(w_2)=1$ and $w_1,w_2,z_3,....,z_r$ show yield the required base.  
    \end{proof} 

We close the section with a description of a base for the \emph{kernel} of $\theta$ in terms of the  base $\{y_1,...,y_r\}$ of $F(r)$ given by Lemma \ref{th:Reibase}.

\begin{lemma}\label{generatorsviaranks} The set 
$\{ \hspace{.5mm}y_1^n, \hspace{.3mm}y_1^{i}y_jy_1^{-i}  \ | \ i=0,1,\ldots,n-1  \text{ and } j=2,\ldots,r \}$ is a base of 
\hspace{.3mm}\emph{ker}$(\theta)$.
\end{lemma}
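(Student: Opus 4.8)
The plan is to apply the Reidemeister--Schreier rewriting process to the finite-index subgroup $\ker(\theta)\le F(r)$, using a Schreier transversal tailored to the base $\{y_1,\ldots,y_r\}$ supplied by Lemma \ref{th:Reibase}. Since $\theta$ is surjective, $\ker(\theta)$ has index $n$ in $F(r)$, so the Nielsen--Schreier index formula gives that $\ker(\theta)$ is free of rank $1+n(r-1)=nr-n+1$. This matches exactly the cardinality of the proposed set, which consists of the single element $y_1^n$ together with the $n(r-1)$ elements $y_1^i y_j y_1^{-i}$; this coincidence is a first reassuring sanity check, and indeed the eventual argument will produce precisely these generators.

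First I would exhibit the transversal. Because $\theta(y_1)$ generates $\mathbb{Z}_n$, the powers $\theta(y_1)^i$ for $i=0,1,\ldots,n-1$ exhaust $\mathbb{Z}_n$ without repetition, so $T:=\{1,y_1,y_1^2,\ldots,y_1^{n-1}\}$ is a complete set of coset representatives for $\ker(\theta)$. Moreover $T$ is Schreier, since every prefix of the reduced word $y_1^i$ is again some $y_1^{i'}$ with $i'\le i$ and hence lies in $T$. As $\ker(\theta)$ is normal there is no ambiguity between left and right cosets, so I need not belabor the side on which the transversal acts; for definiteness I write $\overline{g}\in T$ for the representative determined by $\theta(\overline{g})=\theta(g)$.

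Next I would compute the Reidemeister--Schreier generators $\gamma(t,x)=tx\,\overline{tx}^{-1}$, with $t$ ranging over $T$ and $x$ over $\{y_1,\ldots,y_r\}$. For $x=y_1$ and $0\le i\le n-2$ one has $\overline{y_1^{i}y_1}=y_1^{i+1}$, so $\gamma(y_1^i,y_1)=1$; the single wrap-around case $i=n-1$ uses $\theta(y_1^n)=\theta(y_1)^n=1$ to give $\overline{y_1^n}=1$, whence $\gamma(y_1^{n-1},y_1)=y_1^n$. For $x=y_j$ with $j\ge 2$, the relation $\theta(y_j)=1$ forces $\overline{y_1^i y_j}=y_1^i$, so $\gamma(y_1^i,y_j)=y_1^i y_j y_1^{-i}$, which is nontrivial in the free group $F(r)$. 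Collecting the nontrivial $\gamma(t,x)$ recovers exactly the set in the statement, and the Reidemeister--Schreier theorem guarantees that this set is a free basis of $\ker(\theta)$.

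The only genuinely delicate point is the bookkeeping of the representatives $\overline{tx}$---in particular confirming that the isolated wrap-around at $i=n-1$ produces $y_1^n$ while all the other $\gamma(y_1^i,y_1)$ collapse to the identity; everything else is a mechanical unwinding of the definitions. As an alternative to invoking the freeness clause of Reidemeister--Schreier, one could verify only that the listed elements generate $\ker(\theta)$ and then appeal to the Hopfian property of finitely generated free groups: a generating set whose cardinality equals the rank of a free group is automatically a basis, and the rank count above shows the two numbers agree.
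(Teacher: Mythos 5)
Your proof is correct and takes the same route as the paper, whose proof consists precisely of the remark that the statement "follows in a straightforward manner from the Reidemeister--Schreier method." You have simply supplied the details the paper omits: the Schreier transversal $\{1,y_1,\ldots,y_1^{n-1}\}$, the computation of the generators $\gamma(t,x)$, and the rank count $1+n(r-1)$, all of which are accurate.
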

\begin{proof} This follows in a straightforward manner from the Reidemeister-Schreier method.
\end{proof}

\subsection{Trees and cycles}\label{ciclosyarboles}

Since the connected graph $\Gamma$ admits a free $\mathbb{Z}_n$ action (which we denote by $\tau$), Proposition \ref{spellout} gives $\chi(\Gamma)=-nm\leq0$ with $m\geq0$. In these conditions the rank of  the free group $\pi_1(\Gamma)$ is forced to be $nm+1$. In particular, {making use of a very specific base of $\pi_1(\Gamma)$ (referred to as ``a suitable identification'' in Theorem \ref{thirdmain} below), we obtain an explicit identification of the (either based or unbased) homotopy classes $[\Gamma,S^1]$ with $\mathbb{Z}^{nm+1}$.} 
In such terms, the following result gives a complete answer of the Borsuk-Ulam property for triples $(\Gamma,\tau_{\mathbb{Z}_n},G)$ and homotopy classes from $\Gamma$ to $G$ when $G$ is either a tree or a cycle (so $H=\mathbb{Z}_n$, the cardinality $n$ cyclic group).  

\begin{theorem}\label{thirdmain}
\begin{itemize}
\item[a)] If $G$ is homeomorphic to the interval $I$, then the Borsuk-Ulam property holds.
\item[b)] If $G$ is a tree not homeomorphic to the interval  $I$, then the Borsuk-Ulam property never holds.
\item[c)] If $G$ is homeomorphic to a circle $S^1$, then the Borsuk--Ulam property holds for most of the homotopy classes in $[\Gamma,S^1]$. Explicitly, under a suitable identification of $[\Gamma,S^1]$ with $\mathbb{Z}^{nm+1}$, where  $m+1$ is the rank of the free group  $\pi_1(\Gamma/\tau)$, the homotopy classes of maps $\Gamma\to S^1$ for which the Borsuk--Ulam property fails are precisely the $(nm+1)$-tuples
$$(p,p_1,p_1,\ldots,p_1,p_2,p_2,\ldots,p_2,\ldots,p_m,p_m,\ldots,p_m)$$
with $p$ congruent to $1$ mod n (and $p_1,\ldots,p_m$ arbitrary).
\end{itemize}
\end{theorem}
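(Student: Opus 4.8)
The plan is to run all three parts through the braid-group criterion of Corollary~\ref{th:borsuk_braidIcoro}: a based class $\alpha$ fails the Borsuk--Ulam property precisely when the triangle--square diagram~(\ref{techniqueVIII}) can be filled in by homomorphisms $\varphi,\psi$. Since $\pi_1(\Gamma/\tau)$ is free (Proposition~\ref{spellout} forces $\chi(\Gamma)=-nm$, so its rank is $m+1$), producing $\psi$ is unobstructed as a problem of choosing images of free generators; the real content is that $\psi$ must lift $\theta_\tau$ along $\theta_H\colon B_H(G)\to\iota(H)$ while its restriction to $\pi_1(\Gamma)$ factors through $\mathrm{ev}_{e\#}$ as the prescribed $\alpha_\#$. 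Thus in every case the problem reduces to understanding the two covering-type exact sequences attached to $p_\tau$ and $p_H$, i.e.\ to computing $\pi_1\big(F_H(G)\big)$ and $\pi_1\big(D_H(G)\big)$ together with the maps $\mathrm{ev}_{e\#}$, $p_{H\#}$ and $\theta_H$.

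For the tree cases (a) and (b) the target is contractible, so there is a single homotopy class and the evaluation triangle in~(\ref{techniqueVIII}) is automatically trivial; the diagram is then solvable if and only if $\theta_\tau$ admits a homomorphic lift along $\theta_H$. As $\pi_1(\Gamma/\tau)$ is free, such a lift exists if and only if $\theta_H$ is surjective, which happens if and only if $F_H(G)$ is connected. When $G\cong I$ the space $F_H(I)$ is a disjoint union of $|H|!$ contractible cells freely permuted by $\iota(H)$, so $\theta_H$ is not onto, no lift exists, and the property holds (part (a)); this same component obstruction reproves that a connected $\Gamma$ admits no $\iota(H)$-equivariant map into $F_H(I)$. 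When $G$ is a tree with a vertex of degree $\ge 3$, $F_H(G)$ is connected by Remark~\ref{notaninterval}, hence $\theta_H$ is onto, a lift exists, and the property fails for the unique class (part (b)).

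For the circle (part (c)) I would first fix the identification $[\Gamma,S^1]\cong\Hom(\pi_1(\Gamma),\mathbb{Z})\cong\mathbb{Z}^{nm+1}$ using the Reidemeister--Schreier basis $\{y_1^n\}\cup\{y_1^iy_jy_1^{-i}\}$ of $\ker\theta_\tau=\pi_1(\Gamma)$ from Lemmas~\ref{th:Reibase} and~\ref{generatorsviaranks}, writing $p:=\alpha_\#(y_1^n)$ for the distinguished coordinate. The geometric input is a direct model of $F_H(S^1)=\C_n(S^1)$: it retracts onto the evenly spaced configurations, whose components are indexed by the cyclic orderings of $H$, and the monotone component $C$ is an $\iota(H)$-invariant circle on which $\iota(H)$ acts freely by rotations. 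On $C$ one reads off that $\mathrm{ev}_{e\#}\colon\pi_1(C)\xrightarrow{\ \cong\ }\pi_1(S^1)$ is an isomorphism, $p_{H\#}$ is multiplication by $n$, and $\theta_H$ is reduction modulo $n$. Substituting these into~(\ref{techniqueVIII}) turns the filling problem into: find $\psi\colon\pi_1(\Gamma/\tau)\to\mathbb{Z}$ with $\psi\equiv\theta_\tau\pmod n$ and $n\,\alpha_\#=\psi|_{\pi_1(\Gamma)}$. Evaluating on the basis, $y_1^n$ forces a congruence on $p$ (the monotone component, under the suitable identification, giving $p\equiv 1\pmod n$), while each family $y_1^iy_jy_1^{-i}$, $0\le i\le n-1$, forces $\alpha_\#$ to be constant in $i$, which is exactly the condition that the $j$-th block of the tuple be a repeated entry $p_{j-1}$. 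This pins down the failing classes as the tuples displayed in the statement.

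The main obstacle is the circle computation, and specifically the disconnectedness of $F_H(S^1)$ for $n\ge 3$, a phenomenon absent from the aspherical surface setting of~\cite{Acta} on which Corollary~\ref{th:borsuk_braidIcoro} is modeled. I expect the delicate step to be the careful bookkeeping of the $\iota(H)$-action on the set of cyclic-ordering components and the identification of which component is compatible with the chosen base point $y_1'$, since it is precisely this choice that fixes the residue of $p$ modulo $n$ through $\theta_H$; getting $\theta_H$, and hence the congruence, right is where the argument must be most careful. The remaining inputs --- freeness of $\pi_1(\Gamma/\tau)$, the explicit homology basis of Lemma~\ref{generatorsviaranks}, and the evenly spaced retraction of $\C_n(S^1)$ --- are routine, and discrete Morse theory (Farley--Sabalka) enters only as a general tool for controlling graph configuration spaces rather than for the one-dimensional target $S^1$ treated here.
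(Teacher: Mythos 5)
Your treatment of parts (a) and (b) is essentially the paper's own proof: (a) rests on $F_n(I)$ being $n!$ contractible components freely permuted by $\iota(\mathbb{Z}_n)$ (the paper phrases the obstruction as non-existence of an equivariant map via Lemma \ref{equivemb}, you phrase it as non-surjectivity of $\theta_H$; these are the same thing, provided you note the obstruction holds for \emph{every} choice of base component, which it does since all stabilizers are trivial), and (b) rests on connectivity of $F_n(G)$ for a tree with an essential vertex, freeness of $\pi_1(\Gamma/\tau)$ to produce $\psi$, and exactness of the right-hand column to produce $\varphi$. One caution: your asserted equivalence ``$\theta_H$ is surjective if and only if $F_H(G)$ is connected'' is false in general --- for $n\geq 3$ the space $F_n(S^1)$ is disconnected yet $\theta_H$ based in an invariant component is onto --- but the two implications you actually use in (a) and (b) are the correct ones.

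In part (c), however, the step you yourself flag as ``where the argument must be most careful'' is a genuine gap, and it cannot be closed in the form you (and the paper) state it. The direction ``failure of Borsuk--Ulam $\Rightarrow$ diagram (\ref{techniqueVIII})'' produces a diagram based at the component of $F_n(S^1)$ containing $F_f(x_1)$, and you have no right to assume this is the monotone component $C$: the $\iota(\mathbb{Z}_n)$-invariant components of $F_n(S^1)$ are exactly those whose cyclic ordering is an arithmetic progression of step $k$ with $\gcd(k,n)=1$, so there are $\phi(n)$ of them, and on the component of step $k$ the generator $\iota(1)$ acts by rotation by $2\pi k^{-1}/n$, so the bottom square of the diagram forces $p\equiv k^{-1}\pmod n$ rather than $p\equiv 1\pmod n$. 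Taking the union over all invariant components, the classes that fail the Borsuk--Ulam property are the tuples with repeated blocks and $\gcd(p,n)=1$, not only those with $p\equiv 1\pmod n$. This is not a hypothetical worry: for $\Gamma=S^1$ with the rotation $\mathbb{Z}_3$-action (so $m=0$), the map $z\mapsto z^2$ sends the orbit $\{z,\omega z,\omega^2 z\}$ to the three distinct points $z^2,\omega^2z^2,\omega z^2$, hence is injective on every orbit, so the class $p=2$ fails the Borsuk--Ulam property although $2\not\equiv 1\pmod 3$; its induced configuration map lands in the non-monotone invariant component. Your argument (restricted to $C$) therefore proves only the ``if'' half --- tuples with $p\equiv 1\pmod n$ and repeated blocks do fail the property --- while the ``only if'' half breaks down.

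You should be aware that this gap is shared by the paper's own proof, which likewise fixes a single component $F_n(S^1)_0$ and implicitly identifies $\theta_2'$ with reduction mod $n$ compatibly with $\theta_\tau$; equivalently, the criterion of Corollary \ref{th:borsuk_braidIcoro} is being applied with a fixed base point $y_1'$ although its ``only if'' direction requires either connectivity of $F_H(G)$ or quantification over all invariant components. The discrepancy is invisible for $n=2$ (Theorem \ref{maintheoremII}), where $\phi(2)=1$ and ``$p$ odd'', ``$\gcd(p,2)=1$'' and ``$p\equiv 1\pmod 2$'' all coincide, but for $n\geq 3$ the correct statement of part (c) must read $\gcd(p,n)=1$ in place of $p\equiv 1\pmod n$.
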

\begin{proof} \emph{a)} The configuration space $F_{n}(I)$ is the disjoint union of $n!$ open simplexes $\Delta_n$. Furthermore, any non-trivial permutation $\sigma\in  S_n$ has $\sigma(\Delta_n)\cap \Delta_n=\varnothing$ (see \cite{We}). Since  $\Gamma$ is connected, no $\mathbb{Z}_n$-equivariant  map $F:\Gamma \to  F_n(I)$ can exist, and the result follows directly from Lemma \ref{equivemb}.

\smallskip\noindent\emph{b)} {Under the current conditions there is a single} homotopy class of maps $\Gamma \to  G$. {By Corollary \ref{th:borsuk_braidIcoro}, it suffices to show that the corresponding (trivial) morphism induced at level of fundamental groups fits into a commutative diagram (\ref{techniqueVIII}).} The bottom {commutative square} of {(\ref{techniqueVIII})} admits {a} solution $\psi$, since $\pi_1(\Gamma/\tau,p_\tau(x_1))$ is a free group. {The middle commutative square of (\ref{techniqueVIII}) then admits a solution $\varphi$ because} the configuration space $F_{n}(G)$ is connected {(see for instance \cite[Theorem 2.7]{MR2701024}), so the two vertical downward maps on the right hand-side of (\ref{techniqueVIII}) assemble} a short exact sequence       
$$1 \to P_n(G) \to \pi_1(F_n(G)/\mathbb{Z}_n) \to  \mathbb{Z}_n \to  1.$$
{The top commutative triangle of (\ref{techniqueVIII}) holds trivially.}

\smallskip\noindent\emph{c)}  {In order to show the result we will use some information about the configuration space 
$F_n(S^1)$ as well as the action of $\mathbb{Z}_n\subset S_n$ on  $F_n(S^1)$ (see \cite{We}  for details). 
Choose a connected component $F_n(S^1)_{0}$ of $F_n(S^1)$, so we have $\pi_1(F_n(S^1)_0)\cong \mathbb{Z}$.
The subgroup $\mathbb{Z}_n$ acts on $F_n(S^1)_{0}$ with  $\pi_1(F_n(S^1)_{0}/\mathbb{Z}_n)\cong \mathbb{Z}$, and the homomorphism $\pi_1(F_n(S^1)_{0}) \to \pi_1(F_n(S^1)_{0}/\mathbb{Z}_n)$ is multiplication by~$n$.
Next, choose a base $y_1,\ldots,y_{m+1}$ of $\pi_1(\Gamma/\mathbb{Z}_n)$ as the one given by Lemma \ref{th:Reibase} for $\theta=\theta_\tau$, the epimorphism that corresponds to the classifying map for the cover $\Gamma\to \Gamma/\tau$, and choose the corresponding base of $\pi_1(\Gamma)$ given by Lemma \ref{generatorsviaranks}. Namely, the base consisting of the elements $e_1=y_1^n$ and $e_{i+1,\hspace{.3mm}j}=y_1^{i}y_jy_1^{-i}$, for $i=0,1,\ldots,n-1$ and $j=2,\ldots,m+1$. 
Consider now the commutative diagram
	\begin{equation}\label{techniqueV}\xymatrix{
 && \mathbb{Z} \\
\pi_1(\Gamma) \ar[rr]^{\varphi} \ar[urr]^{\alpha_{\#}} \ar@{^{(}->}[d] && \mathbb{Z} \ar[u]_{(p_1)_{\#}} \ar@{^{(}->}[d]^{\times n}\\
\pi_1(\Gamma/\tau) \ar@{.>}[rr]^{\psi} \ar[rd]_{\theta_\tau} && \mathbb{Z}  \ar[ld]^{\theta_2'} \\
& \mathbb{Z}_n&
}\end{equation}
where $\theta'_2$ corresponds to the classifying map for the covering $F_n(S^1)_0\to F_n(S^1)_0/\mathbb{Z}_n$. In order to prove the result we look for all possible morphisms $\psi:  \pi_1(\Gamma/\tau) \to  \mathbb{Z} $ rendering a commutative lower triangle in (\ref{techniqueV}). We can choose, for each $y_j$ with $m+1\geq j>1$, an arbitrary integer of  the form  $nk$ for arbitrary $k\in \mathbb{Z}$, so an arbitrary element  of  
$\pi_1(F_n(S^1)_0)\cong\mathbb{Z}$.  But   for a fixed $i$, the elements $e_{1,i},\ldots,e_{n,i}$ are mapped  to the same element, because the way how the $e_{n,i}$ were choosen.
 On the other hand $y_1$ must be mapped to an arbitrary integer $d$ with $d\equiv 1$ $mod(n)$.
 So the Borsuk-Ulam property does not hold if and only if the elements of the base $\{e_1, e_{1,1},\ldots,e_{n,1},\ldots, e_{1,m},\ldots,e_{n,m}\}$ are sent to the elements of $\mathbb{Z}$ of the form as stated in the lemma, and the result follows.}
\end{proof}

\section{The graph $S^1\vee I$}\label{swedgei}
We now address the Borsuk--Ulam property when the target graph has underlying topological space $S^1\vee I$. Here $I=[0,1]$, the unit interval with base point $0$. The two fundamental groups $P_{\mathbb{Z}_m}(S^1\vee I)$ and $B_{\mathbb{Z}_m}(S^1\vee I)$ needed in (\ref{techniqueVIII}) will be assessed through the use of discrete Morse Theory. Indeed, we use Farley-Sabalka's discrete gradient fields on Abrams' discrete homotopical models for the configuration space $F_m(S^1\vee I)$ and its quotient by the free $\mathbb{Z}_m$ action discussed in Section \ref{caracterizacion}.

\smallskip
Assuming the reader is familiar with Forman's discrete Morse theory \cite{MR1358614}, the section starts by reviewing the use of such techniques for assessing the graph braid groups we need. The foundations of the method can be found in Sections 2 and 3 of \cite{MR2171804}.

\subsection{Farley-Sabalka gradient field on Abrams homotopy model}\label{sectiononDMT}
For a graph $G$ with cells $e$ (vertices and edges), Abrams' discrete configuration space $DF_m(G)$ of $m$ non-colliding labeled cells is the cubical subcomplex of $G^m$ consisting of the cells $e_1\times\cdots\times e_m$ satisfying $\overline{e_i}\cap\overline{e_i}\neq\varnothing$ for $i\neq j$. We use the standard notation $(e_1,\ldots,e_m):=e_1\times\cdots\times e_m$. The obvious inclusion $DF_m(G)\hookrightarrow F_m(G)$ is a $\Sigma_m$-equivariant\footnote{The $\mathbb{Z}_m$ action, detailed in Section \ref{caracterizacion}, is spelled out in (\ref{leftaction}) below.} homotopy equivalence provided $G$ is $m$-sufficiently subdivided, i.e., when \emph{(i)} every path between two essential vertices $u\neq v$ touches at least $m$ vertices (including $u$ and $v$), and \emph{(ii)} every essential cycle (i.e.~one that is not homotopically trivial) at a vertex $u$ touches at least $m+1$ vertices (including $u$). Thus, under such conditions, we can ---and will--- use $DF_m(G)$ and $DF_m(G)/\mathbb{Z}_m$ as homotopy replacements for $F_m(G)$ and $F_m(G)/\mathbb{Z}_m$, respectively.

\smallskip
In what follows, $G$ stands for the $m$-sufficiently subdivided version of $S^1\vee I$ shown in (\ref{thegraph}) below, which has $2m$ vertices labeled from 0 to $2m-1$. The special edge between vertices $m-1$ and $2m-1$ is denoted by $a$. Let $T$ be the maximal tree of $G$ obtained by removal of the interior of $a$. The edge of $T$ joining vertices $i$ and $i-1$ ($i\in\{1,2,\ldots,2m-1\}$) will be denoted by $a_i$ and assigned the ordinal $\text{ord}(a_i)=i$, i.e., the ordinal of vertex $i$. By assigning the ordinal $\infty$ to $a$, we set a partial order on the cells of $G$, which restricts to linear orders both on vertices as well as on edges. Note that the $m$ ordinals of the coordinates of a cell of $DF_m(G)$ are pairwise different.
\begin{equation}\label{thegraph}
\begin{picture}(50,107)
\put(0,50){\line(1,0){135}}
\multiput(-3,47)(15,0){10}{$\bullet$}
\multiput(-3,47)(0,14){3}{$\bullet$}
\multiput(-3,47)(0,-14){3}{$\bullet$}
\multiput(-92.5,75)(0,-14){5}{$\bullet$}
\multiput(-76,92)(14,0){5}{$\bullet$}
\multiput(-76,3)(14,0){5}{$\bullet$}
\put(-86.4,7.3){$\bullet$}
\put(-86.6,86.6){$\bullet$}
\put(-9,86.6){$\bullet$}
\put(-9,7.5){$\bullet$}
\put(-45,50){\oval(90,90)[t]}
\put(-45,50){\oval(90,90)[b]}
\put(-21,47.5){\tiny$m{-}1$}
\put(133,40){\tiny$0$}
\put(117.5,40){\tiny$1$}
\put(102.9,40){\tiny$2$}
\put(87.7,40){\tiny$3$}
\put(35,40){\tiny$\cdots$}
\put(46,40){\tiny$\cdots$}
\put(-12,34){\tiny$m$}
\put(-21,20){\tiny$m{+}1$}
\put(-2,6){\tiny$m{+}2$}
\put(-55,-4){\tiny$\cdots$}
\put(-45,-4){\tiny$\cdots$}
\put(-55,102){\tiny$\cdots$}
\put(-45,102){\tiny$\cdots$}
\put(-104,50){\tiny$\vdots$}
\put(-104,38){\tiny$\vdots$}
\put(3.5,64){\tiny$2m{-}1$}
\put(3.5,79){\tiny$2m{-}2$}
\end{picture}
\end{equation}

\smallskip
Having fixed the graph $G$ and in order to simplify notation, $F_m$ and $F_m/\mathbb{Z}_m$ will stand, respectively, for Abrams' discrete homotopy model $DF_m(G)$ and its $\mathbb{Z}_m$ quotient $DF_m(G)/\mathbb{Z}_m$. Many of the considerations below apply both to $F_m$ and $F_m/\mathbb{Z}_m$ and, in such cases, we will simply write $A_m$ in order to refer to either $F_m$ or $F_m/\mathbb{Z}_m$.

\smallskip
Farley-Sabalka's gradient field $W_{FS}$ on $F_m$ is based on the concept of blocked vertices. Namely, a vertex coordinate $i=c_r$ of a cell $c=(c_1,\ldots,c_m)$ of $F_m$ is said to be blocked in $c$ if either $i=0$ or, else, $i-1$ intersects the closure of some $c_s$ with $s\neq r$. Otherwise $i$ is said to be unblocked in $c$. In the latter case, replacing the $r$-th coordinate $i$ of $c$ by the edge $a_i$ renders a cell $W_r(c)$ of $F_m$ having $c$ as a facet. With this preparation, a cell $c=(c_1,\ldots,c_m)$ of $F_m$ is:
\begin{itemize}
\item Critical provided all its vertex coordinates are blocked, and the only possible edge coordinate is $a$.
\item Redundant provided $c$ has an unblocked vertex coordinate $i=c_r$ so that no edge coordinate of $c$ has ordinal smaller than $i$. If $i$ is minimal with the property above, then the $W_{FS}$-pair of $c$ is set to be $W_r(c)$. Following standard notation: $c\nearrow W_r(c)$.
\item Collapsible provided $c$ has an edge coordinate $a_i=c_r$ so that no vertex coordinate of $c$ smaller than $i$ is unblocked in $c$. If $i$ is minimal with the property above, then the $W_{FS}$-pair of c is set to be $w_r(c)$, where $w_r(c)$ is the face of $c$ obtained by replacing the $r$-th coordinate $a_i$ of $c$ by the vertex $i$. Following standard notation: $w_r(c)\nearrow c$.
\end{itemize}

Farley-Sabalka's discrete gradient field on $F_m$ is $\Sigma_m$-equivariant, so it gives in the quotient a discrete gradient field on $F_m/\mathbb{Z}_m$. As reviewed below, these fields lead to a description of the corresponding fundamental groups. In each case, critical 1-cells (also referred to as critical edges) give generators, while critical 2-cells give relations. In the particular case of the graph (\ref{thegraph}), no critical 2-cells arise in $F_m$ or in $F_m/\mathbb{Z}_m$, so both fundamental groups are free. We next classify critical 1-cells (eventual generators) in each case. 

\medskip
For $F_m$, critical edges are given by $m$-tuples $c=(c_1,\ldots,c_m)$ with $c_i\neq c_j$ for $i\neq j$. Using the notation in (\ref{thegraph}) and the descriptions above, one of the $c_i$ equals $a$, while all other $c_i$'s are blocked vertices lying in $\{0,1,\ldots 2m-1\}$. For $b\in\{1,\ldots,m\}$, the critical edges of type $b$ are all possible tuples $c=(c_1,\ldots,c_m)$ with
\begin{equation}\label{tipo}
\{c_1,\ldots,c_m\}=\{0,1,\ldots,b-2,a,m,m+1,\ldots,2m-b-1\}.
\end{equation}
Note that vertices occupy the smallest possible slots on either side of $m-1$ ---they are blocked. Thus, a critical cell of type $b$ has $b-1$ vertices forming a pile blocked by $0$, while all other vertices form a pile blocked by $m-1$. For instance, all vertices $c_i$ are larger than $m-1$ forming a pile blocked by $m-1$ when $b=1$, while all vertices $c_i$ are smaller that $m-1$ forming a pile blocked by $0$ when $b=m$. 

\smallskip
By the $\Sigma_m$-equivariance of Farley-Sabalka's field, the exact same description and classification of critical edges holds for $F_m/\mathbb{Z}_m$, except that we then play with $\mathbb{Z}_m$-orbits of critical edges of $F_m$. As explained in Section \ref{caracterizacion}, the $\mathbb{Z}_m$-action turns out to be the restriction of the standard left action $\Sigma_m\times F_m\to F_m$ given by
\begin{equation}\label{leftaction}
\sigma\cdot(y_1,\ldots,y_m)=(y_{\sigma(1)},\cdots,y_{\sigma(m)}).
\end{equation}

\subsection{$\pi_1$ assessment}
This section computes the group inclusion and the classifying map on the right hand-side portion of (\ref{techniqueVIII}), i.e.,
\begin{equation}\label{maps}
\mathbb{Z}=\pi_1(G)\stackrel{p_1}{\longleftarrow}\pi_1(F_m)\stackrel{\iota}{\longhookrightarrow}\pi_1(F_m/\mathbb{Z}_m)\stackrel{\theta}{\longrightarrow}\mathbb{Z}_m.
\end{equation}

Farley-Sabalka's gradient field determine corresponding maximal forests $\mathcal{F}_m$ and $\mathcal{F}_m/\mathbb{Z}_m$, the former one in $F_m$ and the latter one in $F_m/\mathbb{Z}_m$. In each case, the forest consists of all vertices together with all redundant 1-cells of $A_m$. Under the canonical projection $F_m\to F_m/\mathbb{Z}_m$, $m$ of the trees of $\mathcal{F}_m$ project isomorphically to some tree of $\mathcal{F}_m/\mathbb{Z}_m$. As detailed in the paragraph containing (\ref{coset}) below, this results in a \emph{forest covering projection}
\begin{equation}\label{forestcovering}
\mathcal{F}_m\to\mathcal{F}_m/\mathbb{Z}_m.
\end{equation}

\begin{remark}\label{choosing}{\em
The \emph{Morsefied} Poincar\'e strategy for assessing $\pi_1(A_m)$ starts by extending the maximal forest in $A_m$ to a maximal tree. As shown by Farley and Sabalka, this can be achieved by selecting some critical edges that, when added to the maximal forest, render a maximal tree $\mathcal{T}_m$ for $A_m$. In turn, $\mathcal{T}_m$ is collapsed to some chosen base point $\star$ of $\mathcal{T}_m$, so that each unselected critical edge $x$ yields a generator, also denoted by $x$, of the (free in our case) group $\pi_1(A_m)$. Explicitly, before collapsing $\mathcal{T}_m$, a loop representing the generator $x$ is obtained by joining $\star$ to each of the end points of the edge $x$ through paths in $\mathcal{T}_m$. Although these joining paths can be chosen canonically, it becomes immaterial to explicitly describe them, as $\mathcal{T}_m$ will be collapsed. However, the selection of critical edges connecting the corresponding maximal forests into maximal trees should be done with care. Indeed, the process at the level of $\mathcal{F}_m$ must be compatible to the one for $\mathcal{F}_m/\mathbb{Z}_m$, thus resulting in a controlled description of the image of generators under the projection map inducing the group inclusion $\iota$ in (\ref{maps}). The following constructions pave the way for a suitable selection of the critical edges needed to connect the maximal forests $\mathcal{F}_m$ and $\mathcal{F}_m/\mathbb{Z}_m$ into corresponding maximal trees.}\end{remark}

\begin{definition}\label{orddef}
Think of a vertex $v=(v_1,\ldots,v_m)$ of $F_m$ as the bijection $\{1,\ldots,m\}\stackrel{v}{\to}\{v_1,\ldots,v_m\}$ given by $v(i)=v_i$. The permutation $\sigma_v$ associated to $v$ is defined to be the composition
$$
\{1,\ldots,m\}\stackrel{v}{\longrightarrow}\{v_1,\ldots,v_m\}\stackrel{\text{ord}}{\longrightarrow}\{1,\ldots,m\},
$$
where ord is the bijection that preserves order ---recall that, in the notation of (\ref{thegraph}), each $v_i$ is an integer. In other words, $\sigma_v$ is determined by the requirement that $v_{\sigma_v^{-1}(1)}<v_{\sigma_v^{-1}(2)}<\cdots<v_{\sigma_v^{-1}(m)}$.
\end{definition}

\begin{example}\label{act1}{\em
For $v=(8,2,5,9,4,3)$ we have
$$
\sigma_v=\begin{pmatrix}
\:\:1 & \mapsto & 8 &\mapsto & 5 \\
2 & \mapsto & 2 &\mapsto & 1 \\
3 & \mapsto & 5 &\mapsto & 4 \\
4 & \mapsto & 9 &\mapsto & 6 \\
5 & \mapsto & 4 &\mapsto & 3 \\
6 & \mapsto & 3 &\mapsto & 2 \:\:\\
\end{pmatrix}=(153462)$$
so that $v_2=2<v_6=3<v_5=4<v_3=5<v_1=8<v_4=9$.
}\end{example}

\begin{example}\label{act2}{\em
In connection to (\ref{leftaction}), if the vertex $v=(v_1,\ldots,v_m)$ of $F_m$ satisfies $v_1<v_2<\cdots<v_m$ then, for each permutation $\sigma\in\Sigma_m$, the permutation associated to $\sigma\cdot v$ is precisely $\sigma$.
}\end{example}

\begin{proposition}\label{permutationofcollapsibleedges}
Collapsible edges in $F_m$ connect vertices with the same associated permutation. In other words, any tree of the maximal forest $\mathcal{F}_m$ involves vertices with the same associated permutation.
\end{proposition}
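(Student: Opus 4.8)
The plan is to read off both assertions directly from the combinatorial shape of a collapsible cell, the only genuine input being the non-collision constraint defining $DF_m(G)$. First I would recall that, by definition, a collapsible $1$-cell $c=(c_1,\ldots,c_m)$ of $F_m$ carries a single edge coordinate, and that this coordinate has the form $a_i=c_r$ with $1\le i\le 2m-1$; crucially it is a tree edge of $T$ (of finite ordinal $\text{ord}(a_i)=i$), never the special edge $a$, which has ordinal $\infty$ and occurs only in critical cells. The edge $a_i$ joins the two \emph{consecutive} vertices $i-1$ and $i$. The two vertices that $c$ connects are precisely its two faces: the vertex $v=w_r(c)$ obtained by replacing the $r$-th coordinate $a_i$ with the vertex $i$, and the vertex $v'$ obtained by replacing it with the vertex $i-1$. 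By construction $v$ and $v'$ agree in every coordinate other than the $r$-th.

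Next I would invoke the non-collision condition built into $DF_m(G)$. Since $a_i=c_r$ is a coordinate of the cell $c$, its closure $\overline{a_i}=\{i-1,i\}$ is disjoint from the closures of all remaining coordinates; hence no coordinate $v_s$ with $s\neq r$ takes the value $i-1$ or $i$, so that $v_s\le i-2$ or $v_s\ge i+1$ for each such $s$. In the first case $v_s<i-1<i$, and in the second $v_s>i>i-1$, so in either case the $s$-th coordinate compares the same way against the value $i$ as against the value $i-1$. Since all coordinates other than the $r$-th are literally identical for $v$ and $v'$, the two tuples induce the very same linear order on $\{1,\ldots,m\}$; by Definition \ref{orddef} this means $\sigma_v=\sigma_{v'}$. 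This establishes that a collapsible edge joins two vertices with a common associated permutation.

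For the reformulation, I would use that the edges of the maximal forest $\mathcal{F}_m$ are collapsible $1$-cells, so that the trees of $\mathcal{F}_m$ are exactly those spanned by the edges treated above. Any two vertices lying in one tree of $\mathcal{F}_m$ are then joined by an edge-path all of whose edges are collapsible, and the associated permutation is unchanged across each such edge by the previous paragraph; transitivity forces every vertex of a fixed tree to carry one and the same associated permutation. The step I would treat as the crux is the first paragraph's observation that the edge coordinate of a collapsible cell is necessarily a consecutive-vertex tree edge $a_i$ rather than the special edge $a$ (for which moving a token between vertices $m-1$ and $2m-1$ would in general straddle other occupied positions and alter $\sigma_v$); once this is secured, the non-collision bookkeeping of the second paragraph is routine and the conclusion follows.
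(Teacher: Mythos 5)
Your proof is correct and takes essentially the same route as the paper's: both arguments reduce to the facts that a collapsible edge's unique edge coordinate is a tree edge $a_i$, that its two endpoints therefore differ only by exchanging $i$ and $i-1$ in a single slot, and that the non-collision condition defining $DF_m(G)$ keeps every other coordinate out of $\{i-1,i\}$, so the associated permutation is unchanged; the assertion about trees of $\mathcal{F}_m$ then follows by transitivity along edge-paths, exactly as in the paper. The one inaccuracy is your parenthetical justification that the special edge $a$ ``occurs only in critical cells'': for $m\geq 3$ it also occurs in redundant $1$-cells (e.g.\ $(a,0,4)$ when $m=3$, whose vertex coordinate $4$ is unblocked), but this is harmless, since the definition of a collapsible cell already stipulates that its edge coordinate is some $a_i\subset T$, which is all your argument actually needs.
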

\begin{proof}
Let $c$ be a collapsible edge in $F_m$. In the notation of (\ref{thegraph}), $c$ has coordinates $a_{v},u_1,\ldots,u_{m-1}$ (we thus ignore the actual order of coordinates in $c$), all of which belong to $T$ (the maximal tree of $G$). Recall that the coordinates of the redundant vertex $d$ of $F_m$ having $d\nearrow c$ are $v,u_1,\ldots,u_{m-1}$ where all $u_i$'s smaller than $v$ are in fact smaller than $v-1$ and form a pile of vertices blocked by $0$, while the rest of the $u_i$'s are larger than $v$ (and thus larger than $v-1$). The result follows as the vertex $d'$ of $c$ opposite to $d$ differs from $d$ in that the coordinate $v$ changes to $v-1$, so that the permutations associated to $d$ and $d'$ are identical.
\end{proof}

There are $m!$ critical vertices in $F_m$ (corresponding to all possible permutations of the blocked vertices $0,1,\ldots,m-1$). By \cite[Proposition 2.3]{MR2171804}, this means that there are $m!$ trees in the maximal forest $\mathcal{F}_m$. We thus get:
\begin{corollary}\label{treeslabelling}
All vertices with a fixed associated permutation assemble one of the $m!$ trees in $\mathcal{F}_m$.
\end{corollary}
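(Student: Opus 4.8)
The plan is to combine the two facts already established just before the statement: Proposition \ref{permutationofcollapsibleedges}, which says that every tree of the maximal forest $\mathcal{F}_m$ consists entirely of vertices sharing one fixed associated permutation, and the count (via \cite[Proposition 2.3]{MR2171804}) that there are exactly $m!$ trees in $\mathcal{F}_m$. The target of Corollary \ref{treeslabelling} is to upgrade ``each tree lives over a single permutation'' into the sharper statement ``each of the $m!$ permutations is realized by exactly one tree, whose vertex set is the full fiber of that permutation under the $\sigma_{(-)}$ assignment''. So the content is a bijection-by-counting argument.

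First I would observe that Proposition \ref{permutationofcollapsibleedges} yields a well-defined map from the set of trees of $\mathcal{F}_m$ to the symmetric group $\Sigma_m$ (equivalently, to the $m!$ permutations of $\{0,1,\ldots,m-1\}$), sending a tree to the common associated permutation of all its vertices. Next I would note that this map is surjective: for each permutation there is a corresponding critical vertex (obtained by arranging the blocked vertices $0,1,\ldots,m-1$ in the order dictated by that permutation), and by \cite[Proposition 2.3]{MR2171804} each critical vertex is the root of precisely one tree of the forest, so every permutation is hit. Since both the set of trees and the set of permutations have cardinality $m!$, a surjection between them is automatically a bijection. Consequently the fiber over each permutation is a single tree, which forces that tree to contain \emph{all} vertices with that associated permutation — giving the ``assemble'' conclusion.

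The one point requiring a little care, which I expect to be the main (though modest) obstacle, is the logical direction: Proposition \ref{permutationofcollapsibleedges} only guarantees that each tree is contained in the fiber of $\sigma_{(-)}$ over some permutation; it does not by itself guarantee that a whole fiber is connected within the forest. The counting argument is exactly what closes this gap — since every vertex of $F_m$ lies in some tree of $\mathcal{F}_m$ (the forest spans all vertices, as noted in the paragraph defining $\mathcal{F}_m$), and since there are as many trees as permutations with each tree mapping into a single fiber, no fiber can be split across two or more trees without leaving some permutation unrepresented, contradicting surjectivity. Hence I would present the argument as: (i) the forest spans all vertices and has $m!$ trees; (ii) $\sigma_{(-)}$ is constant on each tree by Proposition \ref{permutationofcollapsibleedges}; (iii) the induced tree-to-permutation map is a surjection between two $m!$-element sets, hence a bijection; (iv) therefore each tree is exactly the fiber $\sigma_{(-)}^{-1}(\pi)$ for its permutation $\pi$, which is the assertion.
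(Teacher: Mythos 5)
Your argument is correct and is essentially the same as the paper's (implicit) one: the paper also combines Proposition \ref{permutationofcollapsibleedges} with the count of $m!$ trees coming from the $m!$ critical vertices via \cite[Proposition 2.3]{MR2171804}, the corollary then following by exactly the surjection-between-equal-finite-sets argument you spell out. Your write-up simply makes explicit the counting step the paper leaves to the reader.
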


We thus label trees of $\mathcal{F}_m$ by permutations, using $T_{\sigma}$ for the tree encoding the vertices with associated permutation $\sigma$. In particular, the $m$ trees $T_{(12\cdots m)^i\sigma}$, with $1\leq i\leq m$, get identified under (\ref{forestcovering}) to a single tree, denoted by $T_{[\sigma]}$, of the quotient forest $\mathcal{F}_m/\mathbb{Z}_m$. In other words, we label trees of the latter forest by $\Sigma_m/\mathbb{Z}_m$ cosets
\begin{equation}\label{coset}
[\sigma]=\{(12\cdots m)^i\sigma\colon1\leq i\leq m\}.
\end{equation}

\begin{remark}\label{rightaction}{\em
Since the cosets (\ref{coset}) are taken with a left indeterminacy, there is a well defined right action of $\Sigma_m$ into $\Sigma_m/\mathbb{Z}_m$, $\Sigma_m/\mathbb{Z}_m\times \Sigma_m\to\Sigma_m/\mathbb{Z}_m$, given by $[\sigma]\cdot\tau=[\sigma\cdot\tau]$.
}\end{remark}

Orient edges of $G$ from its vertex with the smaller index to its vertex with the larger index. For instance, $a$ is oriented from $m-1$ to $2m-1$. Edges of $A_m$ are then oriented accordingly. For instance, the type-$b$ critical edge
\begin{equation}\label{naturalmenteordenado}
\mathcal{O}_b:=(0,1,\cdots,b-2,a,m,m+1,\cdots,2m-b-1)
\end{equation}
of $F_m$ is oriented from its vertex $(0,1,\cdots,b-2,m-1,m,m+1,\cdots,2m-b-1)$ to its vertex $(0,1,\cdots,b-2,2m-1,m,m+1,\cdots,2m-b-1)$. We then say that these vertices are the source and the target, respectively, of $\mathcal{O}_b$.

\begin{lemma}\label{associatedpermutationsofacriticaedge}
Let a type-$b$ critical edge $x$ of $F_m$ have source vertex $\iota$ and target vertex $\varphi$. Then $\sigma_\varphi=\sigma_\iota c_b^{-1}$. Here $c_b$ stands for the cycle $(b\cdots m)\in\Sigma_m$ (for instance $c_1=(12\cdots m)$ and $c_m=1$, the identity permutation).
\end{lemma}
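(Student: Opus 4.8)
The plan is to compute the two associated permutations directly from Definition \ref{orddef} and read off their relationship. The crucial observation is that the source $\iota$ and the target $\varphi$ of $x$ differ in exactly one coordinate: since $a$ is oriented from $m-1$ to $2m-1$, if $a$ occupies the $r$-th slot of $x$, then $\iota_r=m-1$ and $\varphi_r=2m-1$, while $\iota_s=\varphi_s$ for every $s\neq r$. Thus the entire computation reduces to tracking how the rank function $\sigma_\bullet$ of Definition \ref{orddef} changes when a single entry is promoted from $m-1$ to $2m-1$.

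First I would record the ranks of the two special values inside a type-$b$ critical edge. By the description (\ref{tipo}), the coordinates of $\iota$ other than the one in slot $r$ form the set $\{0,1,\ldots,b-2\}\cup\{m,m+1,\ldots,2m-b-1\}$; exactly $b-1$ of these lie below $m-1$, so $m-1$ has rank $b$ in $\iota$, i.e.\ $\sigma_\iota(r)=b$. Since $2m-1$ exceeds every remaining coordinate (the largest of which is $2m-b-1\le 2m-2$), it has rank $m$ in $\varphi$, i.e.\ $\sigma_\varphi(r)=m$. Next I would compare the ranks of the other coordinates. For a slot $s\neq r$: if $\iota_s<m-1$ then both $m-1$ and $2m-1$ sit above it, so its rank is unchanged, $\sigma_\varphi(s)=\sigma_\iota(s)\in\{1,\ldots,b-1\}$; if $\iota_s>m-1$ then replacing $m-1$ (which lay below $\iota_s$) by $2m-1$ (which lies above it) deletes one element from the coordinates beneath $\iota_s$, so $\sigma_\varphi(s)=\sigma_\iota(s)-1$, with $\sigma_\iota(s)$ ranging over $\{b+1,\ldots,m\}$.

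Assembling these three cases yields $\sigma_\varphi=\rho\circ\sigma_\iota$ as self-maps of $\{1,\ldots,m\}$, where $\rho$ fixes $\{1,\ldots,b-1\}$, sends $b$ to $m$, and sends $k$ to $k-1$ for $b<k\le m$; in cycle form $\rho=(b\,\,m\,\,m-1\,\cdots\,b+1)=c_b^{-1}$. The last step is purely notational but is the one demanding care, and is the main obstacle: the statement is phrased in the paper's product convention (\ref{firstsecond}), under which the symbol $\sigma_\iota c_b^{-1}$ denotes $c_b^{-1}\circ\sigma_\iota$. With this translation, the identity $\sigma_\varphi=c_b^{-1}\circ\sigma_\iota$ just derived reads exactly $\sigma_\varphi=\sigma_\iota c_b^{-1}$, as claimed.

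As a sanity check I would test the formula on the naturally ordered edge $\mathcal{O}_b$ of (\ref{naturalmenteordenado}): its source has strictly increasing coordinates, whence $\sigma_\iota=\mathrm{id}$ and the formula forces $\sigma_\varphi=c_b^{-1}$, which one confirms by directly sorting the coordinates of its target. An alternative packaging is available through the $\Sigma_m$-equivariance (\ref{leftaction}) together with Example \ref{act2}: writing $x=\tau\cdot\mathcal{O}_b$ and transporting the computation for $\mathcal{O}_b$ along $\tau$. However, the direct rank count above seems cleanest and avoids an extra bookkeeping of $\tau$.
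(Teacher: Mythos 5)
Your proof is correct and follows essentially the same route as the paper's: both arguments compute the rank (ord) functions of the source and target by the same three-way case analysis (coordinates below $b-1$, the slot occupied by $a$, and coordinates at least $m$), identify the relating permutation as the cycle $c_b^{\pm1}$, and finish by invoking the composition convention (\ref{firstsecond}). The only difference is presentational --- the paper writes out both $\sigma_\iota$ and $\sigma_\varphi$ in closed form and compares them, whereas you track how ranks shift when the single entry $m-1$ is promoted to $2m-1$ --- but the underlying computation is identical.
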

\begin{proof}
As in Definition \ref{orddef}, think of $x=(x_1,x_2,\ldots,x_m)$ as a bijection
$$x\colon\{1,2,\ldots,m\}\stackrel{x}\longrightarrow\{0,1,\ldots,b-2,a,m,m+1,\ldots,2m-b-1\}.$$
The permutations $\sigma_\iota$ and $\sigma_\varphi$ are obtained by, first, composing $x$ with corresponding bijections
\begin{align*}
\{0,1,\ldots,b-2,a,m,m+1,\ldots,2m-b-1\}
\longrightarrow\{0,1,\cdots,b-2,d(a),m,m+1,\cdots,2m-b-1\},
\end{align*}
where $d(a)=m-1$ in the case of $\sigma_\iota$, and $d(a)=2m-1$ in the case of $\sigma_\varphi$ and, then, composing the result with the corresponding ord bijection. The resulting permutations are
$$
j\mapsto x_j\mapsto\begin{cases} 
x_j \;\;(\text{for } 0\leq x_j\leq b-2) & \mapsto x_j+1\\
m-1 \;\;(\text{for } x_j=a) &  \mapsto b \\
x_j \;\;(\text{for } m\leq x_j\leq 2m-b-1) & \mapsto x_j-m+b+1
\end{cases}
$$ 
in the case of $\sigma_\iota$ while, for $\sigma_\varphi$, the resulting permutation is
$$
j\mapsto x_j\mapsto\begin{cases} 
x_j \;\;(\text{for } 0\leq x_j\leq b-2) & \mapsto x_j+1\\
2m-1 \;\;(\text{for } x_j=a) &  \mapsto m \\
x_j \;\;(\text{for } m\leq x_j\leq 2m-b-1) & \mapsto x_j-m+b.
\end{cases}
$$
A direct verification then gives that, as functions, $\sigma_\iota=(b,\ldots,m)\circ\sigma_\varphi$. The result then follows from the convention in (\ref{firstsecond}).
\end{proof}

\begin{remark}\label{associatedpermutationsofacriticaedgemodm}{\em
Lemma \ref{associatedpermutationsofacriticaedge} passes to the quotient $F_m/\mathbb{Z}_m$. Indeed, for a vertex $v=(v_1,\ldots,v_m)$ of $F_m$, an easy  direct verification gives $\sigma_{\mu\cdot v}=\mu\cdot\sigma_v$. In particular, the $\mathbb{Z}_m$-orbit $[v]$ has a well-defined associated permutation mod $\mathbb{Z}_m$ given by $[\sigma_v]$, which we denote by $\sigma_{[v]}$. Furthermore, for the type-$b$ critical edge $x$ in Lemma \ref{associatedpermutationsofacriticaedge} (starting at $\iota$ and ending at $\varphi$), we have that $c_1^i\cdot x$ is a type-$b$ critical edge starting at $c_1^i\cdot\iota$ and ending at $c_1^i\cdot\varphi$, and for which Lemma \ref{associatedpermutationsofacriticaedge} yields the second equality in
$$
c_1^i\cdot\sigma_\varphi= \sigma_{c_1^i\cdot\varphi}=\sigma_{c_1^i\cdot\iota}\cdot c_b^{-1}=c_1^i\cdot\sigma_\iota\cdot c_b^{-1}.
$$
This amounts to the fact that, for the corresponding critical edge $[x]$ of $F_m/\mathbb{Z}_m$ starting at $[\iota]$ and ending at $[\varphi]$, we have
\begin{equation}\label{pasadoalcociente}
\sigma_{[\varphi]}=\sigma_{[\iota]}\cdot c_b^{-1},
\end{equation}
where the product is the one in Remark \ref{rightaction}. In order to get the full quotient version of Lemma \ref{associatedpermutationsofacriticaedge}, it then suffices to declare the type of $[x]$ to be $b$, i.e., that of $x$.
}\end{remark}

\begin{remark}\label{recovering}{\em
The precise tuple expressions of the critical edge $x$ in Lemma \ref{associatedpermutationsofacriticaedge} and, therefore, of its end points $\iota$ and $\varphi$, can be recovered from the permutation $\sigma_\iota$ associated to the start of $x$, and from the type $b$ of $x$. Indeed, $b$ specifies, up to a permutation, the actual coordinates to be used, i.e. (\ref{tipo}), while $\sigma_\iota$ specifies the actual order of the coordinates in the tuple. Explicitly, $x=\sigma_\iota\cdot\mathcal{O}_b$ while $\iota=\sigma_\iota\cdot(0,1,\cdots,b-2,m-1,m,m+1,\cdots,2m-b-1)$ ---compare to Example \ref{act2}.
}\end{remark}

We are now ready to specify the selected critical edges in Remark \ref{choosing} in the case of $F_m$, i.e., the critical edges that will be added to the maximal forest $\mathcal{F}_m$ in order to connect it into a maximal tree. Namely, a type-$b$ critical edge $x=(x_1,\ldots,x_m)$ of $F_m$ is selected provided its first $b-1$ coordinates are minimal possible, i.e., $x_i=i-1$ for $1\leq i<b$, but the $b$-th entry is not minimal possible, i.e., $x_b\neq a$. Equivalently, $x=\sigma\cdot\mathcal{O}_b$ is selected precisely when $\sigma$ (the permutation associated the the start of $x$) satisfies $\sigma(i)=i$ for $1\leq i<b$ but $\sigma(b)\neq b$. In particular, the total number of type-$b$ critical edges that are selected is
\begin{equation}\label{numerodeselecccionadasdetipob}
(m-b)!(m-b).
\end{equation}
In particular, no critical edges of type $m$ are selected. The number in (\ref{numerodeselecccionadasdetipob}) plays a key role in the proof of:

\begin{proposition}\label{contractilextension}
Adding the selected critical edges of type $b$, for all $b\in\{1,\ldots,m-1\}$, to the maximal forest $\mathcal{F}_m$ renders a maximal tree in $F_m$.
\end{proposition}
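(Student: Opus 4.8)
The plan is to show that adding the selected type-$b$ critical edges to the maximal forest $\mathcal{F}_m$ produces a connected, acyclic subcomplex containing every vertex, i.e.\ a maximal tree. Since $\mathcal{F}_m$ already contains all vertices, maximality of the resulting graph is automatic once connectedness and acyclicity are verified; and because the total space $F_m$ is a graph-braid configuration space whose first Betti number is already pinned down by the count of unselected critical edges, it will in fact suffice to prove \emph{connectedness}, together with a dimension/count check guaranteeing that no cycles are introduced. Concretely, I would first recall from Corollary \ref{treeslabelling} that the $m!$ trees of $\mathcal{F}_m$ are indexed by permutations $\sigma\in\Sigma_m$ (the tree $T_\sigma$), and reformulate the goal as: the selected edges, viewed as connections between these trees, turn the $m!$ vertices $\{T_\sigma\}$ into a connected graph on $m!$ nodes using exactly $m!-1$ edges.

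The heart of the argument is the combinatorial bookkeeping provided by Lemma \ref{associatedpermutationsofacriticaedge}. A selected type-$b$ critical edge $x=\sigma\cdot\mathcal{O}_b$ has its source in $T_{\sigma_\iota}$ with $\sigma_\iota=\sigma$ and, by that lemma, its target in $T_{\sigma_\iota c_b^{-1}}$ where $c_b=(b\cdots m)$. The selection rule says $\sigma$ fixes $1,\ldots,b-1$ and moves $b$; thus a selected type-$b$ edge joins $T_\sigma$ to $T_{\sigma c_b^{-1}}$ precisely for those $\sigma$. First I would verify the edge count: summing (\ref{numerodeselecccionadasdetipob}) over $b\in\{1,\ldots,m-1\}$ gives $\sum_{b=1}^{m-1}(m-b)!(m-b)$, which telescopes (using $(m-b)!(m-b)=(m-b+1)!-(m-b)!$) to $m!-1$. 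This is exactly the number of edges needed for a spanning tree on $m!$ nodes, so acyclicity will follow \emph{for free} from connectedness.

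It therefore remains to prove connectedness of the quotient graph on $\{T_\sigma\}_{\sigma\in\Sigma_m}$. The natural approach is a downward induction / normalization argument: given any $\sigma$, I would show $T_\sigma$ is connected through selected edges to the tree $T_{\mathrm{id}}$ of the identity permutation. The idea is to sort $\sigma$ toward the identity one position at a time. If $\sigma$ is not the identity, let $b$ be the smallest index with $\sigma(b)\neq b$; then $\sigma$ fixes $1,\ldots,b-1$ and moves $b$, so $\sigma$ is the source-label of a genuinely selected type-$b$ edge connecting $T_\sigma$ to $T_{\sigma c_b^{-1}}$. The key point to check is that post-multiplying by $c_b^{-1}=(b\,m\,m{-}1\cdots b{+}1)$ strictly increases the size of the fixed initial segment (or otherwise provides a well-founded measure decreasing toward the identity); iterating then produces a path of selected edges from $T_\sigma$ down to $T_{\mathrm{id}}$, establishing connectedness.

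The step I expect to be the main obstacle is precisely verifying that the move $\sigma\mapsto\sigma c_b^{-1}$ makes monotone progress toward the identity under a suitable well-ordering of $\Sigma_m$. The cycle $c_b^{-1}$ acts only on the positions $b,\ldots,m$, so one must confirm that the smallest non-fixed index is pushed strictly upward (equivalently, that one can always arrange $\sigma(b)=b$ after the correction without disturbing $1,\ldots,b-1$), and that this process terminates rather than cycling. I would make this rigorous by choosing the lexicographic measure ``length of the longest initial segment fixed by $\sigma$'' (or, if that is not strictly monotone at each single step, the reverse-lexicographic order on one-line notation) and checking that each selected edge used in the induction strictly improves it. Once termination and strict monotonicity are confirmed, connectedness follows, and combined with the telescoping edge count $m!-1$ this forces the spanning subgraph to be a tree, completing the proof.
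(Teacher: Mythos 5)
Your proposal is correct and follows essentially the same route as the paper's proof: collapse each tree $T_\sigma$ of $\mathcal{F}_m$ to a point, use the edge count $\sum_{b=1}^{m-1}(m-b)!(m-b)=m!-1$ (equivalently, Euler characteristic $1$) to reduce everything to connectedness, and then prove connectedness by sorting an arbitrary $\sigma$ to the identity via selected edges, fixing positions $1,2,\ldots,m-1$ in turn. The only difference is one of bookkeeping: the paper processes the types $b=1,2,\ldots,m-1$ in order, using several selected edges of each type to fix position $b$ (which is exactly the iteration your lexicographic measure formalizes), so your worry about a single step not enlarging the fixed initial segment is real but resolved just as you suggest --- each type-$b$ move preserves positions $1,\ldots,b-1$ and strictly decreases the value at position $b$, so the process terminates.
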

\begin{proof}
Let $\Gamma$ stand for the graph that results by extending $\mathcal{F}_m$ through addition of the indicated critical edges, and then, for each $\sigma\in\Sigma_m$, collapsing to a point $z_\sigma$ the tree $T_\sigma$ of $\mathcal{F}_m$. Thus $\Gamma$ has $m!$ vertices and, in view of (\ref{numerodeselecccionadasdetipob}), $$m!-1=\sum_{b=1}^{m-1}(m-b)!(m-b)$$ edges. Since the Euler characteristic of $\Gamma$ is 1, once we prove that $\Gamma$ is connected, it will follow that $\Gamma$ is a tree and, hence, the result. We thus show that each $z_\sigma$ is connected in $\Gamma$ to $z_1$, the point corresponding to the identity permutation $1\in\Sigma_m$.

By Lemma \ref{associatedpermutationsofacriticaedge}, using selected critical edges of type $b=1$, we connect $z_{\sigma}$ to $z_{\sigma_1}$ for a permutation $\sigma_1\in\Sigma_m$ satisfying $\sigma_1(1)=1$. (For instance, $\sigma_1=\sigma$ if $\sigma(1)=1$.) Likewise, using selected critical edges of type $b=2$, we connect $z_{\sigma_1}$ to $z_{\sigma_2}$ for a permutation $\sigma_2\in\Sigma_m$ satisfying $\sigma_2(1)=1$ and $\sigma_2(2)=2$. In the last step of the process, we use selected critical edges of type $b=m-1$ in order to connect $z_{\sigma_{m-2}}$ to $z_{\sigma_{m-1}}$ for a permutation $\sigma_{m-1}\in\Sigma_m$ satisfying $\sigma_{m-1}(i)=i$ for $i\leq m-1$. But then $\sigma_{m-1}$ is forced to be the identity permutation.
\end{proof}

\begin{remark}\label{loscanonicosydemas}{\em
Before engaging into the next step ---selecting critical edges that connect $\mathcal{F}_m/\mathbb{Z}_m$ into a maximal tree for $F_m/\mathbb{Z}_m$---, it is convenient to choose canonical representatives of $\mathbb{Z}_m$-orbits of critical edges. First, the unique representative $\sigma$ in each coset (\ref{coset}) having $\sigma(1)=1$ will be called the \emph{canonical representative} of the coset. Canonical representatives allow us to identify $\Sigma_m/\mathbb{Z}_m$ with the set $\Sigma_{m-1}\subset\Sigma_m$ consisting of the permutations $\sigma\in\Sigma_m$ with $\sigma(1)=1$. Such a convention allows us to choose canonical representatives of vertices $[v]$ and of edges $[x]$ of $F_m/\mathbb{Z}_m$. Namely, $v$ is the canonical representative of $[v]$ provided $\sigma_v$ is the canonical representative of $[\sigma_v]$. In turn, $x$ is the canonical representative of a type-$b$ critical edge $[x]$ of $F_m/\mathbb{Z}_m$ provided the source of $x$ is the canonical representative of the source of $[x]$. In the latter situation, (\ref{pasadoalcociente}) shows that, when $b\geq2$, the target of $x$ is also the canonical representative of the target of $[x]$. Note that canonical representatives are given by the tuples whose first coordinate is smallest possible.
}\end{remark}

The selection of the connecting critical edges in Remark \ref{choosing} in the case of $F_m/\mathbb{Z}_m$ is formalized in terms of canonical representatives. Namely, while no type-$1$ critical edge is selected\footnote{This situation plays a subtle role in the description of the group monomorphism in (\ref{maps}). See the discussing following Proposition \ref{groupmonoinmostofthecases}.}, a type-$b$ critical edge with $b\geq2$ is selected provided its canonical representative $(x_1,\ldots,x_m)=\sigma\cdot\mathcal{O}_b$ satisfies $x_i=i-1$ for $1\leq i<b$ and $x_b\neq a$, i.e., provided the canonical representative is selected in the case of $F_m$. Thus, while the condition $\sigma(1)=1$ is automatic since we have chosen the canonical representative, the selection condition amounts to the conditions $\sigma(i)=i$ for $1\leq i<b$ and $\sigma(b)\neq b$.

\begin{remark}\label{condiciondecompatibilidad}{\em
By construction, the $\mathbb{Z}_m$-orbit of each selected critical edge of type at least 2 in $F_m$ is also selected in $F_m/\mathbb{Z}_m$. In fact, when $b\geq2$, we are selecting the same amount of type-$b$ critical edges in $F_m/\mathbb{Z}_m$ than in $F_m$. In particular, the total number of critical edges selected in the case of $F_m/\mathbb{Z}_m$ is
\begin{equation}\label{nuevototal}
(m-1)!-1=\sum_{b=2}^{m-1}(m-b)!(m-b).
\end{equation}
}\end{remark}

The compatibility condition in the first sentence of Remark \ref{condiciondecompatibilidad} addresses the requirements stressed in the second half of Remark \ref{choosing}. On the other hand, just as in the case of $F_m$, the second assertion of Remark \ref{condiciondecompatibilidad} leads to:

\begin{proposition}\label{contractilextensionmodm}
Adding the selected critical edges of type $b$, for all $b\in\{2,\ldots,m\}$, to the maximal forest $\mathcal{F}_m/\mathbb{Z}_m$ renders a maximal tree in $F_m/\mathbb{Z}_m$.
\end{proposition}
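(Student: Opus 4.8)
The plan is to mirror the argument used in Proposition \ref{contractilextension}, working now with the quotient forest $\mathcal{F}_m/\mathbb{Z}_m$ and its canonical-representative bookkeeping. First I would form the graph $\Delta$ obtained from $F_m/\mathbb{Z}_m$ by collapsing each tree $T_{[\sigma]}$ of $\mathcal{F}_m/\mathbb{Z}_m$ to a single point $z_{[\sigma]}$, and then adjoining the selected critical edges of types $b\in\{2,\ldots,m\}$. By Corollary \ref{treeslabelling} together with the coset description (\ref{coset}), the vertices of $\Delta$ are in bijection with $\Sigma_m/\mathbb{Z}_m$, which by the canonical-representative convention of Remark \ref{loscanonicosydemas} we identify with $\Sigma_{m-1}\subset\Sigma_m$; hence $\Delta$ has exactly $(m-1)!$ vertices. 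By the second assertion of Remark \ref{condiciondecompatibilidad}, recorded in (\ref{nuevototal}), the number of adjoined edges is $\sum_{b=2}^{m-1}(m-b)!(m-b)=(m-1)!-1$, so $\chi(\Delta)=1$. As in the previous proof, it therefore suffices to prove that $\Delta$ is connected, whence it is forced to be a tree.

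For connectedness I would run the same successive-reduction scheme as in Proposition \ref{contractilextension}, but now using the quotient transformation law (\ref{pasadoalcociente}) in place of Lemma \ref{associatedpermutationsofacriticaedge}. Starting from an arbitrary coset vertex $z_{[\sigma]}$ with canonical representative $\sigma\in\Sigma_{m-1}$ (so $\sigma(1)=1$ already), I would use selected type-$2$ critical edges to connect $z_{[\sigma]}$ to some $z_{[\sigma_2]}$ with $\sigma_2(1)=1$ and $\sigma_2(2)=2$; then selected type-$3$ edges to reach $\sigma_3$ fixing $1,2,3$; and so on. The key point is that, because $\sigma(1)=1$ is automatic for canonical representatives, the selection condition for a type-$b$ edge reduces exactly to $\sigma(i)=i$ for $1\leq i<b$ and $\sigma(b)\neq b$, so precisely the same combinatorial move that was available in $F_m$ for $b\geq2$ remains available here. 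After the step using type-$(m-1)$ edges we reach a permutation fixing $1,\ldots,m-1$, which (being in $\Sigma_m$) is forced to be the identity; thus every $z_{[\sigma]}$ connects to $z_{[1]}$.

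The one genuinely new subtlety, and where I expect to spend the most care, concerns the absence of selected type-$1$ edges and the correct interpretation of (\ref{pasadoalcociente}) under the right action of Remark \ref{rightaction}. In the $F_m$ argument the type-$1$ reduction $\sigma_\varphi=\sigma_\iota c_1^{-1}$ was what collapsed the $\mathbb{Z}_m$-cycling $c_1=(12\cdots m)$; passing to the quotient, that cycling is precisely the coset indeterminacy, so no type-$1$ edge is needed (indeed none is selected) and the reduction must legitimately begin at $b=2$. I would check that the move at stage $b$ genuinely realizes multiplication by $c_b^{-1}=(b\cdots m)^{-1}$ on canonical representatives and that this can always be arranged to fix the already-stabilized initial segment $1,\ldots,b-1$ while adjusting the value at $b$; concretely, that for any $\sigma$ fixing $1,\ldots,b-1$ with $\sigma(b)\neq b$ there is a selected type-$b$ edge connecting $z_{[\sigma]}$ to a coset whose canonical representative additionally fixes $b$. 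Granting this, the Euler-characteristic count closes the argument exactly as before.
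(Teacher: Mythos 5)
Your proposal is correct and follows essentially the same route as the paper's (much terser) proof: the Euler-characteristic count via (\ref{nuevototal}) on the collapsed graph, plus the connecting process of Proposition \ref{contractilextension} started at stage $b=2$, the type-$1$ step being unnecessary because canonical representatives already fix $1$. One small imprecision in your final ``check'': a single selected type-$b$ edge takes $[\sigma]$ to $[\sigma c_b^{-1}]$, whose canonical representative has value $\sigma(b)-1$ at $b$, so reaching a representative that fixes $b$ generally requires a path of several selected type-$b$ edges (as the plural ``edges'' in your main connectivity paragraph already correctly suggests), not one edge.
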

\begin{proof}
Consider the graph $\Gamma'$ analogue to the graph $\Gamma$ in the proof of Proposition \ref{contractilextension}. Looking at canonical representatives, we see that $\Gamma'$ has $(m-1)!$ edges and, by (\ref{nuevototal}), Euler characteristic $1$. The connectivity of $\Gamma'$ and, therefore, the conclusion then follow from the connecting process used in the second part of the proof of Proposition \ref{contractilextension}, except that this time the initial step of the process (with $b=1$) is not available nor needed as we work with canonical representatives.
\end{proof}

Theorem 2.5 in \cite{MR2171804} now yields:
\begin{corollary}\label{bases} 
\begin{enumerate}
\item A base for the free group $\pi_1(F_m)$ is given by the (based homotopy classes of the loops corresponding, under the considerations in Remark \ref{choosing}, to the) critical edges $(x_1,\ldots,x_m)=\sigma\cdot\mathcal{O}_b$ in $F_m$ of type $b$ $(1\leq b\leq m)$ satisfying the following two equivalent conditions:
\begin{align}
x_i=i-1 \mbox{ \ for \ } 1\leq i<b\:\:&\Longrightarrow\:\: x_b=a. \label{condicionparalabase} \\
\sigma(i)=i \mbox{ \ for \ } 1\leq i<b\:\:&\Longrightarrow\:\: \sigma(b)=b. \label{condicionparalabasebis}
\end{align}
(For instance, when $b=1$, the condition is simply $x_1=a$, i.e., $\sigma(1)=1$.)
\item A base for the free group $\pi_1(F_m/\mathbb{Z}_m)$ is given by the (based homotopy classes of the loops corresponding, under the considerations in Remark \ref{choosing}, to the) critical edges $[x]$ in $F_m/\mathbb{Z}_m$ of type $b$ $(1\leq b\leq m)$ whose canonical representative $(x_1,\ldots,x_m)=\sigma\cdot\mathcal{O}_b$ satisfies
\begin{itemize}
\item[(i)] $x_1=a$, i.e. $\sigma(1)=1$, when $b=1$.
\item[(ii)] $x_1=0$ and (\ref{condicionparalabase}), i.e., $\sigma(1)=1$ and (\ref{condicionparalabasebis}), when $b\geq2$.
\end{itemize}
(Beware: There is no restriction at all in (i), i.e., $x_1=a$ holds automatically for $b=1$, as we have taken the canonical representative. Likewise, the equality $x_1=0$ in (ii) is automatic.)
\end{enumerate} 
\end{corollary}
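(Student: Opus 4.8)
The plan is to read this off directly from the Farley--Sabalka basis theorem (Theorem 2.5 in \cite{MR2171804}): once a maximal forest of a Morsefied braid complex is extended to a maximal tree by adjoining a collection of \emph{selected} critical $1$-cells, the remaining \emph{unselected} critical $1$-cells furnish a free basis of the (free) fundamental group. Since Propositions \ref{contractilextension} and \ref{contractilextensionmodm} already certify that the selected critical edges enlarge the maximal forests $\mathcal{F}_m$ and $\mathcal{F}_m/\mathbb{Z}_m$ into genuine maximal trees, the only remaining task is purely bookkeeping: describe the complementary set of unselected critical edges, i.e.\ negate each selection rule and translate the outcome into the two equivalent languages (tuple coordinates $x_i$ and permutation values $\sigma(i)$) used in the statement.

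First I would handle $F_m$. A type-$b$ critical edge $x=\sigma\cdot\mathcal{O}_b$ was declared selected exactly when $x_i=i-1$ for $1\leq i<b$ and $x_b\neq a$, so an edge belongs to the basis precisely when this conjunction fails. Taking the logical negation yields the implication ``$x_i=i-1$ for all $1\leq i<b$ $\Longrightarrow$ $x_b=a$'', which is literally (\ref{condicionparalabase}). The equivalence with the permutation-level condition (\ref{condicionparalabasebis}) is then immediate from Remark \ref{recovering}: under $x=\sigma\cdot\mathcal{O}_b$ the equality $x_i=i-1$ corresponds to $\sigma(i)=i$, while $x_b=a$ corresponds to $\sigma(b)=b$. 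This settles part (1).

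For $F_m/\mathbb{Z}_m$ I would argue identically, but working throughout with the canonical representatives fixed in Remark \ref{loscanonicosydemas}. Here the selection rule was asymmetric: no type-$1$ edge is selected, whereas a type-$b$ edge with $b\geq2$ is selected iff its canonical representative satisfies $x_i=i-1$ for $1\leq i<b$ and $x_b\neq a$. Consequently \emph{every} type-$1$ critical edge survives into the basis, and its canonical representative automatically has $x_1=a$ (equivalently $\sigma(1)=1$), giving case (i). For $b\geq2$, negating the selection conjunction again reproduces (\ref{condicionparalabase}); combined with the automatic equality $x_1=0$ of a canonical representative with $\sigma(1)=1$, this is exactly case (ii). Thus the surviving critical edges match the stated list, and substitution into Theorem 2.5 of \cite{MR2171804} completes part (2).

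I do not expect a genuine computational obstacle, since the hard work was discharged by the two extension propositions; the one point deserving care is the asymmetric treatment of type-$1$ edges in the quotient. Because the forest covering (\ref{forestcovering}) collapses the $m$ trees $T_{c_1^i\sigma}$ to a single tree $T_{[\sigma]}$, the type-$1$ connecting edges that were needed downstairs in $F_m$ become redundant upstairs, so none of them is selected and \emph{all} of them reappear as basis elements. The sanity check that this is internally consistent is the Euler/edge count of Proposition \ref{contractilextensionmodm}, namely that exactly $(m-1)!-1$ edges are selected as recorded in (\ref{nuevototal}); once that count is in hand, the complementation argument above dovetails and the corollary follows.
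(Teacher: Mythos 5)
Your proposal is correct and coincides with the paper's own (essentially one-line) proof: the paper likewise invokes Theorem 2.5 of \cite{MR2171804} on the strength of Propositions \ref{contractilextension} and \ref{contractilextensionmodm}, with the basis consisting of the unselected critical edges, and your negation of the selection rules together with the translation between the conditions on the $x_i$ and on $\sigma$ via Remark \ref{recovering} is exactly the implicit bookkeeping the paper leaves to the reader.
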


Propositions \ref{groupmonoinmostofthecases}--\ref{variacionparatipo4} below describe the images of base elements under the group inclusion in (\ref{maps}). The answer is as simple as possible for a few of the base elements of $\pi_1(F_m)$ of type $b\geq2$, namely, those that are canonical representatives of $\mathbb{Z}_m$-orbits:

\begin{proposition}\label{groupmonoinmostofthecases}
Any base element $[x]\in\pi_1(F_m/\mathbb{Z}_m)$ of type $b$ with $b\geq 2$ lies in the image of the group monomorphism $\pi_1(F_m)\hookrightarrow\pi_1(F_m/\mathbb{Z}_m)$ with preimage given by its canonical representative $(x_1,\ldots,x_m)\in\pi_1(F_m)$. In particular $\theta([x])=0$, where $\theta$ is the classifying map in (\ref{maps}).
\end{proposition}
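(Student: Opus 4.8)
The plan is to show directly that the covering projection $\pi\colon F_m\to F_m/\mathbb{Z}_m$ carries the generator of $\pi_1(F_m)$ indexed by the canonical representative $x=(x_1,\ldots,x_m)$ to the generator of $\pi_1(F_m/\mathbb{Z}_m)$ indexed by $[x]$; this exhibits $[x]$ as the image of $x$ under the monomorphism in (\ref{maps}), and the vanishing of $\theta([x])$ then follows formally.

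First I would confirm that $x$ genuinely indexes a base element of $\pi_1(F_m)$. Since $[x]$ is a type-$b$ base element of $\pi_1(F_m/\mathbb{Z}_m)$ with $b\geq2$, its canonical representative $x=\sigma\cdot\mathcal{O}_b$ satisfies condition (\ref{condicionparalabase}) by Corollary \ref{bases}(2)(ii); by Corollary \ref{bases}(1) this is precisely the condition for $x$ to index a base element of $\pi_1(F_m)$. Moreover, because $b\geq2$, Remark \ref{loscanonicosydemas} (through (\ref{pasadoalcociente})) guarantees that \emph{both} the source and the target of $x$ are canonical representatives of their $\mathbb{Z}_m$-orbits, i.e.\ $\sigma_\iota(1)=\sigma_\varphi(1)=1$.

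The structural heart of the argument is to isolate a subtree of the maximal tree $\mathcal{T}_m$ that $\pi$ maps isomorphically onto the maximal tree $\mathcal{T}_m/\mathbb{Z}_m$. I would restrict attention to the \emph{canonical} trees $T_\sigma$ of $\mathcal{F}_m$ with $\sigma(1)=1$ together with the selected critical edges of type $b\geq2$. By Proposition \ref{permutationofcollapsibleedges} and the compatibility recorded in Remark \ref{condiciondecompatibilidad}, $\pi$ sends each such $T_\sigma$ isomorphically onto $T_{[\sigma]}$ and each such selected edge isomorphically onto its orbit; furthermore, the connecting process in the proof of Proposition \ref{contractilextension}, when restricted to permutations fixing $1$, uses \emph{only} type-$\geq2$ edges, since the type-$1$ step is vacuous there. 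Hence this canonical collection is connected, and by the edge count underlying Proposition \ref{contractilextensionmodm} it is exactly a lift of $\mathcal{T}_m/\mathbb{Z}_m$. Choosing the base point $\star$ inside $T_1$ (its image serving as base point for $F_m/\mathbb{Z}_m$), I would represent the generator $x$ by a loop joining $\star$ to the source and target of $x$ through paths lying entirely in this canonical subtree and then traversing $x$; this is legitimate because the homotopy class of the generator is independent of the joining paths chosen within $\mathcal{T}_m$.

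Applying $\pi$ to this loop yields a loop in $F_m/\mathbb{Z}_m$ lying in $\mathcal{T}_m/\mathbb{Z}_m$ apart from the single edge $[x]$, which is exactly the loop defining the generator $[x]$. Thus the monomorphism of (\ref{maps}) sends $x$ to $[x]$, so $[x]$ lies in its image with preimage its canonical representative. Finally, since this monomorphism is the kernel inclusion in the short exact sequence $1\to\pi_1(F_m)\to\pi_1(F_m/\mathbb{Z}_m)\xrightarrow{\theta}\mathbb{Z}_m\to1$, the composite with $\theta$ is trivial, whence $\theta([x])=0$. I expect the main obstacle to be the structural identification of the previous paragraph: verifying that the canonical trees and the selected type-$\geq2$ edges assemble into a genuine subtree of $\mathcal{T}_m$ projecting isomorphically onto $\mathcal{T}_m/\mathbb{Z}_m$, so that the joining paths for both endpoints of $x$ can be kept clear of the non-canonical trees and of the type-$1$ selected edges.
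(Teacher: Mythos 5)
Your proposal is correct and follows essentially the same route as the paper's proof: both rest on the facts that collapsible edges project compatibly by the $\Sigma_m$-equivariance of the Farley--Sabalka field, that selected critical edges of type $b\geq 2$ project to selected edges in the quotient (Remark \ref{condiciondecompatibilidad}), and that for a canonical representative the type-$1$ step of the connecting process in Proposition \ref{contractilextension} is never needed, so the loop representing $x$ projects to the loop representing $[x]$. Your explicit construction of a subtree of $\mathcal{T}_m$ (canonical trees plus selected type-$\geq2$ edges) mapping isomorphically onto $\mathcal{T}_m/\mathbb{Z}_m$ is just a more structural packaging of what the paper asserts directly about the joining paths, making explicit some details (connectivity, edge count, endpoints being canonical) that the paper leaves implicit.
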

\begin{proof}
Following the strategy in Remark \ref{choosing}, it suffices to observe that the canonical joining paths in the extended maximal tree for $F_m$ are made of collapsible edges and selected critical edges and, under the current hypotheses, map under the canonical projection mod $\mathbb{Z}_m$ into (perhaps non-canonical) joining paths in the extended maximal tree for $F_m/\mathbb{Z}_m$. This is so for collapsible edges in view of the $\Sigma_m$-equivariance of Farley-Sabalka's gradient field. The corresponding situation for selected edges is a consequence of the process in the second part of the proof of Proposition \ref{contractilextension}. Indeed, since $(x_1,\ldots,x_m)$ is a canonical representative, the first part of the process (with $b=1$) in that proof is not needed, so the assertion follows directly from the first part of Remark \ref{condiciondecompatibilidad}.
\end{proof}

Although selection of canonical representative settles a 1-1 correspondence between base elements of $\pi_1(F_m)$ of type 1 and base elements of $\pi_1(F_m/\mathbb{Z}_m)$ of type 1, the simple phenomenon in the proof of Proposition \ref{groupmonoinmostofthecases} for $b\geq2$ does no hold in the case $b=1$. The best way to state the new phenomenon is in terms of the notation in (\ref{naturalmenteordenado}). Namely, recall from Remark \ref{recovering} that, for a permutation $\sigma\in\Sigma_m$, $\sigma\cdot\mathcal{O}_b$ is the unique type-$b$ critical edge of $F_m$ starting at a vertex with associated permutation $\sigma$. Recall also that $\sigma\cdot\mathcal{O}_b$ has been selected to connect $\mathcal{F}_m$ if $\sigma(b)\neq b$ but $\sigma(i)=i$ for $1\leq i<b$; otherwise $\sigma\cdot\mathcal{O}_b$ becomes a base generator of $\pi_1(F_m)$. For instance, $\sigma\cdot\mathcal{O}_1$ is a base generator if and only if $\sigma(1)=1$, in which case $\sigma\cdot\mathcal{O}_1$ is the canonical representative of the corresponding base element $[\sigma\cdot\mathcal{O}_1]\in\pi_1(F_m/\mathbb{Z}_m)$. Bearing this in mind, the proofs of Propositions \ref{variacionparatipo1}--\ref{variacionparatipo4} can be given following the lines of the proof of Proposition \ref{groupmonoinmostofthecases}, except that now Lemma \ref{associatedpermutationsofacriticaedge} and its mod $\mathbb{Z}_m$ analogue in Remark \ref{associatedpermutationsofacriticaedgemodm} are used to keep track of portions of joining paths coming from selected critical edges of type 1. In particular, the permutations $c_b$ ($1\leq b\leq m$) in Lemma \ref{associatedpermutationsofacriticaedge} play an important role. Proof details are left as a straightforward exercise for the reader.  Note that most of the resulting $\mathbb{Z}_m$-representatives in (\ref{formulavariacionparatipo1})--(\ref{formulavariacionparatipo4}) below are not canonical.

\begin{proposition}\label{variacionparatipo1}
The image of a type-1 base element $\sigma\cdot\mathcal{O}_1\in\pi_1(F_m)$ (thus $\sigma(1)=1$) under the group monomorphism $\pi_1(F_m)\hookrightarrow\pi_1(F_m/\mathbb{Z}_m)$ is given by the product
\begin{equation}\label{formulavariacionparatipo1}
\prod_{i=1}^{m}\left[\rule{0mm}{4mm}\sigma c_1^{-i+1}\cdot \mathcal{O}_1\right].
\end{equation}
\end{proposition}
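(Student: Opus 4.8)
The plan is to apply the general strategy of Remark \ref{choosing} to the type-1 base element $\sigma\cdot\mathcal{O}_1$, tracking carefully how its representing loop in the extended maximal tree $\mathcal{T}_m$ for $F_m$ projects under the canonical map $F_m\to F_m/\mathbb{Z}_m$ into a loop in the extended maximal tree for $F_m/\mathbb{Z}_m$. The key point, flagged in the paragraph preceding the statement, is that type-1 critical edges behave differently from those of type $b\geq2$: no type-1 edge is selected to connect $\mathcal{F}_m/\mathbb{Z}_m$ (see Remark \ref{loscanonicosydemas} and the selection convention following it), so the portions of the joining path that came from collapsible edges and type-1 edges in $F_m$ do \emph{not} simply collapse downstairs. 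Instead, one must re-express the projected loop in terms of the chosen base downstairs, and this is where the cyclic permutations $c_1=(12\cdots m)$ enter through Lemma \ref{associatedpermutationsofacriticaedge} and its quotient analogue \eqref{pasadoalcociente}.

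First I would fix the canonical joining path in $\mathcal{T}_m$ that represents the generator $\sigma\cdot\mathcal{O}_1$: it runs from the base point $z_1$ (identity coset), through a sequence of selected critical edges and collapsible edges connecting $T_1$ to $T_\sigma$, then traverses $\sigma\cdot\mathcal{O}_1$ itself, and returns. Projecting to $F_m/\mathbb{Z}_m$, the edge $\sigma\cdot\mathcal{O}_1$ maps to the critical edge $[\sigma\cdot\mathcal{O}_1]$, but because type-1 edges are unavailable for connecting the quotient forest, the return portion of the path must be rewritten. The second step is to use \eqref{pasadoalcociente}, which says that crossing a type-$b$ critical edge multiplies the associated permutation on the right by $c_b^{-1}$; for $b=1$ this is multiplication by $c_1^{-1}$. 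Applying this repeatedly shows that the trees $T_\sigma, T_{\sigma c_1^{-1}}, \ldots, T_{\sigma c_1^{-(m-1)}}$ all lie in the \emph{same} $\mathbb{Z}_m$-orbit and hence collapse to a single tree $T_{[\sigma]}$ downstairs, while upstairs they were distinct. Consequently the single loop upstairs unwinds into the ordered product of the $m$ type-1 generators $[\sigma c_1^{-i+1}\cdot\mathcal{O}_1]$ as $i$ runs from $1$ to $m$, which is exactly \eqref{formulavariacionparatipo1}.

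Concretely, I would argue that as the projected path winds around, each full pass corresponds to reading off one type-1 critical edge $\sigma c_1^{-i+1}\cdot\mathcal{O}_1$ whose source has associated permutation $\sigma c_1^{-i+1}$; by Remark \ref{recovering} these edges are completely determined by their associated permutation and type, so the bookkeeping reduces to tracking the permutation via right-multiplication by $c_1^{-1}$. The indeterminacy in the cosets \eqref{coset} is precisely a left $c_1$-indeterminacy, which is compatible with the right action of Remark \ref{rightaction}, so the product is well defined as written. Matching the order of the factors to the orientation conventions fixed before \eqref{naturalmenteordenado} confirms that the product runs from $i=1$ to $i=m$ rather than in reverse.

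The main obstacle will be the careful combinatorial bookkeeping of the joining paths: one must verify that, after projection, the only surviving non-collapsed contributions are exactly the $m$ type-1 generators in the stated order, with no stray collapsible-edge or selected-edge contributions and no canonicalization mismatches. In particular, establishing that the $m$ trees $T_{\sigma c_1^{-i+1}}$ glue into the single orbit $T_{[\sigma]}$ and that the loop therefore records precisely one generator per pass --- neither over- nor under-counting --- requires invoking the connectivity argument from the second part of the proof of Proposition \ref{contractilextension} together with the fact that type-1 edges are unselected downstairs. The non-canonicity of most representatives $\sigma c_1^{-i+1}$ (noted just before the statement) is what forces the explicit $c_1$-twist to appear, and getting its exponent and direction right is the delicate part.
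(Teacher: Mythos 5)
Your overall skeleton is the one the paper intends: represent $\sigma\cdot\mathcal{O}_1$ by its loop in the extended maximal tree of $F_m$, project to $F_m/\mathbb{Z}_m$, and exploit the fact that no type-1 critical edge is selected downstairs, so that type-1 portions of the joining path survive as generator factors. However, the mechanism you use to produce the $m$ factors is wrong. You claim that, by repeated application of (\ref{pasadoalcociente}), the trees $T_\sigma, T_{\sigma c_1^{-1}},\ldots,T_{\sigma c_1^{-(m-1)}}$ ``all lie in the same $\mathbb{Z}_m$-orbit and hence collapse to a single tree $T_{[\sigma]}$ downstairs'', and that the projected loop then winds around this single tree, reading off one generator per pass. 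This conflates the left $\mathbb{Z}_m$-action with right multiplication by $c_1^{-1}$: by (\ref{coset}) and Remark \ref{associatedpermutationsofacriticaedgemodm}, the deck transformations identify $T_\sigma$ with the trees $T_{c_1^i\sigma}$ (\emph{left} multiplication), whereas crossing a type-1 critical edge changes the associated permutation by \emph{right} multiplication by $c_1^{-1}$ (Lemma \ref{associatedpermutationsofacriticaedge}). In general $\sigma c_1^{-1}\notin[\sigma]$; for instance, with $m=4$ and $\sigma=(23)$, conjugating $c_1^{-1}$ by $\sigma$ yields the $4$-cycle $(1423)\notin\langle c_1\rangle$, so $T_\sigma$ and $T_{\sigma c_1^{-1}}$ project to \emph{distinct} trees of $\mathcal{F}_m/\mathbb{Z}_m$. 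Worse, your intermediate claim contradicts the formula you are proving: if all the cosets $[\sigma c_1^{-i+1}]$ coincided, then all the factors in (\ref{formulavariacionparatipo1}) would be the same edge $[\sigma\cdot\mathcal{O}_1]$ and the image would be $[\sigma\cdot\mathcal{O}_1]^m$, which is not the assertion.

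The correct mechanism, i.e. the one obtained by following the proof of Proposition \ref{groupmonoinmostofthecases} while using Lemma \ref{associatedpermutationsofacriticaedge} as the paper indicates, is as follows. The target of $\sigma\cdot\mathcal{O}_1$ lies in $T_{\sigma c_1^{-1}}$, and since $(\sigma c_1^{-i})(1)\neq1$ for $1\leq i\leq m-1$, these vertices of the collapsed graph meet no selected edges of type $\geq2$; hence the unique path in the extended maximal tree of $F_m$ from that target back to the base point must first traverse, in order and with their given orientations, the chain of \emph{selected} type-1 critical edges $\sigma c_1^{-1}\cdot\mathcal{O}_1,\;\sigma c_1^{-2}\cdot\mathcal{O}_1,\;\ldots,\;\sigma c_1^{-(m-1)}\cdot\mathcal{O}_1$, whose last target lies in $T_{\sigma c_1^{-m}}=T_\sigma$, and only then proceed to the base point through collapsible edges and selected edges of type $\geq2$. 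Under the projection, collapsible edges and selected edges of type $\geq2$ land in the maximal tree of $F_m/\mathbb{Z}_m$ (equivariance of the gradient field plus Remark \ref{condiciondecompatibilidad}), while each of the $m-1$ selected type-1 edges becomes a non-tree edge, i.e. the generator $[\sigma c_1^{-i}\cdot\mathcal{O}_1]$ of $\pi_1(F_m/\mathbb{Z}_m)$. Reading the projected loop in order therefore gives exactly $[\sigma\cdot\mathcal{O}_1]\cdot[\sigma c_1^{-1}\cdot\mathcal{O}_1]\cdots[\sigma c_1^{-(m-1)}\cdot\mathcal{O}_1]$, which is (\ref{formulavariacionparatipo1}), with the $m$ factors pairwise distinct generators rather than powers of a single one.
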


\begin{proposition}\label{variacionparatipo2}
The image of a base element $\sigma\cdot\mathcal{O}_b\in\pi_1(F_m)$ with $1<\sigma(1)<b$ under the group monomorphism $\pi_1(F_m)\hookrightarrow\pi_1(F_m/\mathbb{Z}_m)$ is given by the product
\begin{equation}\label{formulavariacionparatipo2}
\left(\prod_{i=1}^{\sigma(1)-1}\left[\rule{0mm}{4mm}\sigma c_1^{-i+1}\cdot \mathcal{O}_1\right]\right)^{\!\!-1}\hspace{-1.7mm}\left[\rule{0mm}{4mm}\sigma\cdot\mathcal{O}_b\right]\hspace{.3mm}\left(\prod_{i=1}^{\sigma(1)-1}\left[\rule{0mm}{4mm}\sigma c_b^{-1} c_1^{-i+1}\cdot \mathcal{O}_1\right]\right).
\end{equation}
\end{proposition}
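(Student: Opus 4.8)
The plan is to follow the template already laid out in the proof of Proposition~\ref{groupmonoinmostofthecases}, but this time tracking the selected type-$1$ critical edges that now genuinely contribute to the joining paths. Concretely, I would start from the canonical joining path in the extended maximal tree $\mathcal{T}_m$ of $F_m$ from the base point to the two endpoints of the critical edge $\sigma\cdot\mathcal{O}_b$, push it forward under the projection $F_m\to F_m/\mathbb{Z}_m$, and then rewrite the resulting (generally non-canonical) path in $\mathcal{T}_m/\mathbb{Z}_m$ as a product of the base generators $[\,\tau\cdot\mathcal{O}_1\,]$ of type $1$ together with $[\,\sigma\cdot\mathcal{O}_b\,]$. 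The hypothesis $1<\sigma(1)<b$ is what controls how many type-$1$ correction edges appear: since $\sigma(1)>1$, the edge $\sigma\cdot\mathcal{O}_1$ was \emph{selected} (not a generator) at the $F_m$ level, whereas its $\mathbb{Z}_m$-orbit is \emph{not} selected at the quotient level, so the discrepancy between the two forest-to-tree extensions is exactly measured by a string of type-$1$ generators.

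First I would use Remark~\ref{recovering} to pin down the precise tuple $\sigma\cdot\mathcal{O}_b$ and its source vertex $\iota=\sigma\cdot(0,1,\dots,b-2,m-1,m,\dots,2m-b-1)$, and then invoke the connecting process in the second part of the proof of Proposition~\ref{contractilextension} to write down, step by step, the selected type-$1$ edges one traverses to travel from the tree $T_{[\sigma]}$ to the tree $T_{[1]}$ of canonical representatives in the quotient forest. At each such step Lemma~\ref{associatedpermutationsofacriticaedge} and its quotient analogue (\ref{pasadoalcociente}) tell me precisely how the associated permutation changes, namely by right multiplication by $c_1^{-1}$; iterating this $\sigma(1)-1$ times is what produces the partial product $\prod_{i=1}^{\sigma(1)-1}[\,\sigma c_1^{-i+1}\cdot\mathcal{O}_1\,]$. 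The left-hand factor is this product inverted because it corrects the path to the source $\iota$, while the right-hand factor is the analogous product for the target $\varphi$; crucially, Lemma~\ref{associatedpermutationsofacriticaedge} gives $\sigma_\varphi=\sigma c_b^{-1}$, which is exactly why the right-hand correction string is indexed by $\sigma c_b^{-1} c_1^{-i+1}$ rather than by $\sigma c_1^{-i+1}$.

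The main obstacle, as flagged in the paragraph preceding Proposition~\ref{variacionparatipo1}, is the bookkeeping of \emph{which} type-$1$ correction edges are needed and in what order, together with confirming that the collapsible-edge portions of the joining path continue to project to collapsible edges by $\Sigma_m$-equivariance and hence contribute nothing to the word. The inequality $1<\sigma(1)<b$ is precisely the regime in which the source-correction string and the target-correction string have the \emph{same} length $\sigma(1)-1$ (because $\sigma(1)<b$ forces $c_b$ to fix the symbol $1$ through $\sigma(1)$, so the target's first coordinate is governed by the same count), which is what makes the two products in (\ref{formulavariacionparatipo2}) symmetric. I would verify this length-matching directly from the cycle structure of $c_b=(b\cdots m)$, and then check the boundary consistency by confirming that the telescoping product collapses correctly to the canonical representative at the base point, yielding the stated word.
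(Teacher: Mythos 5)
Your proposal follows exactly the route the paper prescribes: the paper leaves Propositions \ref{variacionparatipo1}--\ref{variacionparatipo4} as an exercise, saying only that one should repeat the argument of Proposition \ref{groupmonoinmostofthecases} while using Lemma \ref{associatedpermutationsofacriticaedge} and its quotient analogue in Remark \ref{associatedpermutationsofacriticaedgemodm} to track the joining-path segments that pass through selected type-$1$ critical edges, and this is precisely what you do. Your bookkeeping is also correct where it matters: each traversed type-$1$ selected edge shifts the associated permutation by right multiplication by $c_1^{-1}$, and since $c_b=(b\cdots m)$ fixes $\sigma(1)$ when $\sigma(1)<b$, both the source and target corrections require exactly $\sigma(1)-1$ steps, yielding the two equal-length products in (\ref{formulavariacionparatipo2}).
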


\begin{proposition}\label{variacionparatipo3} The image of a base element $\sigma\cdot\mathcal{O}_b\in\pi_1(F_m)$ with $1<b=\sigma(1)$ under the group monomorphism $\pi_1(F_m)\hookrightarrow\pi_1(F_m/\mathbb{Z}_m)$ is given by the product
\begin{equation}\label{formulavariacionparatipo3}
\left(\prod_{i=1}^{\sigma(1)-1}\left[\rule{0mm}{4mm}\sigma c_1^{-i+1}\cdot \mathcal{O}_1\right]\right)^{\!\!-1}\hspace{-1.7mm}\left[\rule{0mm}{4mm}\sigma\cdot\mathcal{O}_{b}\right]\hspace{.3mm}\left(\prod_{i=1}^{m-1}\left[\rule{0mm}{4mm}\sigma c_{b}^{-1} c_1^{-i+1}\cdot \mathcal{O}_1\right]\right).
\end{equation}
\end{proposition}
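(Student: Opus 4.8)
The plan is to follow verbatim the \emph{Morsefied} Poincar\'e strategy employed in the proof of Proposition \ref{groupmonoinmostofthecases}, the only new ingredient being a careful bookkeeping of the selected type-1 edges that appear along the relevant joining paths. First I would note that $x=\sigma\cdot\mathcal{O}_b$ is genuinely a base element of $\pi_1(F_m)$, as the hypothesis $\sigma(1)=b>1$ makes the premise of (\ref{condicionparalabasebis}) fail, and realize it as the loop $\ell_x=P\cdot x\cdot Q$ in $F_m$, where $P$ is the canonical joining path in the maximal tree $\mathcal{T}_m$ from the base point $\star$ to the source of $x$, and $Q$ the one from the target of $x$ back to $\star$. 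By Lemma \ref{associatedpermutationsofacriticaedge} the source of $x$ has associated permutation $\sigma$ and the target has $\sigma c_b^{-1}$, so that $\iota([x])$ is read off from the projected loop $\overline{\ell_x}=\overline P\cdot\overline x\cdot\overline Q$ in $F_m/\mathbb{Z}_m$.

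Next I would describe the two joining paths by means of the connecting process in the proof of Proposition \ref{contractilextension}: the unique descending path in $\mathcal{T}_m$ from a vertex with associated permutation $\tau$ down to $z_1$ starts with exactly $\tau(1)-1$ selected type-1 edges (those successively lowering the first coordinate), namely $\tau c_1^{-i+1}\cdot\mathcal{O}_1$ for $i=1,\ldots,\tau(1)-1$ by Lemma \ref{associatedpermutationsofacriticaedge}, and is then completed with selected edges of type $\geq2$. Projecting mod $\mathbb{Z}_m$, the collapsible edges and the selected edges of type $\geq2$ fall into the maximal tree $\mathcal{T}_m/\mathbb{Z}_m$ and contribute trivially --- this is exactly the mechanism behind Proposition \ref{groupmonoinmostofthecases}, via Remark \ref{condiciondecompatibilidad} and the $\Sigma_m$-equivariance of the gradient field --- whereas each selected type-1 edge of $F_m$ projects to an \emph{unselected} type-1 critical edge of $F_m/\mathbb{Z}_m$ (no type-1 edge is selected in the quotient) and hence contributes a generator.

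I would then assemble the word. Because the source has $\sigma(1)=b$, the path $P$ is the reversal of a descending path carrying $\sigma(1)-1$ selected type-1 edges; after projection and inversion this gives the left factor $\left(\prod_{i=1}^{\sigma(1)-1}[\sigma c_1^{-i+1}\cdot\mathcal{O}_1]\right)^{-1}$. The middle edge contributes $[\sigma\cdot\mathcal{O}_b]$, which is the corresponding base generator when its $\mathbb{Z}_m$-orbit is unselected and is trivial otherwise, in either case matching the displayed formula. For $Q$ the decisive computation is the value of the target permutation at $1$: using the convention (\ref{firstsecond}) one finds $(\sigma c_b^{-1})(1)=c_b^{-1}(\sigma(1))=c_b^{-1}(b)=m$, so the descending path from the target begins with $m-1$ selected type-1 edges $\sigma c_b^{-1}c_1^{-i+1}\cdot\mathcal{O}_1$, $i=1,\ldots,m-1$, yielding the right factor $\prod_{i=1}^{m-1}[\sigma c_b^{-1}c_1^{-i+1}\cdot\mathcal{O}_1]$. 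Concatenation produces (\ref{formulavariacionparatipo3}); the $c_1$- and $c_b$-shifts in the indices are controlled by the mod $\mathbb{Z}_m$ version of Lemma \ref{associatedpermutationsofacriticaedge} recorded in Remark \ref{associatedpermutationsofacriticaedgemodm}.

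The hard part will be precisely the identity $(\sigma c_b^{-1})(1)=m$, which is what distinguishes this case from Proposition \ref{variacionparatipo2}: there $\sigma(1)<b$ forces $c_b^{-1}$ to fix $\sigma(1)$, so the target path only needs $\sigma(1)-1$ type-1 edges, whereas here $c_b^{-1}$ sends $b$ to $m$ and the target path must run through all $m-1$ of them. The remaining delicate point is the orientation bookkeeping --- making sure that $P$, traversed from $\star$ to the source, contributes the inverse of the descending reading while $Q$ contributes it directly --- together with the verification that every type-$\geq2$ edge met along both paths indeed dies in the quotient tree, which is a verbatim reuse of the argument in Proposition \ref{groupmonoinmostofthecases}.
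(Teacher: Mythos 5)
Your proposal is correct and coincides with the proof the paper intends: the paper merely records that Propositions \ref{variacionparatipo1}--\ref{variacionparatipo4} follow the lines of the proof of Proposition \ref{groupmonoinmostofthecases}, with Lemma \ref{associatedpermutationsofacriticaedge} and Remark \ref{associatedpermutationsofacriticaedgemodm} used to track the selected type-1 edges along the joining paths, and leaves the details as an exercise. Your write-up supplies exactly those details --- the vacuous-premise check that $\sigma\cdot\mathcal{O}_b$ is a base element, the inverse/direct orientation bookkeeping for $P$ and $Q$, the disappearance of collapsible and selected type-$\geq2$ edges in the quotient tree via Remark \ref{condiciondecompatibilidad}, and above all the decisive computation $(\sigma c_b^{-1})(1)=c_b^{-1}(b)=m$, which is precisely what forces the right-hand product in (\ref{formulavariacionparatipo3}) to run over $m-1$ type-1 generators instead of $\sigma(1)-1$.
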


\begin{proposition}\label{variacionparatipo4}
The image of a base element $\sigma\cdot\mathcal{O}_b\in\pi_1(F_m)$ with $1<b<\sigma(1)$ under the group monomorphism $\pi_1(F_m)\hookrightarrow\pi_1(F_m/\mathbb{Z}_m)$ is given by the product
\begin{equation}\label{formulavariacionparatipo4}
\left(\prod_{i=1}^{\sigma(1)-1}\left[\rule{0mm}{4mm}\sigma c_1^{-i+1}\cdot \mathcal{O}_1\right]\right)^{\!\!-1}\hspace{-1.7mm}\left[\rule{0mm}{4mm}\sigma\cdot\mathcal{O}_b\right]\hspace{.3mm}\left(\prod_{i=1}^{\sigma(1)-2}\left[\rule{0mm}{4mm}\sigma c_b^{-1} c_1^{-i+1}\cdot \mathcal{O}_1\right]\right).
\end{equation}
\end{proposition}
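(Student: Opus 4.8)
The plan is to run, essentially verbatim, the argument proving Proposition \ref{groupmonoinmostofthecases}, but now keeping explicit track of the selected critical edges of type $1$ met along the joining paths. Concretely, I would represent the base generator $x=\sigma\cdot\mathcal{O}_b$ by the loop of Remark \ref{choosing}: travel from the base point $\star$ to the source $\iota$ of $x$ along the maximal tree $\mathcal{T}_m$ of $F_m$ produced in Proposition \ref{contractilextension}, cross $x$ from $\iota$ to its target $\varphi$, and return from $\varphi$ to $\star$ along $\mathcal{T}_m$. Projecting this loop to $F_m/\mathbb{Z}_m$ and reading off the induced word in the generators of Corollary \ref{bases} gives the image under the monomorphism in (\ref{maps}). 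The decisive observation is that the only edges contributing to that word are the selected critical edges of type $1$ together with the middle edge $x$: collapsible edges project to collapsible edges by $\Sigma_m$-equivariance, and selected critical edges of type $\geq2$ project to selected (hence tree) edges by the first sentence of Remark \ref{condiciondecompatibilidad}, so all of these are absorbed by the collapse; by contrast, no type-$1$ edge is selected in $F_m/\mathbb{Z}_m$, so each one crossed yields a genuine type-$1$ generator $[\,\cdot\,\mathcal{O}_1]$.

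Next I would identify and orient these type-$1$ contributions through the connecting process in the proof of Proposition \ref{contractilextension} together with Lemma \ref{associatedpermutationsofacriticaedge}. A type-$1$ edge issuing from a vertex with associated permutation $\rho$ runs to a vertex with associated permutation $\rho c_1^{-1}$; under the composition convention (\ref{firstsecond}) this sends the first-coordinate value $\rho(1)$ to $c_1^{-1}(\rho(1))=\rho(1)-1$ whenever $\rho(1)\geq2$. Hence the subpath of $\mathcal{T}_m$ from the identity tree $T_1$ to the tree $T_\sigma$ meets exactly the $\sigma(1)-1$ selected type-$1$ edges $\sigma c_1^{-i+1}\cdot\mathcal{O}_1$ $(i=1,\ldots,\sigma(1)-1)$, and the segment $\star\to\iota$ traverses them against their orientation; reversing and inverting produces the first factor $\left(\prod_{i=1}^{\sigma(1)-1}[\sigma c_1^{-i+1}\cdot\mathcal{O}_1]\right)^{-1}$ of (\ref{formulavariacionparatipo4}). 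Crossing $x$ then contributes the projected middle edge $[\sigma\cdot\mathcal{O}_b]$, which is the corresponding type-$b$ generator of $\pi_1(F_m/\mathbb{Z}_m)$ when that edge is unselected there (and is absorbed into the tree otherwise).

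The step that singles out case \ref{variacionparatipo4} among Propositions \ref{variacionparatipo2}--\ref{variacionparatipo4} is the count of type-$1$ edges along the return path $\varphi\to\star$, which is governed by the first-coordinate value of the target permutation $\sigma_\varphi=\sigma c_b^{-1}$ (Lemma \ref{associatedpermutationsofacriticaedge}). Under (\ref{firstsecond}) one has $(\sigma c_b^{-1})(1)=c_b^{-1}(\sigma(1))$, and since $c_b=(b\cdots m)$ satisfies $c_b^{-1}(v)=v-1$ precisely for $v>b$, the hypothesis $1<b<\sigma(1)$ yields $(\sigma c_b^{-1})(1)=\sigma(1)-1$. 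The return path therefore meets the $\sigma(1)-2$ selected type-$1$ edges $\sigma c_b^{-1}c_1^{-i+1}\cdot\mathcal{O}_1$ $(i=1,\ldots,\sigma(1)-2)$, traversed with their orientation, assembling the final factor of (\ref{formulavariacionparatipo4}); the competing values $c_b^{-1}(\sigma(1))=\sigma(1)$ when $\sigma(1)<b$ and $c_b^{-1}(\sigma(1))=m$ when $\sigma(1)=b$ are exactly what produce the differing upper limits in Propositions \ref{variacionparatipo2} and \ref{variacionparatipo3}. Concatenating the three pieces and passing to the quotient generators by means of Remark \ref{associatedpermutationsofacriticaedgemodm} yields (\ref{formulavariacionparatipo4}).

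I expect the principal difficulty to be bookkeeping rather than conceptual. One must keep the reversed-composition convention (\ref{firstsecond}) consistent throughout, assign inverses correctly according to whether each type-$1$ edge is traversed with or against its orientation (this is why the first factor is inverted and the second is not), confirm that the type-$\geq2$ selected edges and all intra-tree collapsible edges genuinely drop out of the projected word, and keep in mind that the middle factor $[\sigma\cdot\mathcal{O}_b]$ records the projected type-$b$ edge, which need not be a non-trivial generator of $\pi_1(F_m/\mathbb{Z}_m)$ for every $\sigma$ satisfying $1<b<\sigma(1)$.
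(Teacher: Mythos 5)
Your proposal is correct and follows exactly the route the paper intends: it reruns the joining-path argument of Proposition \ref{groupmonoinmostofthecases}, using Lemma \ref{associatedpermutationsofacriticaedge} (and Remark \ref{associatedpermutationsofacriticaedgemodm}) to track the selected type-$1$ edges, with the case distinction $c_b^{-1}(\sigma(1))=\sigma(1)-1$ for $1<b<\sigma(1)$ producing the upper limit $\sigma(1)-2$ in the final factor --- precisely the ``straightforward exercise'' the paper leaves to the reader, including the correct orientation signs and the caveat that the middle factor $\left[\rule{0mm}{3.5mm}\sigma\cdot\mathcal{O}_b\right]$ may be trivial when its canonical representative is a selected edge of $F_m/\mathbb{Z}_m$.
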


\begin{corollary}\label{auxi}
\begin{enumerate}
\item $\iota(c_1^r\cdot\mathcal{O}_m)=[\mathcal{O}_1]^{-r}\cdot[\mathcal{O}_m]\cdot[\mathcal{O}_1]^r$, for $0\leq r<m$.
\item $[\mathcal{O}_1]\cdot \iota(c_1^r\cdot \mathcal{O}_m)\cdot [\mathcal{O}_1]^{-1}=\iota(c_1^{r-1}\cdot \mathcal{O}_m)$, for $1\leq r<m$.
\end{enumerate}
\end{corollary}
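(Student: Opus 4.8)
The plan is to derive part (1) directly from Propositions \ref{groupmonoinmostofthecases}--\ref{variacionparatipo3} by specializing $\sigma=c_1^r$ and $b=m$, and then to obtain part (2) as a purely algebraic consequence of part (1). The whole argument rests on two simplifications that occur precisely because both the acting permutation $\sigma=c_1^r$ and the subgroup $\mathbb{Z}_m\subset\Sigma_m$ are generated by $c_1$. First, $c_b=c_m=1$ when $b=m$, so the factors $c_b^{-1}$ disappear from the product formulas. Second, since $c_1^j\in\mathbb{Z}_m$ for every $j$, we have $[c_1^j\cdot\mathcal{O}_1]=[\mathcal{O}_1]$ and $[c_1^r\cdot\mathcal{O}_m]=[\mathcal{O}_m]$ in $\pi_1(F_m/\mathbb{Z}_m)$, because $c_1^j\cdot\mathcal{O}_1$ and $c_1^r\cdot\mathcal{O}_m$ lie in the $\mathbb{Z}_m$-orbits of $\mathcal{O}_1$ and $\mathcal{O}_m$, respectively.

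For part (1) I would split according to the value of $\sigma(1)=c_1^r(1)=r+1$ relative to $b=m$. When $r=0$ the edge $\mathcal{O}_m$ is the canonical representative of a type-$m$ base element (with $m\geq2$), so Proposition \ref{groupmonoinmostofthecases} gives $\iota(\mathcal{O}_m)=[\mathcal{O}_m]$, matching the asserted formula with $r=0$. When $1\leq r\leq m-2$ we have $1<\sigma(1)<b$, so Proposition \ref{variacionparatipo2} applies; substituting $\sigma=c_1^r$, using $c_b^{-1}=1$ and $\sigma c_1^{-i+1}=c_1^{r-i+1}$, both the leading and trailing products run over $i=1,\ldots,r$ and collapse to $[\mathcal{O}_1]^r$, while the middle factor is $[c_1^r\cdot\mathcal{O}_m]=[\mathcal{O}_m]$; this yields $[\mathcal{O}_1]^{-r}[\mathcal{O}_m][\mathcal{O}_1]^r$. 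When $r=m-1$ we instead have $\sigma(1)=m=b$, so Proposition \ref{variacionparatipo3} governs the computation; here both products run over $i=1,\ldots,m-1$, again collapsing to $[\mathcal{O}_1]^{m-1}$, and the same conjugation formula results. This establishes part (1) for all $0\leq r<m$.

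Part (2) then follows by a one-line computation: applying part (1) twice,
\begin{align*}
[\mathcal{O}_1]\cdot\iota(c_1^r\cdot\mathcal{O}_m)\cdot[\mathcal{O}_1]^{-1}
&=[\mathcal{O}_1]\cdot[\mathcal{O}_1]^{-r}[\mathcal{O}_m][\mathcal{O}_1]^{r}\cdot[\mathcal{O}_1]^{-1}\\
&=[\mathcal{O}_1]^{-(r-1)}[\mathcal{O}_m][\mathcal{O}_1]^{r-1}=\iota(c_1^{r-1}\cdot\mathcal{O}_m),
\end{align*}
valid for $1\leq r<m$. The only place demanding genuine care is the bookkeeping in part (1): one must correctly decide which of Propositions \ref{variacionparatipo2} and \ref{variacionparatipo3} governs a given $r$ (the dividing line being $\sigma(1)=b$, i.e.\ $r=m-1$), and verify that the ranges of the two products coincide after specialization, so that the conjugating words are genuinely mutually inverse. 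Once the collapse $[c_1^j\cdot\mathcal{O}_1]=[\mathcal{O}_1]$ is in hand, everything reduces to counting factors, namely $r$ in the first regime and $m-1$ in the second.
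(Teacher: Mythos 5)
Your proof is correct and follows exactly the route the paper intends: the paper's own (one-line) proof states that item 1 follows from Propositions \ref{groupmonoinmostofthecases}, \ref{variacionparatipo2} and \ref{variacionparatipo3} and that item 2 follows from item 1, which is precisely your case split $r=0$, $1\leq r\leq m-2$, $r=m-1$ followed by the conjugation computation. Your write-up simply supplies the bookkeeping (the collapse $[c_1^j\cdot\mathcal{O}_1]=[\mathcal{O}_1]$, $[c_1^r\cdot\mathcal{O}_m]=[\mathcal{O}_m]$, and $c_m=1$) that the paper leaves implicit.
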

\begin{proof}
Item 1 follows from Propositions \ref{groupmonoinmostofthecases}, \ref{variacionparatipo2} and \ref{variacionparatipo3}; item 2 follows from item 1.
\end{proof}
 
As in the last sentence of Proposition \ref{groupmonoinmostofthecases}, (\ref{formulavariacionparatipo1})--(\ref{formulavariacionparatipo4}) yield relations among $\theta$-images of base elements of $\pi_1(F_m/\mathbb{Z}_m)$. This leads to a full description (in Proposition \ref{theta} below) of the classifying map $\theta\colon\pi_1(F_m/\mathbb{Z}_m)\to\mathbb{Z}_m$ in (\ref{maps}). Easy cases come from Proposition \ref{groupmonoinmostofthecases}: 
\begin{equation}\label{tetaapartirde2}
\mbox{\emph{all base generators of $\pi_1(F_m/\mathbb{Z}_m)$ of type $b\geq2$ have a trivial $\theta$-image.}}
\end{equation}
So, we really care about the $\theta$-image of type-1 base elements $[\sigma\cdot\mathcal{O}_1]\in\pi_1(F_m/\mathbb{Z}_m)$. For simplicity, we use the notation $\theta[\sigma]:=\theta[\sigma\cdot\mathcal{O}_1]$, where square brackets are kept to emphasize that, in what follows, $\sigma$ might not be the canonical representative of $[\sigma]\in\Sigma_m/\mathbb{Z}_m$. Instead, we will write $\theta\langle\sigma\rangle$ rather than $\theta[\sigma]$ to stress that $\sigma(1)=1$, i.e., that $\sigma\in\Sigma_{m-1}$ is the canonical representative for $[\sigma]\in\Sigma_m/\mathbb{Z}_m$.

\medskip
Since elements in (\ref{formulavariacionparatipo1})--(\ref{formulavariacionparatipo4}) have trivial $\theta$-image, the observations in the previous paragraph yield
\begin{align}\label{simplificada1}
\sum_{i=1}^{m}\theta[\sigma c_1^{-i+1}]&\,\,=\,\,0,\\
\sum_{i=1}^{\sigma(1)-1}\theta[\sigma c_1^{-i+1}]&\,\,=\,\,
\begin{cases}
\displaystyle\rule{0mm}{3mm}\sum_{i=1}^{\sigma(1)-1}\theta[\sigma c_b^{-1} c_1^{-i+1}], & \text{when \ }1<\sigma(1)<b, \\
\displaystyle\rule{0mm}{9mm}\;\sum_{i=1}^{m-1}\theta[\sigma c_b^{-1} c_1^{-i+1}], & \text{when \ }1<\sigma(1)=b,\\
\displaystyle\rule{0mm}{10mm}\sum_{i=1}^{\sigma(1)-2}\theta[\sigma c_b^{-1} c_1^{-i+1}], & \text{when \ }1<b<\sigma(1).
\end{cases}\nonumber
\end{align}

Coming from Proposition \ref{variacionparatipo1}, (\ref{simplificada1}) is asserted only when $\sigma(1)=1$. But the latter restriction is unnecessary since we deal with the $\mathbb{Z}_m$-classes in (\ref{coset}). Nevertheless, in order to have full control of classes in $\Sigma_m/\mathbb{Z}_m$, it is convenient to rewrite the relations above in terms canonical representatives. For instance, (\ref{simplificada1}) becomes
$$
\sum_{i=1}^{m}\theta\langle c_1^{\sigma^{-1}(i)-1}\sigma c_1^{-i+1}\rangle\;=\;0
$$
for $\sigma(1)=1$, and more usefully for our purposes
{\scriptsize\begin{equation}\label{canonica2}
\sum_{i=1}^{\sigma(1)-1}\theta\langle c_1^{\sigma^{-1}(i)-1}\sigma c_1^{-i+1}\rangle\;=\;
\begin{cases}
\displaystyle\sum_{i=1}^{\sigma(1)-1}\theta\langle c_1^{\sigma^{-1}(i)-1}\sigma c_b^{-1} c_1^{-i+1}\rangle, & \text{when }1<\sigma(1)<b, \\
\displaystyle\rule{0mm}{9mm}\sum_{i=1}^{b-1}\theta\langle c_1^{\sigma^{-1}(i)-1}\sigma c_b^{-1} c_1^{-i+1}\rangle+\sum_{i=b}^{m-1}\theta\langle c_1^{\sigma^{-1}(i+1)-1}\sigma c_b^{-1} c_1^{-i+1}\rangle, & \text{when }1<\sigma(1)=b,\\
\displaystyle\rule{0mm}{9mm}\sum_{i=1}^{b-1}\theta\langle c_1^{\sigma^{-1}(i)-1}\sigma c_b^{-1} c_1^{-i+1}\rangle+\sum_{i=b}^{\sigma(1)-2}\theta\langle c_1^{\sigma^{-1}(i+1)-1}\sigma c_b^{-1} c_1^{-i+1}\rangle, & \text{when }1<b<\sigma(1).
\end{cases}
\end{equation}}

Cases having $\sigma(1)=2<b<m$ in the first line of (\ref{canonica2}), together cases having $b=2$ and $\sigma(1)=3$ in the third line of (\ref{canonica2}), lead to a full description of $\theta$.

\begin{proposition}\label{theta}
A base element $[\sigma\cdot\mathcal{O}_1]\in\pi_1(F_1/\mathbb{Z}_m)$ with canonical representative $\sigma\cdot\mathcal{O}_1$ (thus $\sigma(1)=1$) has $\theta$ image given by
$$\theta\langle\sigma\rangle=\left(\sigma^{-1}(2)-1\right)\,\theta\left([\mathcal{O}_1]\rule{0mm}{4mm}\right).$$ In particular $[\mathcal{O}_1]\in\pi_1(F_m/\mathbb{Z}_m)$ is forced to map under $\theta$ to a generator of $\mathbb{Z}_m$.
\end{proposition}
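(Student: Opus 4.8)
The plan is to show that $\theta$ is completely determined by its values on the type-$1$ canonical generators, and then to solve the relations (\ref{canonica2}) for those values. By (\ref{tetaapartirde2}) every type-$b$ generator with $b\ge2$ is killed by $\theta$, so all the information is carried by the numbers $\theta\langle\sigma\rangle=\theta[\sigma\cdot\mathcal{O}_1]$ with $\sigma(1)=1$. Since the inclusion $\pi_1(F_m)\hookrightarrow\pi_1(F_m/\mathbb{Z}_m)$ has image $\ker\theta$, applying $\theta$ to Propositions \ref{variacionparatipo1}--\ref{variacionparatipo4} produces exactly the vanishing relations encoded in (\ref{canonica2}). I would extract two especially transparent families of these relations: the first line of (\ref{canonica2}) with $\sigma(1)=2<b<m$, and the third line with $b=2$ and $\sigma(1)=3$.

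When $\sigma(1)=2$ the index $i$ in the first line of (\ref{canonica2}) runs only over $i=1$, so both sums collapse to a single term. A short canonicalization check (using the convention (\ref{firstsecond}) and that $c_b$ fixes $1$ and $2$ for $b>2$) shows that the two surviving terms are the $\theta$-values of the canonical representatives of $[\sigma]$ and of $[\sigma c_b^{-1}]$, and that these two representatives have the same image under the map $\pi\mapsto\pi^{-1}(2)$. Hence $\theta\langle\sigma\rangle$ is unchanged when $\sigma$ is right-multiplied by any $c_b$ with $3\le b\le m-1$. As $b$ varies these multiplications generate the symmetric group on $\{3,\ldots,m\}$, whose right action fixes $\sigma^{-1}(1)$ and $\sigma^{-1}(2)$ while permuting the remaining positions; matching the orbit size $(m-2)!$ against the number of canonical representatives with a prescribed value of $\pi^{-1}(2)$ shows the orbits are precisely the level sets of $\pi\mapsto\pi^{-1}(2)$. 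I would conclude that $\theta\langle\sigma\rangle$ depends only on $v:=\sigma^{-1}(2)\in\{2,\ldots,m\}$, say $\theta\langle\sigma\rangle=f(v)$, with $f(2)=\theta([\mathcal{O}_1])$ because the identity permutation represents $[\mathcal{O}_1]$.

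With $b=2$ and $\sigma(1)=3$ the third line of (\ref{canonica2}) also degenerates: the left sum has the two terms $i=1,2$, the first right sum has only $i=1$, and the second right sum is empty. Canonicalizing the three surviving arguments as above and rewriting them through $f$ turns the relation into $f(v_1)+f(v_2)=f(v_3)$, where a direct computation of the three inverse-images of $2$ gives $v_1+v_2-2\equiv v_3-1 \pmod m$. Reading these relations as $\sigma$ ranges over the admissible permutations yields the recursion $f(v)=f(v-1)+f(2)$ with indices mod $m$, so induction gives $f(v)=(v-1)\,\theta([\mathcal{O}_1])$; this value is well defined modulo $m$ precisely because $m\cdot\theta([\mathcal{O}_1])=0$ in $\mathbb{Z}_m$. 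This is the asserted formula. Finally, every generator of $\pi_1(F_m/\mathbb{Z}_m)$ has $\theta$-image in the cyclic subgroup generated by $\theta([\mathcal{O}_1])$ (type $\ge2$ by (\ref{tetaapartirde2}), type $1$ by the formula), whereas $\theta$ is onto since $F_m\to F_m/\mathbb{Z}_m$ is a connected $m$-fold cover; hence $\theta([\mathcal{O}_1])$ must generate $\mathbb{Z}_m$.

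The main obstacle is the bookkeeping in the two collapses: one must pin down the canonical representatives of $[\sigma]$, $[\sigma c_b^{-1}]$, $[\sigma c_1^{-1}]$ and $[\sigma c_2^{-1}]$ and compute their values of $\pi^{-1}(2)$ correctly under the product convention (\ref{firstsecond}), and one must verify that the first family really sweeps out all level sets of $\pi\mapsto\pi^{-1}(2)$ and that the second family supplies, for each $v$, an instance realizing $f(v)=f(v-1)+f(2)$. These are finite combinatorial verifications, but they are where all the content sits; everything else is formal.
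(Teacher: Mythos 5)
Your proposal is correct and follows essentially the same route as the paper's own proof: reduce to type-1 generators via (\ref{tetaapartirde2}), use the first line of (\ref{canonica2}) (with $\sigma(1)=2<b$) to show that $\theta\langle\sigma\rangle$ depends only on $\sigma^{-1}(2)$, then use the third line (with $b=2$, $\sigma(1)=3$) to extract the recursion $\theta_i=\theta_{i-1}+\theta_2$ and induct. The key identities you state (invariance of $\theta\langle\sigma\rangle$ under right multiplication by $c_b$ for $b>2$, and the congruence $v_1+v_2-2\equiv v_3-1 \pmod m$) match the paper's computations exactly; your orbit-size count and the covering-space argument for the final ``generator'' assertion are just slightly more explicit versions of steps the paper leaves implicit.
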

\begin{proof}
For $\sigma(1)=2<b\leq m$, the first line of (\ref{canonica2}) gives
$\theta\langle c_1^{\sigma^{-1}(1)-1}\sigma\rangle=
\theta\langle c_1^{\sigma^{-1}(1)-1}\sigma c_b^{-1}\rangle$
which, in terms of the bijection
$$
\Sigma_{m-1}^{(2)}:=\{\tau\in\Sigma_m \ | \ \tau(1)=2\}\stackrel{\phi}{\longrightarrow}
\{\tau\in\Sigma_m \ | \ \tau(1)=1\}=:\Sigma_{m-1}^{(1)}$$
given by $\phi(\sigma)=c_1^{\sigma^{-1}(1)-1}\sigma$, simplifies to $\theta\langle\sigma\rangle=\theta\langle\sigma c_b^{-1}\rangle$ for $\sigma\in\Sigma_{m-1}^{(1)}$ and $2<b\leq m$. Thus, $\theta\langle\sigma\rangle=\theta\langle\tau\rangle$
for $\sigma,\tau\in\Sigma_{m-1}^{(1)}$ whenever $\sigma^{-1}\tau$ lies in the subgroup generated by $c_3,\ldots, c_m$. In other words, for $\sigma\in\Sigma_{m-1}^{(1)}$, the value of $\theta\langle\sigma\rangle$ depends only on $\sigma^{-1}(2)\in\{2,3,\ldots,m\}$. Therefore, by setting $\theta_{\sigma^{-1}(2)}:=\theta\langle\sigma\rangle$, we define without ambiguity elements $\theta_2,\theta_3,\ldots,\theta_m\in\mathbb{Z}_m$. In these terms, our goal transforms into establishing the relation
\begin{equation}\label{objetivo}
\theta_i=(i-1)\theta_2, \text{ \ for }i\in\{2,3,\ldots,m\}.
\end{equation}
Now take $b=2$ and $\sigma\in\Sigma_m$ with $\sigma(1)=3$ in the third line of (\ref{canonica2}) to get
\begin{equation}\label{lasegunda}
\theta\langle c_1^{\sigma^{-1}(1)-1}\sigma c_2^{-1}\rangle=
\theta\langle c_1^{\sigma^{-1}(1)-1}\sigma\rangle+\theta\langle c_1^{\sigma^{-1}(2)-1}\sigma c_1^{-1}\rangle.
\end{equation}
The formula for $\phi$ in the previous paragraph also sets a bijection
$$
\Sigma_{m-1}^{(3)}:=\{\tau\in\Sigma_m \ | \ \tau(1)=3\}\stackrel{\phi}{\longrightarrow}
\{\tau\in\Sigma_m \ | \ \tau(1)=1\}=\Sigma_{m-1}^{(1)},$$
which allows us to write (\ref{lasegunda}) as 
$\theta\langle\tau c_2^{-1}\rangle=
\theta\langle\tau \rangle+
\theta\langle c_1^{\sigma^{-1}(2)-\sigma^{-1}(1)}\tau c_1^{-1}\rangle,\text{ \  for  } \tau\in\Sigma_{m-1}^{(1)}$.
In such an expression, $c_1^{\sigma^{-1}(2)-\sigma^{-1}(1)}\tau c_1^{-1}$ lies in $\Sigma_{m-1}^{(1)}$, which forces $\sigma^{-1}(2)-\sigma^{-1}(1)=\tau^{-1}(2)-1$. We thus get for $\tau\in\Sigma_{m-1}^{(1)}$ the relation
$\theta\langle\tau c_2^{-1}\rangle=
\theta\langle\tau \rangle+
\theta\langle c_1^{\tau^{-1}(2)-1}\tau c_1^{-1}\rangle$
or, in terms of the ``$\theta_i$''-notation, $$\theta_{\tau^{-1}(3)}=\theta_{\tau^{-1}(2)}+\theta_{\overline{\tau^{-1}(3)-\tau^{-1}(2)+1}},$$ where $\overline{\tau^{-1}(3)-\tau^{-1}(2)+1}$ stands for the integer in $\{2,3,\ldots,m\}$ congruent with $\tau^{-1}(3)-\tau^{-1}(2)+1$ mod $m$. In particular, $\theta_i=\theta_{i-1}+\theta_2$ for $3\leq i\leq m$, which leads to (\ref{objetivo}).
\end{proof}

\subsection{Proof of Theorem \ref{maintheoremH}}
Describing the map $p_1\colon\pi_1(F_m)\to \pi_1(G)$ in (\ref{maps}) is an easy task. First of all, by collapsing the tree $T$ of $G$, we see that a generator of $\pi_1(G)=\mathbb{Z}$ is given by (the canonical loop associated to) the edge $a$. On the other hand, the construction of Farley-Sabalka's field makes it clear that the only coordinate of dimension 1 in a collapsible edge in $F_m$ lies completely in $T$. Furthermore, no critical edge selected in $F_m$ to connect the maximal forest $\mathcal{F}_m$ can have the edge $a$ as first coordinate (cf.~paragraph following Remark \ref{recovering}). It follows that any of the joining paths in Remark \ref{choosing} has image under the canonical projection $F_m\to G$ entirely inside $T$, so that the $p_1$-image of a generator $x=(x_1,\ldots,x_m)$ of $\pi_1(F_m)$ in Corollary \ref{bases} depends exclusively on $x_1$. We thus get:

\begin{proposition}\label{proyeccion1}
The morphism $p_1$ in (\ref{maps}) sends a generator $x=(x_1,\ldots,x_m)$ of $\pi_1(F_m)$ in Corollary \ref{bases} into (the class of) $x_1$. Explicitly, $p_1(x)$ is the generator $a$ provided $x_1=a$; otherwise $p_1(x)$ vanishes.
\end{proposition}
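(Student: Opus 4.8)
The plan is to follow the canonical loop representing a generator $x=(x_1,\ldots,x_m)$ under the geometric map $\pi\colon F_m\to G$ that projects a configuration onto the coordinate indexed by the neutral element (i.e.\ the first coordinate), which realizes $\text{ev}_e$ and induces $p_1$ on fundamental groups. Since $\pi_1(G)=\mathbb{Z}$ arises by collapsing the maximal tree $T$ and is generated by the loop of the special edge $a$, it suffices to decide, edge by edge, whether the $\pi$-image of the representing loop runs along $a$ or stays inside $T$. Equivalently, because the only edge of $G$ outside $T$ is $a$, the $\pi$-image of an edge of $F_m$ lands in $T$ exactly when its first coordinate is not the edge $a$.

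First I would recall from Remark \ref{choosing} that $x$ is represented by the concatenation of a joining path in the maximal tree $\mathcal{T}_m$ from the base point $\star$ to the source of $x$, the edge $x$ itself, and a joining path in $\mathcal{T}_m$ from the target of $x$ back to $\star$. Since $\mathcal{T}_m$ is obtained from the forest $\mathcal{F}_m$ by adjoining the selected critical edges (Proposition \ref{contractilextension}), every edge of the two joining portions is either a collapsible edge of $\mathcal{F}_m$ or a selected critical edge.

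The key step is to show that $\pi$ carries each such joining edge into $T$, i.e.\ that none of them has $a$ as its first coordinate. For a collapsible edge this is immediate, because its unique one-dimensional coordinate is some $a_i\in T$, and $a$, which carries ordinal $\infty$, is never a coordinate of a collapsible edge; hence its first coordinate is either a vertex or an edge of $T$. For a selected critical edge $x=\sigma\cdot\mathcal{O}_b$ I would invoke the bookkeeping recorded just before the statement (cf.\ Remark \ref{recovering}): since $a$ occupies slot $b$ of $\mathcal{O}_b$, the first coordinate equals $a$ precisely when $\sigma(1)=b$, whereas the selection condition forces $\sigma(1)=1\neq b$ for $b\geq2$ and $\sigma(1)\neq1=b$ for $b=1$. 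Thus no joining edge projects onto $a$, and all of them land in $T$.

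Granting this, the two joining portions of the representing loop become null-homotopic after applying $\pi$, so $p_1(x)$ is the class of the $\pi$-image of the single edge $x$, namely the class of its first coordinate $x_1$ in $\pi_1(G)$. If $x_1=a$ this class is the chosen generator, while if $x_1\neq a$ then $x_1$ lies in $T$ and the class is trivial, which is exactly the asserted formula. The only genuinely delicate point is the selected-critical-edge verification, which is a purely combinatorial comparison between the position of $a$ inside $\mathcal{O}_b$ and the selection condition on $\sigma$; once that is in place, everything else is formal.
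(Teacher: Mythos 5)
Your proof is correct and takes essentially the same approach as the paper's: both arguments reduce the computation to the observation that the joining paths in Remark \ref{choosing} consist of collapsible edges (whose unique edge coordinate is some $a_i\subset T$) and selected critical edges (which never have $a$ as first coordinate), so their images under the first-coordinate projection lie entirely in the contractible tree $T$, leaving only the middle edge $x$ to contribute. Your explicit combinatorial check that a selected critical edge $\sigma\cdot\mathcal{O}_b$ has first coordinate $a$ only if $\sigma(1)=b$, which the selection conditions rule out for every $b$, is exactly the fact the paper invokes by citing the paragraph following Remark \ref{recovering}.
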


The proof of Theorem \ref{maintheoremH} is based on Corollary \ref{th:borsuk_braidIcoro}. Namely, for an arbitrary fixed integer $k$, we show that the ``multiplication-by-$k$'' morphism $\mathbb{Z}=\pi_1(\Gamma)\to\pi_1(G)=\mathbb{Z}$ fits into a commutative diagram
	\begin{equation}\label{final}
	\xymatrix{
 & \pi_1(G) \\
\pi_1(\Gamma) \ar[r]^{\varphi} \ar[ur]^{k} \ar@{^{(}->}[d] & \pi_1(F_m) \ar[u]_{p_1} \ar@{^{(}->}[d]^{\iota}\\
\pi_1(\Gamma/\mathbb{Z}_m) \ar[r]^{\hspace{-1.3mm}\psi} \ar@{>>}[d]_{\theta'} & \pi_1(F_m/\mathbb{Z}_m) \ar@{>>}[d]^{\theta}\\
\mathbb{Z}_m \ar[r]_{=} & \mathbb{Z}_m.
}\end{equation}
for suitably defined morphisms $\varphi$ and $\psi$. 

\smallskip
Since $0=\chi(\Gamma)=m\cdot \chi(\Gamma/\mathbb{Z}_m)$, the graph $\Gamma/\mathbb{Z}_m$ has the homotopy type of the circle, so $\pi_1(\Gamma/\mathbb{Z}_m)=\mathbb{Z}$, and the group inclusion on the left hand-side of (\ref{final}) is given by multiplication by $m$. By Proposition \ref{theta}, a generator $g\in\pi_1(\Gamma/\mathbb{Z}_m)$ maps under $\theta'$ into
$\theta([\mathcal{O}_1])\in\mathbb{Z}_m$. Let $g'\in\pi_1(\Gamma)$ be the generator whose image in $\pi_1(\Gamma/\mathbb{Z}_m)$ is $m\cdot g$. Lastly, consider the base elements
$$\mbox{$z:=[\mathcal{O}_1]\in\pi_1(F_m/\mathbb{Z}_m)$ \ and \ $w_i:=c_1^{-i}\cdot\mathcal{O}_m\in\pi_1(F_m)$ \hspace{.3mm}for $1\leq i\leq m$.}$$

\smallskip
Given an integer $\ell$, the rule $g\mapsto z\cdot\iota(w_1^\ell)$ defines a morphism $\psi\colon\pi_1(\Gamma/\mathbb{Z}_m)\to\pi_1(F_m/\mathbb{Z}_m)$ rendering a commutative bottom square in (\ref{final}). By exactness of the vertical columns in (\ref{final}), the restriction of $\psi$ determines a morphism $\varphi\colon\pi_1(\Gamma)\to \pi_1(F_m)$ that renders a commutative middle square in (\ref{final}). It thus suffices to show that a suitable choice of $\ell$ renders a commutative top triangle in (\ref{final}). With this in mind, note that, by construction, $\iota\circ\varphi(g')=(z \cdot\iota(w_1^\ell))^m$. Using Corollary \ref{auxi} and neglecting the use of $\iota$, we then get
\begin{align}
(z w_1^\ell)^m&=(z w_1^\ell z^{-1}) \cdot (z^2 w_1^\ell z^{-2}) \cdots (z^{m-2} w_1^\ell z^{2-m})\cdot (z^{m-1} w_1^\ell z^{-m+1}) \cdot z^m \cdot w_1^\ell \nonumber\\
&=(z w_1 z^{-1})^\ell \cdot (z^2 w_1 z^{-2})^\ell \cdots (z^{m-2} w_1 z^{2-m})^\ell\cdot (z^{m-1} w_1 z^{-m+1})^\ell \cdot z^m \cdot w_1^\ell \nonumber\\
&=(c_1^{m-2}\cdot\mathcal{O}_m)^\ell\cdot
(c_1^{m-3}\cdot\mathcal{O}_m)^\ell\cdots
(c_1\cdot\mathcal{O}_m)^\ell\cdot
(\mathcal{O}_m)^\ell \cdot z^m \cdot (c_1^{m-1}\cdot\mathcal{O}_m)^\ell.\label{ellbases}
\end{align}
Each $c_1^i\cdot\mathcal{O}_m$ ($0\leq i<m$) in (\ref{ellbases}) lies in $\pi_1(F_m)$ and, in view of Proposition \ref{proyeccion1}, maps trivially under $p_1$, except for one of them which maps to the generator $a$. Therefore the $p_1$ image of (\ref{ellbases}) is $\ell a + p_1(z^m)$. The commutativity of the top triangle in (\ref{final}) is then assured by taking $\ell=k-j$ where $p_1(z^m)=ja$.

\begin{remark}Let $(X,\tau_H)$ be a pair satisfying the hypotheses of Theorem \ref{th:borsuk_braidI1}, with $H=\mathbb{Z}_n$. Based on Theorem \ref{th:borsuk_braidI1}, it is straightforward to verify that the following more general statements hold:
\begin{itemize}
\item[a)] Theorem \ref{thirdmainintro} replacing $(\Gamma, \tau_{\mathbb{Z}_n})$ by $(X,\tau_{\mathbb{Z}_n})$;
\item[b)] Theorem \ref{secondmainH} replacing $(\Gamma, \tau_{\mathbb{Z}_n})$ by $(X,\tau_{\mathbb{Z}_n})$ with $\pi_1(X)$  a free group of rank $1+mn$;
\item[c)] Theorem \ref{maintheoremH} replacing $(\Gamma, \tau_{\mathbb{Z}_n})$ by $(X,\tau_{\mathbb{Z}_n})$ with  $\pi_1(X)$ isomorphic  to $\mathbb{Z}$.
\end{itemize}
\end{remark}

{\sc \ 

Departamento de Matem\'atica, IME

Universidade de S\~ao Paulo

Rua do Mat\~ao 1010 CEP: 05508-090

S\~ao Paulo-SP, Brazil

{\tt dlgoncal@ime.usp.br}

\bigskip

Departamento de Matem\'aticas

Centro de Investigaci\'on y de Estudios Avanzados del I.P.N.

Av.~Instituto Polit\'ecnico Nacional n\'umero~2508, San Pedro Zacatenco

M\'exico City 07000, M\'exico.}

{\tt jesus@math.cinvestav.mx}

\begin{thebibliography}{10}

\bibitem{MR2701024}
Aaron~David Abrams.
\newblock {\em Configuration spaces and braid groups of graphs}.
\newblock ProQuest LLC, Ann Arbor, MI, 2000.
\newblock Thesis (Ph.D.)--University of California, Berkeley.

\bibitem{jorgetereyo}
Jorge Aguilar-Guzm\'{a}n, Jes\'{u}s Gonz\'{a}lez, and Teresa Hoekstra-Mendoza.
\newblock Farley-{S}abalka's {M}orse-theory model and the higher topological complexity of ordered configuration spaces on trees.
\newblock {\em Discrete Comput. Geom.}, 67(1):258--286, 2022.

\bibitem{MR3614297}
Alexandre~Paiva Barreto, Daciberg~Lima Gon\c{c}alves, and Daniel Vendr\'{u}scolo.
\newblock Free involutions on torus semi-bundles and the {B}orsuk-{U}lam theorem for maps into {$\mathbb{R}^n$}.
\newblock {\em Hiroshima Math. J.}, 46(3):255--270, 2016.

\bibitem{BGH1}
Anne Bauval, D.~L. Gon\c{c}alves, and Claude Hayat.
\newblock The {B}orsuk-{U}lam theorem for the seifert manifolds having flat geometry.
\newblock In {\em arXiv:1807.00159.} 2018.

\bibitem{MRBGH}
Anne Bauval, D.~L. Gon\c{c}alves, and Claude Hayat.
\newblock The {B}orsuk-{U}lam theorem for closed 3-dimensional manifolds having nil geometry.
\newblock In {\em arXiv:2001.06368}. 2020.

\bibitem{BauHaGoZv}
Anne Bauval, D.~L. Gon\c{c}alves, Claude Hayat, and Peter Zvengrowski.
\newblock The {B}orsuk-{U}lam theorem for double coverings of {S}eifert manifolds'.
\newblock In {\em Proceedings of the Institute of Mathematics of the National Academy of Sciences of Ukraine}, number~6, pages 165--189. 2013.

\bibitem{MR3619753}
Anne Bauval and Claude Hayat.
\newblock L'anneau de cohomologie des vari\'{e}t\'{e}s de {S}eifert non-orientables.
\newblock {\em Osaka J. Math.}, 54(1):157--195, 2017.

\bibitem{MR4431413}
Christian Blanchet and Chahrazade Matmat.
\newblock The {B}orsuk-{U}lam theorem for 3-manifolds.
\newblock {\em Quaest. Math.}, 45(5):667--687, 2022.

\bibitem{MR2171804}
Daniel Farley and Lucas Sabalka.
\newblock Discrete {M}orse theory and graph braid groups.
\newblock {\em Algebr. Geom. Topol.}, 5:1075--1109, 2005.

\bibitem{MR1358614}
Robin Forman.
\newblock A discrete {M}orse theory for cell complexes.
\newblock In {\em Geometry, topology, \& physics}, Conf. Proc. Lecture Notes
  Geom. Topology, IV, pages 112--125. Int. Press, Cambridge, MA, 1995.

\bibitem{MR1926850}
Robin Forman.
\newblock Discrete {M}orse theory and the cohomology ring.
\newblock {\em Trans. Amer. Math. Soc.}, 354(12):5063--5085, 2002.

\bibitem{MR2209795}
Daciberg~L. Gon\c{c}alves.
\newblock The {B}orsuk-{U}lam theorem for surfaces.
\newblock {\em Quaest. Math.}, 29(1):117--123, 2006.

\bibitem{GG}
Daciberg~L. Gon\c{c}alves and   Jes\'us Gonz\'alez.
\newblock The {B}orsuk-{U}lam theorem for graphs. To appear in Kyushu Journal of Mathematics.
\newblock {Available from \em https://doi.org/10.48550/arXiv.2211.05279.}

\bibitem{MR2840097}
Daciberg~L. Gon\c{c}alves, Claude Hayat, and Peter Zvengrowski.
\newblock The {B}orsuk-{U}lam theorem for manifolds, with applications to
  dimensions two and three.
\newblock In {\em Group actions and homogeneous spaces}, pages 9--28. Fak. Mat. Fyziky Inform. Univ. Komensk\'{e}ho, Bratislava, 2010.

\bibitem{MR3993193}
Daciberg~Lima Gon\c{c}alves and Anderson Pai\~{a}o dos Santos.
\newblock Diagonal involutions and the {B}orsuk-{U}lam property for product of surfaces.
\newblock {\em Bull. Braz. Math. Soc. (N.S.)}, 50(3):771--786, 2019.

\bibitem{MR2639841}
Daciberg~Lima Gon\c{c}alves and John Guaschi.
\newblock The {B}orsuk-{U}lam theorem for maps into a surface.
\newblock {\em Topology Appl.}, 157(10-11):1742--1759, 2010.

\bibitem{MR3947929}
Daciberg~Lima Gon\c{c}alves, John Guaschi, and Vinicius~Casteluber Laass.
\newblock The {B}orsuk-{U}lam property for homotopy classes of self-maps of surfaces of {E}uler characteristic zero.
\newblock {\em J. Fixed Point Theory Appl.}, 21(2):Paper No. 65, 29, 2019.

\bibitem{MR4235703}
Daciberg~Lima Gon\c{c}alves, John Guaschi, and Vinicius~Casteluber Laass.
\newblock The {B}orsuk-{U}lam property for homotopy classes of maps from the
  torus to the {K}lein bottle.
\newblock {\em Topol. Methods Nonlinear Anal.}, 56(2):529--558, 2020.

\bibitem{MR01}
Daciberg~Lima Gon\c{c}alves, John Guaschi, and Vinicius~Casteluber Laass.
\newblock The {B}orsuk-{U}lam property for homotopy classes of maps from the
  torus to the klein bottle.~II.
  \newblock{\em Topol. Methods Nonlinear Anal.} 60(2): 491--516, 2022.
   
\bibitem{Acta}
Daciberg~Lima Gon\c{c}alves, John Guaschi, and Vinicius~Casteluber Laass.
\newblock Free Cyclic Actions on Surfaces and the Borsuk--Ulam Theorem
\newblock {\em   Acta Mathematica Sinica, English Series},  Oct., Vol. 38, No. 10, pp. 1803--1822, 2022. 
    
\bibitem{MR2833585}
Jee~Hyoun Kim, Ki~Hyoung Ko, and Hyo~Won Park.
\newblock Graph braid groups and right-angled {A}rtin groups.
\newblock {\em Trans. Amer. Math. Soc.}, 364(1):309--360, 2012.

\bibitem{We}  Criag Westerland.
\newblock Configuration spaces in topology and geometry.
\newblock{\em Aust. Math. Soc. Gaz.} 38, No. 5, 279--283, 2011.

\bibitem{White}
Whitehead, G.
\newblock Elements of Homotopy theory.
\newblock {\em Springer.}, (1978).


\end{thebibliography}
\end{document}